\crefname{equation}{}{}% {<type>}{<singular>}{<plural>}
\def\rhob{\boldsymbol{\rho}}
\def\etab{\boldsymbol{\eta}}
\def\Yb{\boldsymbol{\cal Y}}
\def\bcW{\boldsymbol{\cal W}}
\def\p{\mathrm{p}}
\def\T{\mathrm{T}}
\def\s{\mathrm{s}}
\def \LOCALFIGPATHRP {./IMAGES/}
\def \LOCALFIGPATHLOBB {./IMAGES/}
\def \LOCALFIGPATHCONE {./IMAGES/}
\begin{document}

\title{Energy relaxation approximation for the compressible multicomponent flows in thermal nonequilibrium%\thanks{Grants or other notes
%about the article that should go on the front page should be
%placed here. General acknowledgments should be placed at the end of the article.}
}
%\subtitle{Do you have a subtitle?\\ If so, write it here}

\titlerunning{FV methods for nonequilibrium multicomponent flows}        % if too long for running head

\author{Claude Marmignon \and Fabio Naddei \and Florent Renac}

\authorrunning{Marmignon, Naddei, Renac} % if too long for running head

\institute{C. Marmignon \at DAAA, ONERA, Universit\'e Paris Saclay, F-92322 Ch\^atillon, France
 \and      F. Naddei \at DAAA, ONERA, Universit\'e Paris Saclay, F-92322 Ch\^atillon, France
 \and      F. Renac \at DAAA, ONERA, Universit\'e Paris Saclay, F-92322 Ch\^atillon, France \\ \email{florent.renac@onera.fr}
}

\date{Received: date / Accepted: date}
% The correct dates will be entered by the editor

\maketitle

\begin{abstract}
This work concerns the numerical approximation with a finite volume method of inviscid, nonequilibrium, high-temperature flows in multiple space dimensions. It is devoted to the analysis of the numerical scheme for the approximation of the hyperbolic system in homogeneous form. We derive a general framework for the design of numerical schemes for this model from numerical schemes for the monocomponent compressible Euler equations for a polytropic gas. Under a very simple condition on the adiabatic exponent of the polytropic gas, the scheme for the multicomponent system enjoys the same properties as the one for the monocomponent system: discrete entropy inequality, positivity of the partial densities and internal energies, discrete maximum principle on the mass fractions, and discrete minimum principle on the entropy. Our approach extends the relaxation of energy [Coquel and Perthame, \textit{SIAM J. Numer. Anal.}, 35 (1998), 2223--2249] to the multicomponent Euler system. In the limit of instantaneous relaxation we show that the solution formally converges to a unique and stable equilibrium solution to the multicomponent Euler equations. We then use this framework to design numerical schemes from three schemes for the polytropic Euler system: the Godunov exact Riemann solver [Godunov, Math. Sbornik, 47 (1959), 271--306] and the HLL [Harten et al., SIAM Rev., 25 (1983), 35--61] and pressure relaxation based [Bouchut, Nonlinear stability of finite volume methods for hyperbolic conservation laws and well-balanced schemes for sources, Frontiers in Mathematics, Birkh\"auser, 2004] approximate Riemann solvers. Numerical experiments in one and two space dimensions on flows with discontinuous solutions support the conclusions of our analysis and highlight stability, robustness and convergence of the scheme.

\keywords{Compressible multicomponent flows \and thermal nonequilibrium \and relaxation of hyperbolic systems \and finite volume method \and relaxation scheme}
%\PACS{PACS code1 \and PACS code2 \and more}
\subclass{65M12 \and 65M70 \and 76T10}
\end{abstract}

%\linenumbers

%
%%%
%
\section{Introduction}

We are here interested in finite volume methods to simulate inviscid hypersonic high-temperature flows. Such simulations are of strong significance in many applications (e.g., hypersonic air vehicles \cite{KNIGHT20128}, reentry vehicles \cite{anderson_89}, meteoroid entry into atmosphere \cite{henneton_etal_15}) and scientific topics (e.g., weakly ionized gases, heat transfer \cite{PRAKASH20118474}, boundary layer stability \cite{Knisely_Zhong19}, shock propagation \cite{Honma_Glass_84}) related to hypersonic flows. For such flows, effects of thermal and chemical nonequilibrium are important and cannot be modeled by the monocomponent compressible Euler equations for a polytropic gas. Real gas models usually include multiple temperatures, chemical reaction rates and vibrational relaxation effects \cite{Giovangigli_book_99,anderson_89}. We here focus on issues related to the numerical treatment of the convective fluxes due to their hyperbolic nature and on the capture of associated features such as strong shocks. We therefore consider thermal nonequilibrium only and neglect chemical nonequilibrium and relaxation of vibration energies that are associated to numerical issues of different nature.

The numerical analysis of hypersonic flows is usually challenging because the characteristic time scales of the chemical reactions and molecular vibrations may be quite different from the characteristic time scale of the flow field. Taking into account the variations in the chemical composition and internal energy modes of a fluid requires to resolve the mass fractions and vibration energies. The thermodynamic properties then depend on these variables which complicates the design of numerical schemes with sought-after properties such as robustness (i.e., that keeps positivity of partial densities and internal and vibration energies), stability from a discrete entropy inequality, maximum principle on the mass fractions, etc. 

The design of numerical schemes for the approximation of the compressible multicomponent Euler equations has been an active field of research over the past decades. 
Park proposed  an implicit time marching associated to central differencing of ionized flows \cite{park85}, while finite volume discretizations have been widely developed with flux splitting techniques \cite{COLELLA_glaz_1985,candler_maccormack_85,liu_vinokur_83,liou_etal_split_RG_90}, Jacobian based methods such as the Roe method \cite{coquel_marmignon_ionized_95,GLAISTER1988361}, the AUSM scheme \cite{gaitonde_12}, relaxation based approximate Riemann solvers (ARS) \cite{soubrie_ionised_2012}, etc. High-order extensions have been proposed with the second-order MUSCL method \cite{Druguet_et_al_05}, ENO and WENO reconstructions \cite{TON1996,DRIKAKIS2003405}, interface capturing schemes \cite{abgrall_96,karni_mutlicomp_96}. Shock fitting techniques have also been addressed in \cite{PRAKASH20118474}. In this work we will consider the design of finite volume schemes based on ARS.

To ensure entropy stability and robustness when using ARS such as the HLL \cite{hll_83}, Roe \cite{Roe_1981}, Rusanov \cite{Rusanov1961}, relaxation \cite{bouchut_04,coquel_etal_relax_fl_sys_12} schemes, etc., one needs an estimation from above of the maximum wave speeds in the Riemann problem. However, fast estimates such as the two-rarefaction approximation \cite[Ch.~9]{toro_book}, the iterative algorithm from \cite{guermond_popov_16}, or the one based on eigenvalues of the Roe linearisation \cite{einfeldt_etal_91} will require time-consuming Newton-Raphson iterations when the equation of state (EOS) differs in the left and right states due to different species compositions. In \cite{dellacherie_03} a relaxation technique is applied to the multicomponent Euler system which allows the use of monocomponent schemes for each component and  associated EOS and the scheme inherits properties from the monocomponent scheme. However, this technique requires to compute as many monocomponent schemes as there are species which can become time consuming. Moreover, the entropy of the relaxation system is proved to be convex for constant mass fractions only which is valid for isolated shocks, but fails for interactions of shocks with material interfaces. Here we consider the energy relaxation technique introduced in \cite{coquel_perthame_98} for the approximation of the monocomponent compressible Euler equations with a general EOS. In this method, one considers a decomposition of the internal energy including the energy for a polytropic gas thus relaxing the general EOS. The method then allows the design of numerical schemes by using classical numerical fluxes for polytropic gases coupled to instantaneous relaxation of the energy.

In this work, we extend this method to our model and show how to define a numerical scheme from a scheme for the polytropic gas dynamics through a simple formula (equation \cref{eq:flux_h_from_H}) which corresponds to a splitting of hyperbolic and relaxation operators. In the limit of instantaneous relaxation we show that the solution of the energy relaxation approximation formally converges to a unique and stable equilibrium solution to the multicomponent Euler equations which justify the splitting. By defining the adiabatic exponent of the polytropic gas as an upper bound of the possible values of adiabatic exponent of the mixture, the scheme for the multicomponent system inherits the properties of the scheme for the monocomponent system: discrete entropy inequality, positivity of the partial densities and internal energies, discrete maximum principle on the mass fractions, and discrete minimum principle on the entropy. An attempt to apply the energy relaxation approximation to the multicomponent Euler system for a fluid mixture in thermal equilibrium has been made in \cite{renac2020entropy}. However, this work did not provide a general framework to build numerical schemes. The closure laws for the fluid mixture indeed prevent the derivation of a strictly convex entropy for the relaxation system which in turn prevents to apply stability theorems \cite{chen_levermore_liu94} to the relaxation process. As a consequence the well-posedness of the instantaneous relaxation process has not been investigated either. On the other hand, the present work successfully addresses this property and may use any polytropic three-point scheme.

%The novelty of the present work is hence the derivation of a general framework to design numerical schemes.

%\red{TODO: dire que ca peut servir de building block pour d'autres schémas (cons. FD, HO VF, etc;)} 

The paper is organized as follows. \Cref{sec:HRM_model} presents the multicomponent compressible Euler system in thermal nonequilibrium and the entropy pair. The unstructured finite volume scheme and the three-point scheme are described in \cref{sec:FV-schemes}. We introduce and analyze the relaxation in energy approximation in \cref{sec:relax_flux} that we use in \cref{sec:ex-ES-3pt-schemes} to derive three numerical fluxes for the finite volume scheme. These three schemes are then assessed by numerical experiments in one and two space dimensions in \cref{sec:num_xp} and concluding remarks about this work are given in \cref{sec:conclusions}.

%
%%%
%
\section{Model problem}\label{sec:HRM_model}

\subsection{Governing equations and thermodynamic model}

Let $\Omega\subset\mathbb{R}^d$ be a bounded domain in $d$ space dimensions, we consider the multi-species and multi-temperature model for flows in thermal nonequilibrium \cite{park90}. Let the IBVP described by the multicomponent compressible Euler system for a mixture of $n_s$ species
\begin{subequations}\label{eq:HRM_PDEs}
\begin{align}
 \partial_t{\bf u} + \nabla\cdot{\bf f}({\bf u}) &= 0, \quad \mbox{in }\Omega\times(0,\infty), \label{eq:HRM_PDEs-a} \\
 {\bf u}(\cdot,0) &= {\bf u}_{0}(\cdot),\quad\mbox{in }\Omega, \label{eq:HRM_PDEs-b}
\end{align}
\end{subequations}

\noindent with some boundary conditions to be prescribed on $\partial\Omega$ (see \cref{sec:ES_BC}). Here
\begin{equation}\label{eq:HRM_u_f}
% {\bf u} = \big(\rho {\bf Y}^\top, \rho, \rho{\bf v}^\top, \rho E, \rho e_v\big)^\top, \quad
%{\bf f}({\bf u}) = \big(\rho {\bf v}{\bf Y}^\top, \rho{\bf v}, \rho{\bf v}{\bf v}^\top+\mathrm{p}{\bf I}, (\rho E+\mathrm{p}){\bf v}, \rho e_v{\bf v}\big)^\top,
 {\bf u} = \begin{pmatrix}\rhob \\ \rho{\bf v} \\ \rho E \\ \rho {\bf e}_v \end{pmatrix}, \quad
 {\bf f}({\bf u}) = \begin{pmatrix} \rhob{\bf v}^\top\\ \rho{\bf v}{\bf v}^\top+\mathrm{p}{\bf I} \\(\rho E+\mathrm{p}){\bf v}^\top \\ \rho {\bf e}_v{\bf v}^\top\end{pmatrix},
\end{equation}

\noindent denote the conserved variables and the convective fluxes with $\rhob=(\rho_1,\dots,\rho_{n_s})^\top$ the vector of densities of the $n_s$ species, while $\rho$, ${\bf v}$ in $\mathbb{R}^d$, and $E$ denote the density, velocity vector, and total specific energy of the mixture, respectively. By $\rho{\bf e}_v=(\rho_1e_1^v,\dots,\rho_{n_d}e_{n_d}^v)^\top$ we denote the  vector of partial vibration energies of the $n_d$ diatomic species that are in thermal nonequilibrium. Each partial vibration energy is linked to the associated vibration temperature $\mathrm{T}_\beta^v$ through
\begin{equation}\label{eq:def_nrj_vib}
% e_v=\sum_{\beta=1}^{n_d} Y_\beta e_\beta^v(\mathrm{T}_\beta^v), \quad e_\beta^v(\T_\beta^v) = r_\beta\frac{\vartheta_\beta^v}{\exp\big(\tfrac{\vartheta_\beta^v}{\mathrm{T}_\beta^v}\big)-1}, \quad r_\beta = \frac{\cal R}{M_\beta},
e_\beta^v(\T_\beta^v) = r_\beta\frac{\vartheta_\beta^v}{\exp\big(\tfrac{\vartheta_\beta^v}{\mathrm{T}_\beta^v}\big)-1}, \quad r_\beta = \frac{\cal R}{M_\beta},
\end{equation}

\noindent where $\vartheta_\beta^v$ is the characteristic harmonic oscillator temperature, and $r_\beta$ is the gas constant of the species $\beta$ with ${\cal R}$ is the universal gas constant and $M_\beta$ the molecular weight of the species.

%\red{We here consider the case of one diatomic molecule $1\leq\beta^\star\leq n_d$ being at temperature equilibrium: $\mathrm{T}_{\beta^\star}^v=\T$. This is the case of the nitrogen monoxide $NO$ in the multicomponent model of air (citer source)}. This scenario leads to numerical issues as we will see below.

The mixture density, pressure and vibration energy are defined from quantities of the individual species through
\begin{equation}\label{eq:def-rho-rhoE-p}
 \rho = \sum_{\alpha=1}^{n_s}\rho_\alpha = \rho \sum_{\alpha=1}^{n_s} Y_\alpha, \quad \mathrm{p} = \sum_{\alpha=1}^{n_s}\mathrm{p}_\alpha, \quad \rho e_v = \sum_{\beta=1}^{n_d} \rho_\beta e_\beta^v,
\end{equation}

\noindent where $Y_\alpha=\tfrac{\rho_\alpha}{\rho}$ denotes the mass fraction of the $\alpha$th species, so we have
\begin{equation}\label{eq:saturation_cond}
 \sum_{\alpha=1}^{n_s}Y_\alpha=1.
\end{equation}

The specific total energy of the mixture reads
\begin{equation}
 E = h_0 + e_t + e_v + e_c, \quad h_0 = \sum_{\alpha=1}^{n_s} Y_\alpha h_\alpha^0, \quad e_t=\sum_{\alpha=1}^{n_s}Y_\alpha e_\alpha^t, \quad e_c=\frac{1}{2}{\bf v}\cdot{\bf v}, 
\end{equation}

\noindent where $h_\alpha^0\geq0$ is the enthalpy of formation of species $\alpha$, $e_\alpha^t=C_{v_\alpha}^t\mathrm{T}$ denotes the internal translation-rotation energy with $C_{v_\alpha}^t=\tfrac{3}{2}r_\alpha$ for a monoatomic species and $C_{v_\alpha}^t=\tfrac{5}{2}r_\alpha$ for diatomic molecules. 
The EOS for the mixture pressure in \cref{eq:def-rho-rhoE-p} is given by the Dalton's law and the partial pressures are assumed to obey polytropic ideal gas EOSs:
\begin{equation}\label{eq:EOS_SG}
 \mathrm{p} = \sum_{\alpha=1}^{n_s}\rho_\alpha r_\alpha\mathrm{T} = \rho r({\bf Y})\mathrm{T}, \quad r({\bf Y}) = {\cal Z}({\bf Y}){\cal R}, \quad {\cal Z}({\bf Y})=\sum_{\alpha=1}^{n_s}\frac{Y_\alpha}{M_\alpha},
\end{equation}

\noindent where ${\bf Y}=(Y_1,\dots,Y_{n_s})^\top$. Note that the pressure may be also written as
\begin{equation}\label{eq:mixture_eos}
 \mathrm{p}({\bf Y},\rho,e_t) = \big(\gamma({\bf Y})-1\big)\rho e_t,
\end{equation}

\noindent with
\begin{equation}\label{eq:equiv-gamma}
 \gamma({\bf Y}) = \frac{r({\bf Y})}{C_{v_t}({\bf Y})} + 1, \quad r({\bf Y}) \overset{\cref{eq:EOS_SG}}{=} \sum_{\alpha=1}^{n_s} Y_\alpha r_\alpha, \quad C_{v_t}({\bf Y}) = \sum_{\alpha=1}^{n_s} Y_\alpha C_{v_\alpha}^t.
\end{equation}

This induces the following bounds on $\gamma({\bf Y})$:
\begin{equation}\label{eq:gamma-bound}
 \frac{7}{5} \leq \gamma({\bf Y}) \leq \frac{5}{3} \quad \forall 0\leq Y_{1\leq \alpha\leq n_s}\leq1.
\end{equation}

System \cref{eq:HRM_PDEs-a} is hyperbolic in the direction ${\bf n}$ in $\mathbb{R}^d$ over the set of states \cite{liu_vinokur_83}
\begin{equation}\label{eq:HRM-set-of-states}
 \Omega^a=\{{\bf u}\in\mathbb{R}^{n_s+n_d+d+1}:\; \rho_{1\leq\alpha\leq n_s} > 0, {\bf v}\in\mathbb{R}^d, e_t>0, e_{1\leq\beta\leq n_d}^v>0\},
\end{equation}

\noindent with eigenvalues $\lambda_1={\bf v}\cdot{\bf n}-c\leq \lambda_2=\dots=\lambda_{n_s+n_d+d}={\bf v}\cdot{\bf n}\leq\lambda_{n_s+n_d+d+1}={\bf v}\cdot{\bf n}+c$, where $\lambda_{1}$ and $\lambda_{n_s+n_d+d+1}$ are associated to genuinely nonlinear fields and $\lambda_{2\leq i\leq n_s+n_d+d}$ to linearly degenerate fields. The frozen sound speed reads
\begin{equation}\label{eq:sound_speed}
 c({\bf Y},e_t) = \sqrt{\gamma({\bf Y})\big(\gamma({\bf Y})-1\big)e_t}. 
\end{equation}

Finally note that we are assuming in \cref{eq:HRM-set-of-states} that the partial densities are positive which would prevent vanishing phases: $\rho_\alpha=0$ for some $\alpha$. When such situation occurs the partial velocities, pressure and energies of the species also vanish and this is equivalent to removing the species in the model \cref{eq:HRM_PDEs} so $\rho_\alpha>0$ in \cref{eq:HRM-set-of-states} is justified and do not exclude vanishing phases.

\subsection{Entropy pair}

Solutions to \cref{eq:HRM_PDEs} should satisfy an entropy inequality
\begin{equation}\label{eq:entropy_ineq_cont}
 \partial_t\eta({\bf u}) + \nabla\cdot{\bf q}({\bf u}) \leq 0
\end{equation}

\noindent for some entropy -- entropy flux  pair $(\eta,{\bf q})$ with $\eta(\cdot)$ a strictly convex function and $\etab'({\bf u})^\top{\bf f}_i'({\bf u})={\bf q}_i'({\bf u})^\top$ for $1\leq i\leq d$. In this section we recall the entropy pair for \cref{eq:HRM_PDEs} derived in \cite{flament_prudhomme_93} and then prove convexity of the entropy.

%\subsubsection{Physical entropy}

Following \cite{flament_prudhomme_93}, the entropy for a mixture with internal degrees of freedom in nonequilibrium is the sum of associated entropies defined by their differential forms
\begin{subequations}\label{eq:2nd_principe}
\begin{align}
 \mathrm{T}d\mathrm{s}_\alpha^t &= de_\alpha^t + \mathrm{p}_\alpha d\tau_\alpha \quad \forall 1\leq\alpha\leq n_s, \label{eq:2nd_principe-a} \\ \T_\beta^vd\mathrm{s}_\beta^v &= de_\beta^v \quad \forall 1\leq\beta\leq n_d, \label{eq:2nd_principe-b}
\end{align}
\end{subequations}
\noindent with $\tau_\alpha=\tfrac{1}{\rho_\alpha}$ the covolume of the species $\alpha$. The entropy pair in \cref{eq:entropy_ineq_cont} reads
\begin{equation}\label{eq:phys_entropy_pair}
 \eta({\bf u}) =-\rho\mathrm{s}({\bf u}), \quad {\bf q}({\bf u}) =-\rho\mathrm{s}({\bf u}){\bf v}, \quad \mathrm{s}\equiv \sum_{\alpha=1}^{n_s}Y_\alpha\mathrm{s}_\alpha^t + \sum_{\beta=1}^{n_d}Y_\beta\mathrm{s}_\beta^v.
\end{equation}

Neglecting rotation-vibration coupling and anharmonic contributions, the specific entropies read \cite{flament_prudhomme_93} (up to some additive constants) 
\begin{subequations}\label{eq:def_partial_entropy}
\begin{align}
 \mathrm{s}_\alpha^t(\tau_\alpha,e_\alpha^t) &=C_{v_\alpha}^t\ln(e_\alpha^t) +  r_\alpha\ln(\tau_\alpha) \quad \forall 1\leq\alpha\leq n_s, \label{eq:def_partial_entropy-a}\\ 
 \mathrm{s}_\beta^v(e_\beta^v) &=  r_\beta\ln(e_\beta^v) +  r_\beta\Big(1+\tfrac{e_\beta^v}{r_\beta\vartheta_\beta^v}\Big)\ln\Big(1+\tfrac{r_\beta\vartheta_\beta^v}{e_\beta^v}\Big) \quad \forall 1\leq\beta\leq n_d. \label{eq:def_partial_entropy-b}
\end{align}
\end{subequations}

Note that for smooth solutions, manipulations of \cref{eq:HRM_PDEs} together with \cref{eq:2nd_principe} show that these entropies satisfy the following conservation laws
\begin{equation*}
 \partial_t\Big(\sum_{\alpha=1}^{n_s}\rho_\alpha\mathrm{s}_\alpha^t\Big) + \nabla\cdot\Big(\sum_{\alpha=1}^{n_s}\rho_\alpha\mathrm{s}_\alpha^t{\bf v}\Big) = 0, \quad \partial_t(\rho_\beta\mathrm{s}_\beta^v) + \nabla\cdot(\rho_\beta\mathrm{s}_\beta^v{\bf v}) = 0 \quad \forall 1\leq\beta\leq n_d.
\end{equation*}

%\subsubsection{Mathematical entropy}

%The expression of the vibration entropy \cref{eq:def_partial_entropy-b} involves intricate operations on the specific vibration energies of individual species, $e_\beta^v$. Since we consider only a conservation equation for the vibration energy of the mixture, $e_v$, the evaluation of the $e_\beta^v$ from \cref{eq:def_nrj_vib} requires the solution of an implicit relation via a Newton method for $\mathrm{T}_v$ and thus will lead to cumbersome operations. In this work, we circumvent this difficulty by considering another entropy in place of the vibration entropy. This entropy should allow the derivation of an evolution equation for the total entropy in conservation form, to guaranty its strict convexity and to lead to cheaper algorithms for the evaluation of the numerical fluxes.

%
\begin{proposition}\label{th:convexity_entropy_ms_euler}
 The entropy in \cref{eq:phys_entropy_pair} is a strictly convex and twice differentiable function of ${\bf u}$ in $\Omega^a$.% providing that $\s_\beta^v''(\cdot)>0$.
\end{proposition}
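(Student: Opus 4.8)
\textit{Proof proposal.} The idea is to write $\eta=\Sigma\circ\Phi$ as a composition of two smooth maps whose convexity and monotonicity are transparent, and then to propagate strict convexity through the composition with the second-order chain rule. I would first introduce the auxiliary variables ${\bf y}=(\rhob,{\cal E}_t,\rho{\bf e}_v)$, where ${\cal E}_t:=\rho e_t=\rho E-\sum_\alpha\rho_\alpha h_\alpha^0-\sum_\beta\rho_\beta e_\beta^v-\tfrac{|\rho{\bf v}|^2}{2\rho}$ is the translation-rotation internal energy expressed through the conserved variables. Because all species carry the common temperature $\mathrm{T}=e_t/C_{v_t}({\bf Y})={\cal E}_t/{\cal C}$ with ${\cal C}:=\sum_\alpha\rho_\alpha C_{v_\alpha}^t$, the first (translational) sum in $-\eta=\rho\mathrm{s}=\sum_\alpha\rho_\alpha\mathrm{s}_\alpha^t+\sum_\beta\rho_\beta\mathrm{s}_\beta^v$ collapses to a function of $(\rhob,{\cal E}_t)$ only,
\begin{equation*}
 \sum_{\alpha=1}^{n_s}\rho_\alpha\mathrm{s}_\alpha^t={\cal C}\ln{\cal E}_t-{\cal C}\ln{\cal C}-\sum_{\alpha=1}^{n_s}\rho_\alpha r_\alpha\ln\rho_\alpha+\sum_{\alpha=1}^{n_s}\rho_\alpha C_{v_\alpha}^t\ln C_{v_\alpha}^t,
\end{equation*}
whose last sum is affine in $\rhob$, while the vibrational part is $\sum_\beta\rho_\beta\mathrm{s}_\beta^v\big((\rho{\bf e}_v)_\beta/\rho_\beta\big)$, a sum of perspective functions of the scalar entropies of \cref{eq:def_partial_entropy-b}. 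This fixes $\Sigma({\bf y})=-\big(\sum_\alpha\rho_\alpha\mathrm{s}_\alpha^t+\sum_\beta\rho_\beta\mathrm{s}_\beta^v\big)$ and $\Phi({\bf u})=(\rhob,{\cal E}_t({\bf u}),\rho{\bf e}_v)$. On $\Omega^a$ the quantities $\rho_\alpha$, ${\cal C}$, ${\cal E}_t=\rho e_t$ and $e_\beta^v$ are all positive, hence $\Sigma$ and $\Phi$ are $C^\infty$ there and $\eta=\Sigma\circ\Phi$ is infinitely differentiable, which already disposes of the differentiability claim.

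Next I would show that $\sigma:=-\Sigma$ is strictly concave, that is, $\nabla^2_{\bf y}\sigma\prec0$, by reading off its Hessian block by block. From the displayed identity, a perturbation $({\bf x},y)$ of $(\rhob,{\cal E}_t)$ yields the translational quadratic form
\begin{equation*}
 -\sum_{\alpha=1}^{n_s}\frac{r_\alpha}{\rho_\alpha}\,x_\alpha^2-\frac{1}{{\cal C}}\Big(\sum_{\alpha=1}^{n_s}C_{v_\alpha}^t\,x_\alpha-\frac{{\cal C}}{{\cal E}_t}\,y\Big)^2\le0,
\end{equation*}
which vanishes only if ${\bf x}=0$, and then only if $y=0$. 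A perturbation $(x_\beta,z_\beta)$ of $(\rho_\beta,(\rho{\bf e}_v)_\beta)$ adds the rank-one term $\rho_\beta^{-1}(\mathrm{s}_\beta^v)''(e_\beta^v)\,(z_\beta-e_\beta^v\,x_\beta)^2$, and differentiating \cref{eq:def_partial_entropy-b} gives $(\mathrm{s}_\beta^v)''(e_\beta^v)=-r_\beta\big(e_\beta^v(e_\beta^v+r_\beta\vartheta_\beta^v)\big)^{-1}<0$ (equivalently $(\mathrm{s}_\beta^v)'=1/\mathrm{T}_\beta^v$ by \cref{eq:2nd_principe-b}, with $e_\beta^v$ increasing in $\mathrm{T}_\beta^v$ by \cref{eq:def_nrj_vib}). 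Summing the two blocks, vanishing of the total quadratic form forces ${\bf x}=0$, hence $y=0$, hence every $z_\beta=0$; thus $\nabla^2_{\bf y}\sigma\prec0$, and in particular $\partial_{{\cal E}_t}\sigma={\cal C}/{\cal E}_t=1/\mathrm{T}>0$.

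Finally I would transport this to ${\bf u}$. The components $\rhob$ and $\rho{\bf e}_v$ of $\Phi$ are linear, whereas ${\cal E}_t({\bf u})$ is concave, being a linear function minus the convex map $(\rho,\rho{\bf v})\mapsto\tfrac{|\rho{\bf v}|^2}{2\rho}$; since this is the only nonlinear component of $\Phi$, with $\nabla^2_{\bf u}{\cal E}_t=-\nabla^2_{\bf u}\big(\tfrac{|\rho{\bf v}|^2}{2\rho}\big)$, the second-order chain rule and $\partial_{{\cal E}_t}\Sigma=-1/\mathrm{T}$ give, on $\Omega^a$,
\begin{equation*}
 \nabla^2_{\bf u}\eta=D\Phi^\top\nabla^2_{\bf y}\Sigma\,D\Phi+\frac{1}{\mathrm{T}}\,\nabla^2_{\bf u}\Big(\frac{|\rho{\bf v}|^2}{2\rho}\Big),
\end{equation*}
a sum of two positive semidefinite matrices, so $\eta$ is convex. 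For strictness I would use that the kernel of a sum of positive semidefinite matrices is the intersection of their kernels: since $\nabla^2_{\bf y}\Sigma\succ0$, the first term vanishes exactly on $\ker D\Phi=\{\delta{\bf u}:\delta\rhob=0,\ \delta(\rho{\bf e}_v)=0,\ \delta(\rho E)={\bf v}\cdot\delta(\rho{\bf v})\}$, and on any such $\delta{\bf u}$ one has $\delta\rho=0$, so the kinetic term equals $\rho^{-1}|\delta(\rho{\bf v})|^2$, which is positive unless $\delta(\rho{\bf v})=0$; but then $\delta(\rho E)=0$ as well, so the two kernels meet only at $0$ and $\nabla^2_{\bf u}\eta\succ0$, proving strict convexity. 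The main obstacle is exactly this last step: the perspective structure of the vibrational entropies and the non-injectivity of $\Phi$ each destroy strictness when taken alone, and one has to verify that the two degeneracies are complementary: the first is removed by the coupling to the strictly concave translational block in $\sigma$, the second by the transverse positive definiteness of the kinetic-energy Hessian.
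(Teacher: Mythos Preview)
Your proof is correct and takes a genuinely different route from the paper's. The paper proceeds by a congruence argument in the spirit of Harten \emph{et al.}: it introduces the one-to-one change of variables ${\bf Z}=(\rhob,{\bf v},\mathrm{T},{\bf e}_v)$, computes the entropy variables $\etab'({\bf u})$ explicitly, and verifies by direct matrix multiplication that
\[
\Big(\tfrac{\partial{\bf u}}{\partial{\bf Z}}\Big)^{\!\top}\boldsymbol{\cal H}_\eta\,\tfrac{\partial{\bf u}}{\partial{\bf Z}}
=\mbox{diag}\big((r_\alpha\tau_\alpha)_\alpha,(\rho\theta)_{1\le i\le d},\rho C_{v_t}({\bf Y})\theta^2,(-\rho_\beta\mathrm{s}_\beta^{v''})_\beta\big)\succ0,
\]
from which strict convexity follows at once. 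Your argument instead factors $\eta=\Sigma\circ\Phi$ through the ``thermodynamic'' map $\Phi({\bf u})=(\rhob,{\cal E}_t,\rho{\bf e}_v)$, shows $\Sigma$ strictly convex by reading off its Hessian as a sum of explicit negative squares, and then lifts positivity through the second-order chain rule, handling the loss of injectivity of $\Phi$ with the transverse kinetic-energy Hessian. The paper's approach has the advantage of producing an explicit diagonal symmetrizer (useful downstream, e.g.\ for entropy-variable formulations); yours has the advantage of separating cleanly the kinematic and thermodynamic contributions, avoids the large matrix products, and makes transparent why the degeneracies of the perspective (vibrational) block and of $D\Phi$ are complementary. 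Both are sound; your block-by-block quadratic forms are easily checked and your kernel analysis for strictness is accurate.
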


\begin{proof}
Twice differentiability is straightforward from \cref{eq:def_partial_entropy}. To prove the convexity we use the trick introduced in \cite{harten_etal_hyper_98} and also used in \cite{gouasmi_etal_min_pcpe_20} to prove that the Hessian of the entropy $\boldsymbol{\cal H}_\eta$ is congruent to the following strictly convex diagonal matrix:
\begin{align}\label{eq:congruence_hessian}
  \frac{\partial{\bf u}}{\partial{\bf Z}}^\top\boldsymbol{\cal H}_\eta\frac{\partial{\bf u}}{\partial{\bf Z}} &= \frac{\partial{\bf u}}{\partial{\bf Z}}^\top\frac{\partial{\etab'(\bf u})}{\partial{\bf Z}} \nonumber\\
  &= \mbox{diag}\Big((r_\alpha\tau_\alpha)_{1\leq\alpha\leq n_s},(\rho\theta)_{1\leq i\leq d},\rho C_{v_t}({\bf Y})\theta^2, \big(-\rho_\beta \s_\beta^{v''}(e_\beta^v)\big)_{1\leq\beta\leq n_d}\Big),
\end{align}
\noindent\noindent where $\theta=\tfrac{1}{\mathrm{T}}$, $\s_\beta^{v''}(e_\beta^v)=-\tfrac{r_\beta}{e_\beta^v(e_\beta^v+r_\beta\vartheta_\beta^v)}<0$ from \cref{eq:def_partial_entropy-b}, and ${\bf Z}({\bf u})=(\rhob^\top,{\bf v}^\top,\mathrm{T},{\bf e}_v^\top)^\top$ denotes a one-to-one change of variables. Indeed, with some slight abuse in the notation we have
\begin{equation*}
 \frac{\partial{\bf u}}{\partial{\bf Z}} = \begin{pmatrix}
  1 &  &  &  & 0 & 0 & 0 & 0 & 0 & 0\\
	  & \ddots & & &  & \vdots & \vdots & \vdots & \vdots & \vdots \\
	  &  & \ddots &  &  & \vdots & \vdots & \vdots & \vdots & \vdots \\
		&  &   & \ddots & & \vdots & \vdots & \vdots & \vdots & \vdots \\
	0 &  &  & & 1 & 0 & 0 & 0 & 0 & 0 \\
	{\bf v} & \cdots & {\bf v} & \cdots & {\bf v} & \rho{\bf I}_d & 0 & 0 & 0 & 0 \\
	\rho E_1 & \cdots & \rho E_{n_d} & \cdots & \rho E_{n_s} & \rho{\bf v}^\top & \rho C_{v_t}({\bf Y}) & \rho_1 & \cdots & \rho_{n_d} \\
	e_1^v &  & 0 & \cdots & 0 & 0 & 0 & \rho_1 &  & 0 \\
	  & \ddots & & \ddots & \vdots & \vdots & \vdots & & \ddots & \\
	0 &   & e_{n_d}^v & & 0 & 0 & 0 & 0 &  &\rho_{n_d}
\end{pmatrix},
\end{equation*}

\noindent where ${\bf I}_d$ is the identity matrix of size $d$ and $\rho E_\alpha=\partial_{\rho_\alpha}\rho E=C_{v_\alpha}^t\mathrm{T}+h_\alpha^0+\psi_\alpha e_\alpha^v+e_c$ with $\psi_\alpha=1$ if $1\leq\alpha\leq n_d$ and $\psi_\alpha=0$ if $n_d<\alpha\leq n_s$. So $\det(\partial_{{\bf Z}}{\bf u})=\rho^{d+1}C_{v_t}({\bf Y})\Pi_{\beta=1}^{n_d}\rho_\beta>0$. %Note that we have increased the number of vibration energies from $n_d$ to $n_s$ for notational convenience.

Let $g_\alpha^t=e_\alpha^t+\mathrm{p}_\alpha\tau_\alpha-\mathrm{T}\mathrm{s}_\alpha^t$ be the free Gibbs energy of the $\alpha$th species and $g_\beta^v=e_\beta^v-\mathrm{T}_\beta^v\mathrm{s}_\beta^v$, using the differential forms \cref{eq:2nd_principe} we obtain %the differentials of the contributions in the mixture entropy \cref{eq:HRM_entropy_pair} per unit volume reads
\begin{align*}
% d(\rho\mathrm{s}) &= \sum_{\alpha=1}^{n_s} d(\rho_\alpha\mathrm{s}_\alpha^t) \\
\T\sum_{\alpha=1}^{n_s} d(\rho_\alpha\mathrm{s}_\alpha^t) &= \sum_{\alpha=1}^{n_s}  d(\rho_\alpha e_\alpha^t) - (e_\alpha^t+\mathrm{p}_\alpha\tau_\alpha-\mathrm{T}\mathrm{s}_\alpha^t)d\rho_\alpha\\
% &=  d\big(\rho E - \rho h_0 - \rho e_v -\rho e_c\big) -  \sum_{\alpha=1}^{n_s} g_\alpha^td\rho_\alpha \\
% &=  d(\rho E - \rho e_v) -  {\bf v}\cdot d(\rho{\bf v}) + e_cd\rho -  \sum_{\alpha=1}^{n_s} (g_\alpha^t+h_\alpha^0)d\rho_\alpha \\
&=  d(\rho E) - d(\rho e_v) - {\bf v}\cdot d(\rho{\bf v}) - \sum_{\alpha=1}^{n_s}  (g_\alpha^t+h_\alpha^0-e_c)d\rho_\alpha, \\
\sum_{\beta=1}^{n_d} d(\rho_\beta\s_\beta^v) &= \sum_{\beta=1}^{n_d} \big(\s_\beta^v(e_\beta^v)-e_\beta^v\theta_\beta^{v}\big)d\rho_\beta + \theta_\beta^{v}d(\rho_\beta e_\beta^v),
\end{align*}

\noindent with $\theta_\beta^v=\tfrac{1}{\T_\beta^v}=\s_\beta^{v'}(e_\beta^v)$, so the entropy variables read
\begin{equation}\label{eq:entropy_var}
  \etab'({\bf u}) := \bigg(\frac{\eta({\bf u})}{\partial{\bf u}}\bigg)^\top  = \begin{pmatrix} C_{v_1}^t+ r_1-\mathrm{s}_1^t + \psi_1\theta_1^v g_1^v+(h_1^0-e_c)\theta \\ \vdots \\ C_{v_{n_s}}^t+ r_{n_s}-\mathrm{s}_{n_s}^t + \psi_{n_s}\theta_{n_s}^v g_{n_s}^v+(h_{n_s}^0-e_c)\theta \\ \theta{\bf v} \\ -\theta \\ \theta-\theta_1^v \\ \vdots \\ \theta-\theta_{n_d}^v \end{pmatrix},
\end{equation}

\noindent and we obtain for $\tfrac{\partial{\etab'(\bf u})}{\partial{\bf Z}}$
\begin{equation*}
 %\frac{\partial{\etab'(\bf u})}{\partial{\bf Z}} = 
\begin{pmatrix}
  \tfrac{r_1}{\rho_1} &  & & & 0 & -\theta{\bf v}^\top & -C_{v_1}^t\theta-(h_1^0-e_c)\theta^2 & e_1^v\s_1^{v''}(e_1^v) & & 0 \\
	  & \ddots & & & & \vdots & \vdots & & \ddots & \\
	  &  & \tfrac{r_{n_d}}{\rho_{n_d}} & & & -\theta{\bf v}^\top & -C_{v_{n_d}}^t\theta-(h_{n_d}^0-e_c)\theta^2 & 0 & & e_{n_d}^v\s_{n_d}^{v''}(e_{n_d}^v) \\
	  &  &  & \ddots &  & \vdots & \vdots & \vdots & \ddots & 0 \\
 	0 & & &  & \tfrac{r_{n_s}}{\rho_{n_s}} & -\theta{\bf v}^\top & -C_{v_{n_s}}^t\theta-(h_{n_s}^0-e_c)\theta^2 & 0 & \cdots & 0 \\
	0 & \cdots & \cdots & \cdots & 0 & \theta{\bf I}_d & -\theta^2{\bf v} & 0 & \cdots & 0 \\
	0 & \cdots & \cdots & \cdots & 0 & 0 & \theta^2 & 0 & \cdots & 0 \\
	0 & \cdots & 0 & \cdots & 0 & 0 & -\theta^2 & -\s_1^{v''}(e_1^v) & & 0 \\
	\vdots & \ddots & \vdots & \ddots & \vdots & \vdots & \vdots & & \ddots & \\
%	  & 0 & \ddots &  & \vdots & \vdots & & \ddots & \\
	0 & \cdots & 0 & \cdots & 0 & 0 & -\theta^2 &  0 & & -\s_{n_d}^{v''}(e_{n_d}^v)
\end{pmatrix},
\end{equation*}
\noindent so it may be easily checked that \cref{eq:congruence_hessian} holds true. \qed
\end{proof}

Finally, let $\tau=\tfrac{1}{\rho}$ be the covolume of the mixture. Using $\tfrac{e_\alpha^t}{C_{v_\alpha}^t}=\tfrac{e_t}{C_{v_t}({\bf Y})}=\mathrm{T}$ and $Y_\alpha\tau_\alpha=\tau$, for all $\alpha$, in \cref{eq:def_partial_entropy} the entropy of the mixture in \cref{eq:phys_entropy_pair} may be written as
\begin{subequations}\label{eq:mixture_entropy}
\begin{align}
  \mathrm{s}({\bf Y},\tau,e_t,{\bf e}_v) &= \sum_{\alpha=1}^{n_s} Y_\alpha C_{v_\alpha}^t\ln\Big(\frac{C_{v_\alpha}^t}{C_{v_t}({\bf Y})}e_t\Big) + Y_\alpha r_\alpha\ln\Big(\frac{\tau}{Y_\alpha}\Big) + \s_v({\bf Y},{\bf e}_v)\nonumber \\
	&= C_{v_t}({\bf Y})\ln e_t + r({\bf Y})\ln\tau+K({\bf Y}) + \s_v({\bf Y},{\bf e}_v),  \\
	K({\bf Y}) &= \sum_{\alpha=1}^{n_s}Y_\alpha\Big(C_{v_\alpha}^t\ln\big(\tfrac{C_{v_\alpha}^t}{C_{v_t}({\bf Y})}\big)-r_\alpha\ln Y_\alpha\Big), \\
	\s_v({\bf Y},{\bf e}_v) &= \sum_{\beta=1}^{n_d} Y_\beta \s_\beta^v(e_\beta^v).
\end{align}
\end{subequations}

\section{Finite volume method}\label{sec:FV-schemes}

We consider finite volume schemes for unstructured meshes $\Omega_h\subset\mathbb{R}^d$ of the form
\begin{equation}\label{eq:2D-FV-scheme}
 {\bf U}_\kappa^{n+1} - {\bf U}_\kappa^{n} + \frac{\Delta t^{(n)}}{|\kappa|}\sum_{e\in\partial\kappa} |e|{\bf h}({\bf U}_\kappa^{n},{\bf U}_{\kappa_e^+}^{n},{\bf n}_e) = 0 \quad \forall \kappa\in\Omega_h, n\geq 0,
\end{equation}

\noindent for the discretization of \cref{eq:HRM_PDEs-a}. Here ${\bf U}_{\kappa}^{n+1}$ approximates the averaged solution in the cell $\kappa$ at time $t^{(n+1)}=t^{(n)}+\Delta t^{(n)}$, $\Delta t^{(n)}>0$ is the time step, ${\bf n}_e$ is the unit outward normal vector on the edge $e$ in $\partial\kappa$, and $\kappa_e^+$ the neighboring cell sharing the interface $e$ (see \cref{fig:stencil_2D}). We assume that each element is shape-regular in the sense of \cite{ciarlet2002finite}: the ratio of the radius of the largest inscribed ball to the diameter is bounded by below by a positive constant independent of the mesh. The initial condition for \cref{eq:2D-FV-scheme} reads
\begin{equation*}%\label{eq:2D-FV-scheme}
 {\bf U}_\kappa^{0} = \frac{1}{|\kappa|}\int_\kappa {\bf u}_0({\bf x}) dV \quad \forall \kappa\in\Omega_h.
\end{equation*}

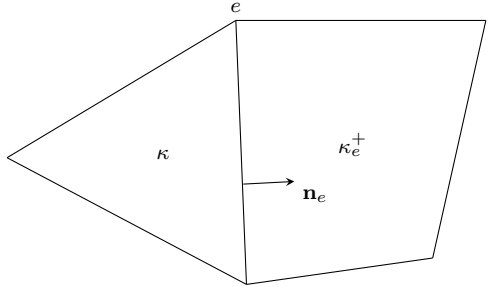
\begin{figure}[ht]
 \begin{center}
  \begin{tikzpicture}[scale=0.7]
   \draw (1.43,1.47) node {$\kappa$};
   \draw (5.,1.65)   node {$\kappa_e^+$};
   \draw (2.8,4.0)    node[above] {$e$};
   
   \draw [>=stealth,->] (2.94,0.90) -- (3.9,0.95) ;
   \draw (3.9,0.95) node[below right] {${\bf n}_e$};
   
   \draw [>=stealth,-] (-1.5,1.4) -- (3.,-1.) ;
   \draw [>=stealth,-] (3.,-1.) -- (2.8,4.0) ;
   \draw [>=stealth,-] (2.8,4.0) -- (-1.5,1.4) ;
   \draw [>=stealth,-] (3.,-1.) -- (6.5,-0.5) ;
   \draw [>=stealth,-] (6.5,-0.5) -- (7.5,4.) ;
   \draw [>=stealth,-] (7.5,4.) -- (2.8,4.0) ;
  \end{tikzpicture}
  \caption{Notations for the mesh for $d=2$.}
  \label{fig:stencil_2D}
 \end{center}
\end{figure}

It is convenient to also consider three-point numerical schemes of the form
\begin{equation}\label{eq:3pt-scheme-a}
  {\bf U}_j^{n+1} - {\bf U}_j^{n} + \frac{\Delta t^{(n)}}{\Delta x}\big({\bf h}({\bf U}_{j}^{n},{\bf U}_{j+1}^{n},{\bf n})-{\bf h}({\bf U}_{j-1}^{n},{\bf U}_{j}^{n},{\bf n})\big) = 0, 
\end{equation}

\noindent where ${\bf U}_{j}^{n}$ approximates the averaged solution in the $j$th cell at time $t^{(n)}$, $\Delta x$ is the space step. In particular we are looking for schemes \cref{eq:3pt-scheme-a} that have the following properties under a given condition on the time step
\begin{equation}\label{eq:3pt-scheme-CFL}
  \frac{\Delta t^{(n)}}{\Delta x}\max_{j\in\mathbb{Z}}|\lambda({\bf U}_{j}^{n})|\leq\frac{1}{2},
\end{equation}

\noindent where $|\lambda(\cdot)|$ corresponds to the maximum absolute value of the wave speeds (and will be defined in \cref{sec:ex-ES-3pt-schemes}): the scheme is

\begin{enumerate}[label=(\roman*)]
 \item {\it consistent} with \cref{eq:HRM_PDEs-a} and {\it conservative} which requires the numerical flux to be consistent:
\begin{equation}\label{eq:consistent_flux}
 {\bf h}({\bf u},{\bf u},{\bf n}) = {\bf f}({\bf u})\cdot{\bf n} \quad \forall {\bf u}\in\Omega^a,
\end{equation}

\noindent and conservative:
\begin{equation}\label{eq:conservative_flux}
   {\bf h}({\bf u}^-,{\bf u}^+,{\bf n}) =-{\bf h}({\bf u}^+,{\bf u}^-,-{\bf n}) \quad \forall {\bf u}^\pm\in\Omega^a;
\end{equation}
 \item {\it Lipschitz continuous} which also requires the numerical flux to be Lipschitz continuous;
 \item {\it entropy stable (ES)} for the pair $(\eta,{\bf q})$ in \cref{eq:entropy_ineq_cont}: it satisfies the inequality
\begin{equation}\label{eq:3pt-scheme-ineq}
 \eta({\bf U}_j^{n+1})-\eta({\bf U}_j^{n}) + \frac{\Delta t^{(n)}}{\Delta x}\big(Q({\bf U}_{j}^{n},{\bf U}_{j+1}^{n},{\bf n})-Q({\bf U}_{j-1}^{n},{\bf U}_{j}^{n},{\bf n})\big) \leq 0, %\quad 0<\tfrac{\Delta t}{\Delta x}\lambda_\kappa\leq\alpha_{max},
\end{equation}
\noindent with some conservative and consistent entropy numerical flux $Q({\bf u},{\bf u},{\bf n}) = {\bf q}({\bf u})\cdot{\bf n}$;

 \item {\it robust}: the solution remains in the set of states \cref{eq:HRM-set-of-states}: ${\bf U}_{j\in\mathbb{Z}}^{n}$ in $\Omega^a$ implies ${\bf U}_{j\in\mathbb{Z}}^{n+1}$ in $\Omega^a$;

 \item and it satisfies a {\it discrete maximum principle on the mass fractions}: 
\begin{equation}\label{eq:3pt-scheme-mass-minmax}
 \min(Y_{\alpha_{j-1}}^{n},Y_{\alpha_{j}}^{n},Y_{\alpha_{j+1}}^{n})\leq Y_{\alpha_j}^{n+1}\leq\max(Y_{\alpha_{j-1}}^{n},Y_{\alpha_{j}}^{n},Y_{\alpha_{j+1}}^{n}) \; \forall 1\leq \alpha\leq n_s,
\end{equation}

 \item together with a {\it minimum principle on the specific entropy} in \cref{eq:phys_entropy_pair} \cite{tadmor86,gouasmi_etal_min_pcpe_20}:
\begin{equation}\label{eq:3pt-scheme-entropy-minmax}
 \s({\bf U}_j^{n+1})\geq\min\big(\s({\bf U}_{j-1}^{n}),\s({\bf U}_j^{n}),\s({\bf U}_{j+1}^{n})\big).
\end{equation}
\end{enumerate}

Then it is a classical matter (see e.g. \cite{perthame_shu_96,tadmor87,tadmor03,godlewski-raviart} and references therein) that the finite volume scheme \cref{eq:2D-FV-scheme} with the same numerical flux enjoys similar properties. Under the following condition on the time step
\begin{equation}\label{eq:CFL-positive-2D-scheme}
  \Delta t^{(n)} \max_{\kappa\in\Omega_h}\frac{|\partial\kappa|}{|\kappa|}\max_{e\in\partial\kappa}|\lambda({\bf U}_{\kappa^\pm}^{n})| \leq \frac{1}{2}, \quad |\partial\kappa|:=\sum_{e\in\partial\kappa}|e|,
\end{equation}

\noindent the scheme is robust and is a convex combination of updates of three-point schemes \cref{eq:3pt-scheme-a}:
\begin{equation}\label{eq:2D-FV-scheme-update-3pt}
 {\bf U}_\kappa^{n+1} = \sum_{e\in\partial\kappa} \frac{|e|}{|\partial\kappa|}\Big({\bf U}_\kappa^{n} - \frac{\Delta t^{(n)}|\partial\kappa|}{|\kappa|}\big({\bf h}({\bf U}_\kappa^{n},{\bf U}_{\kappa_e^+}^{n},{\bf n}_e)-{\bf h}({\bf U}_\kappa^{n},{\bf U}_{\kappa}^{n},{\bf n}_e)\big)\Big),
\end{equation}

\noindent with weights $\tfrac{|e|}{|\partial\kappa|}$. Therefore, the scheme \cref{eq:2D-FV-scheme} also satisfies the discrete minimum and maximum principles together with the entropy inequality
\begin{equation}\label{eq:2D-FV-scheme-ineq}
  \eta({\bf U}_\kappa^{n+1})-\eta({\bf U}_\kappa^{n}) + \frac{\Delta t^{(n)}}{|\kappa|}\sum_{e\in\partial\kappa} |e|Q({\bf U}_\kappa^{n},{\bf U}_{\kappa_e}^{n},{\bf n}_e) \leq 0,
% \frac{d\eta({\bf U}_\kappa^{n})}{dt} + \sum_{e\in\partial\kappa} \frac{|e|}{2|\kappa|}\Big({\bf h}({\bf U}_\kappa^{n},{\bf U}_{\kappa_e^+}^{n},{\bf n}_e)\cdot\big(\etab'({\bf U}_\kappa^{n})+\etab'({\bf U}_{\kappa_e^+}^{n})\big) - \big(\psib({\bf U}_\kappa^{n})+ \psib({\bf U}_{\kappa_e^+}^{n})\big)\cdot{\bf n}_e\Big)\leq 0,
\end{equation}

\noindent consistent with \cref{eq:entropy_ineq_cont}.

In the following we design finite volume schemes \cref{eq:2D-FV-scheme} with the CFL condition \cref{eq:CFL-positive-2D-scheme} by first designing three-point schemes \cref{eq:3pt-scheme-a} that satisfy properties (i) to (vi) with \cref{eq:3pt-scheme-CFL}.

%
%%%
%
\section{Energy relaxation approximation}\label{sec:relax_flux}

In this section we derive a general framework that allows the use of standard numerical schemes for the classical gas dynamics with a polytropic ideal gas EOS. The main results are summarized in \cref{th:ES-3-pt-scheme} and show how to build a three-point scheme for \cref{eq:HRM_PDEs-a} that enjoys the properties (i) to (vi) in \cref{sec:FV-schemes} from a three-point scheme for the compressible Euler equations with a polytropic law.

We here extend the energy relaxation approximation for the multicomponent Euler system \cite{coquel_perthame_98} to include the vibration energies (\cref{sec:nrj_relax_sys}) and introduce a convex entropy in \cref{sec:nrj_relax_entropy}. \Cref{sec:nrj_relax_var_pcpe} is devoted to the analysis of solutions to the relaxation system close to equilibrium. In the limit of instantaneous relaxation, we prove that:
\begin{itemize}
 \item solutions to the relaxation system formally converge to a unique and stable equilibrium solution to the multicomponent Euler equations \cref{eq:HRM_PDEs-a} (\cref{th:H-theorem});
 \item this equilibrium corresponds to a global minimum of the relaxation entropy which satisfies a variational principle (\cref{th:var-principle});
 \item small perturbations close to the equilibrium are associated to dissipative processes in \cref{eq:HRM_PDEs-a} (\cref{th:chapman-enskog}).
\end{itemize}
 
These results are then used to infer a numerical scheme for \cref{eq:HRM_PDEs-a} from one for the relaxation system (\cref{sec:FVrelax2FVEuler_ms}) based on a splitting of the hyperbolic and relaxation operators.

\subsection{Energy relaxation system}\label{sec:nrj_relax_sys}

Following the energy relaxation method introduced in \cite{coquel_perthame_98}, we consider the system
\begin{equation}\label{eq:relax-nrj-HRM-sys}
 \partial_t{\bf w}^\epsilon + \nabla\cdot{\bf g}({\bf w}^\epsilon) =-\frac{1}{\epsilon}\big({\bf w}^\epsilon-{\cal M}({\bf w}^\epsilon)\big),
\end{equation}
\noindent and we will denote by \cref{eq:relax-nrj-HRM-sys}$_{\epsilon\rightarrow\infty}$ the system in homogeneous form, i.e., with $\epsilon\rightarrow\infty$. Here
\begin{equation*}%\label{eq:relax-nrj-HRM-sys-details}
 {\bf w} = \begin{pmatrix}\rhob \\ \rho {\bf v} \\ \rho E_r \\ \rho {\bf e}_v \\ \rho e_s \end{pmatrix}, \;
{\bf g}({\bf w}) = \begin{pmatrix} \rhob{\bf v}^\top \\ \rho {\bf v}{\bf v}^\top+\mathrm{p}_r(\rho,e_r){\bf I} \\ \big(\rho E_r+\mathrm{p}_r(\rho,e_r)\big){\bf v}^\top \\ \rho {\bf e}_v {\bf v}^\top \\ \rho e_s {\bf v}^\top \end{pmatrix}, \;
 {\bf w}-{\cal M}({\bf w}) = \begin{pmatrix} 0 \\ 0 \\ \rho\big(F({\bf Y},e_r)-e_s\big) \\ 0 \\ \rho\big(e_s-F({\bf Y},e_r)\big) \end{pmatrix},
\end{equation*}
\noindent with $\epsilon>0$ the relaxation time scale, and
\begin{equation}\label{eq:EOS-relax}
 \mathrm{p}_r(\rho,e_r) = (\gamma-1)\rho e_r, \quad e_r=E_r-e_c.
\end{equation}

Solutions to \cref{eq:relax-nrj-HRM-sys} satisfy the additional conservation law
\begin{equation}\label{eq:cons_law_mixture_density}
 \partial_t\rho^\epsilon + \nabla\cdot(\rho^\epsilon{\bf v}^\epsilon) = 0,
\end{equation}

\noindent for the mixture density so the variables $\rho$, $\rho{\bf v}$ and $\rho E_r$ are uncoupled from ${\bf Y}$ and ${\bf e}_v$ and coupled to $e_s$ through the relaxation source terms only. This is an important aspect of the model \cref{eq:relax-nrj-HRM-sys} and also allows to interpret $E_r$, $e_r$, and $\p_r$ as the energies and pressure of a polytropic EOS.

From \cref{eq:gamma-bound} we set $\gamma$ as
\begin{equation}\label{eq:gamma-max}
 \gamma>\max_{0\leq Y_{1\leq \alpha\leq n_s}\leq1}\gamma({\bf Y}) = \frac{5}{3},
\end{equation}
\noindent which constitutes the subcharacteristic condition for \cref{eq:relax-nrj-HRM-sys} to relax to an equilibrium as $\epsilon\downarrow0$ \cite{coquel_perthame_98}. The set of states for \cref{eq:relax-nrj-HRM-sys} is 
\begin{equation}\label{eq:relax-set-of-states}
 \Omega^r=\big\{{\bf w}\in\mathbb{R}^{n_s+n_d+d+2}:\; \rho_{1\leq\alpha\leq n_s}>0, {\bf v}\in\mathbb{R}^d, e_r>0, e_{1\leq\beta\leq n_d}^v>0, e_s>0\big\}. 
\end{equation}

Let ${\bf w}=\lim_{\epsilon\downarrow0}{\bf w}^\epsilon$, in this limit, one formally recovers \cref{eq:HRM_PDEs-a} with
\begin{equation}\label{eq:maxwellian-equ}
 {\bf u} = {\cal L}{\bf w}, \quad {\bf w} = {\cal M}({\bf w}), \quad {\bf f}({\bf u}) = {\cal L}{\bf g}({\cal P}({\bf u})),
\end{equation}

\noindent with the operators ${\cal L}:\Omega^r\rightarrow\Omega^a$ and ${\cal P}:\Omega^a\rightarrow\Omega^r$ defined by
\begin{subequations}\label{eq:relax-L-P-operators}
\begin{align}
 %{\cal L}:\Omega^r\ni{\bf w}&\mapsto{\cal L}{\bf w}=\big(w_1,\dots,w_{n_s+d},w_{n_s+d+1}\!+\!w_{n_s+n_d+d+2},w_{n_s+d+2},\dots,w_{n_s+n_d+d+1}\big)^\top \;\in\; \Omega^a \\
 {\cal L}{\bf w}&=\big(\rhob^\top, \rho{\bf v}^\top, \rho E_r+\rho e_s + \rho h_0 + \rho e_v, \rho {\bf e}_v^\top\big)^\top,  \\
 {\cal P}({\bf u})&=\big(\rhob^\top, \rho{\bf v}^\top, \rho e_r(\rho,\mathrm{p})+\rho e_c, \rho {\bf e}_v^\top,\rho\big(e_t-e_r(\rho,\mathrm{p})\big)\big)^\top,
\end{align}
\end{subequations}
\noindent with $e_r(\rho,\mathrm{p}_r)$ defined from \cref{eq:EOS-relax}, $\rho=\sum_{\alpha=1}^{n_s}\rho_\alpha$, and $\mathrm{p}=\mathrm{p}({\bf Y},\rho,e_t)$ defined from \cref{eq:mixture_eos}. The equilibrium \cref{eq:maxwellian-equ} corresponds to
\begin{equation}\label{eq:maxwellian-equ-hrm}
 E = E_r + e_s + h_0 + e_v, \quad e_t = e_r + e_s, \quad e_s = F({\bf Y},e_r):=\frac{\gamma-\gamma({\bf Y})}{\gamma({\bf Y})-1}e_r,
\end{equation}
\noindent where the expression for $F$ follows from the consistency relation on the pressure: 
\begin{equation}\label{eq:maxwellian-equ-hrm-press}
 \mathrm{p}\big({\bf Y},\rho,e_r\\+F({\bf Y},e_r)\big)=\mathrm{p}_r(\rho,e_r) \; \overset{\cref{eq:mixture_eos}}{\underset{\cref{eq:EOS-relax}}{\Leftrightarrow}} \; \big(\gamma({\bf Y})-1\big)\rho\big(e_r+F({\bf Y},e_r)\big) = (\gamma-1)\rho e_r.
\end{equation}

%\red{Finally note from \cref{eq:cons_law_mixture_density} one may use either $\rhob$, or $(\rho_1,\dots,\rho_{n_s-1},\rho)^\top$ as density variables.} %This observation will be used for convenience .

%
\subsection{Entropy}\label{sec:nrj_relax_entropy}

Let define the convex function
%note that \cref{eq:relax-nrj-HRM-sys} admits an entropy inequality for the following convex entropy associated with \cref{eq:EOS-relax} \cite{coquel_perthame_98}:
\begin{equation}\label{eq:entropy-Sr}
 \mathrm{s}_r(\tau,e_r) = -(\tau^{\gamma-1} e_r)^{\frac{1}{\gamma}},
\end{equation}

\noindent with $\tau=\tfrac{1}{\rho}$ the covolume of the mixture, and further introduce
\begin{subequations}\label{eq:functions-zeta-R-E}
\begin{align}
 &\zeta(\Yb,\tau,e_r,e_s,{\bf e}_v) = -\mathrm{s}\Big({\bf Y}(\Yb),{\cal T}\big(\Yb,\mathrm{s}_r(\tau,e_r),e_s\big),{\cal E}(\Yb,e_s)+e_s,{\bf e}_v\Big), \label{eq:functions-zeta-R-E-a} \\ 
&{\cal E}(\Yb,e_s) = \tfrac{\gamma({\bf Y})-1}{\gamma-\gamma({\bf Y})}e_s, \;\;
{\cal T}(\Yb,\mathrm{s}_r,e_s) = \big(\tfrac{\gamma-\gamma({\bf Y})}{\gamma({\bf Y})-1}\tfrac{(-\mathrm{s}_r)^\gamma}{e_s}\big)^{\frac{1}{\gamma-1}}, \;\;
{\bf Y}={\bf Y}(\Yb),  \label{eq:functions-zeta-R-E-b}
\end{align}
\end{subequations}
\noindent where $\mathrm{s}$ is the mixture entropy \cref{eq:mixture_entropy} for \cref{eq:HRM_PDEs-a} and 
\begin{equation}\label{eq:mapping_Y_Yb}
 \Yb=(Y_1,\dots,Y_{n_s-1})^\top
\end{equation}

\noindent the vector of the mass fractions of the $n_s-1$ first species. This particular change of variables will be used only in the proof of \cref{th:var-principle} below where we will clarify the choice for $n_s$. Note that the mapping between ${\bf Y}={\bf Y}(\Yb)$ is obviously one-to-one from \cref{eq:saturation_cond,eq:cons_law_mixture_density} which is always satisfied for ${\bf w}$ in $\Omega^r$ in \cref{eq:relax-set-of-states}, so we may adopt equivalently the notations ${\bf Y}$ or $\Yb$ for the sake of clarity and write
\begin{equation}\label{eq:def2_r}
  r({\bf Y}) = r(\Yb) = r_{n_s} + \sum_{\alpha=1}^{n_s-1} Y_\alpha(r_\alpha-r_{n_s}). %\; C_{v_t}({\bf Y}) = C_{v_t}(\Yb) = C_{v_{n_s}}^t + \sum_{\alpha=1}^{n_s-1} Y_\alpha(C_{v_\alpha}^t-C_{v_{n_s}}^t).
\end{equation}

In \cref{eq:functions-zeta-R-E-b}, the function ${\cal E}$ solves $e_s=F({\bf Y},e_r)$ for $e_r$ with $F$ defined in \cref{eq:maxwellian-equ-hrm}, while ${\cal T}$ solves $\mathrm{s}_r=\mathrm{s}_r(\tau,{\cal E}(\Yb,e_s))$ for $\tau$ through \cref{eq:entropy-Sr}. Using \cref{eq:mixture_entropy,eq:functions-zeta-R-E}, we easily obtain
\begin{subequations}\label{eq:zeta-delta-zeta}
\begin{align}
 \zeta(\Yb,\tau,e_r,e_s,{\bf e}_v) &= -\mathrm{s}\big({\bf Y}(\Yb),\tau,e_r+e_s,{\bf e}_v\big) + \varsigma(\Yb,e_r,e_s), \\ \varsigma(\Yb,e_r,e_s) &= C_{v_t}(\Yb)\ln\Big(\tfrac{\gamma-\gamma(\Yb)}{\gamma-1}\tfrac{e_r+e_s}{e_s}\big(\tfrac{\gamma(\Yb)-1}{\gamma-\gamma(\Yb)}\tfrac{e_s}{e_r}\big)^{\frac{\gamma(\Yb)-1}{\gamma-1}}\Big),
\end{align}
\end{subequations}
\noindent with partial derivatives
\begin{equation}
 \partial_\tau\zeta=-\tfrac{r(\Yb)}{\tau}, \quad \partial_{e_r}\zeta = -\tfrac{r(\Yb)}{(\gamma-1)e_r}, \quad \partial_{e_s}\zeta = \tfrac{\gamma(\Yb)-\gamma}{\gamma-1}\tfrac{C_{v_t}(\Yb)}{e_s}.
\label{eq:part_deriv_zeta}
\end{equation}

Likewise, the mapping ${\bf w}\mapsto(\Yb,\tau,e_r,e_s,{\bf e}_v)$ is surjective in $\Omega^r$, so we may rewrite $\zeta=\zeta({\bf w})$ as a function of the arguments in \cref{eq:functions-zeta-R-E-a}. This change of variables is also motivated by the following result which will be used to prove convexity of the entropy in \cref{th:entropy-convexity}.

\begin{lemma}\label{th:equiv_convexity}
 Given twice differentiable functions $f({\bf w})=\rho g(\Yb,\tau,e_r,e_s,{\bf e}_v)$, $f$ is strictly convex iff. $g$ is strictly convex in $\Omega^r$.
\end{lemma}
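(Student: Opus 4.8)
The statement concerns the relation between convexity of $f({\bf w}) = \rho\, g(\Yb,\tau,e_r,e_s,{\bf e}_v)$ and convexity of $g$ itself, where ${\bf w}$ ranges over $\Omega^r$. The natural strategy is to exploit the fact, already visible in the structure of the problem (cf. the conservation law \cref{eq:cons_law_mixture_density} and the decoupling it induces), that the map ${\bf w}\mapsto(\rho,\rho{\bf v},\rho E_r,\rho{\bf e}_v,\rho e_s)$ combined with the reduced variables $(\Yb,\tau,e_r,e_s,{\bf e}_v)$ is governed by a change of variables whose essential part is the ``conservative-to-primitive'' style map $(\rho_1,\dots,\rho_{n_s})\mapsto(\rho,\Yb)$ and $(\rho E_r,\rho{\bf e}_v,\rho e_s,\rho{\bf v})\mapsto(\tau,e_r,e_s,{\bf e}_v)$ after dividing by $\rho$. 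I would first record this change of variables explicitly and note that $\rho>0$ throughout $\Omega^r$, so $f$ is a homogeneous-degree-one multiple (in the extensive block) of a function $g$ of the intensive variables. The key algebraic identity to establish is a congruence relation for the Hessians, exactly in the spirit of the trick used in the proof of \cref{th:convexity_entropy_ms_euler} above.

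The plan is to compute ${\boldsymbol{\cal H}}_f = \partial^2 f/\partial{\bf w}^2$ by the chain rule through the intermediate variables ${\bf V}=(\rho,\Yb,\tau,e_r,e_s,{\bf e}_v,{\bf v})$ (or a convenient equivalent). Writing $J = \partial{\bf V}/\partial{\bf w}$ for the Jacobian of the (smooth, invertible on $\Omega^r$) change of variables, one has
\begin{equation*}
 J^\top {\boldsymbol{\cal H}}_f\, J = {\boldsymbol{\cal H}}_{\tilde f} + (\text{first-order terms in }\nabla_{\bf V} f)\cdot(\partial^2{\bf w}/\partial{\bf V}^2),
\end{equation*}
where $\tilde f({\bf V}) = \rho\, g(\Yb,\tau,e_r,e_s,{\bf e}_v)$. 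Because $\tilde f$ depends on $\rho$ only through the linear prefactor, $\partial_\rho \tilde f = g$ and $\partial^2_\rho\tilde f = 0$, while the mixed block $\partial^2\tilde f/(\partial\rho\,\partial(\text{rest}))$ equals $\nabla g$ and the lower-right block equals $\rho\,{\boldsymbol{\cal H}}_g$. One then arranges a second congruence (a block elimination of the $\rho$-row/column, i.e. completing the square in the $\rho$ direction) that produces a block-diagonal matrix whose nontrivial block is $\rho\,{\boldsymbol{\cal H}}_g$ plus possibly a harmless positive scalar block coming from the $\rho$ direction. Since $\rho>0$, positive definiteness of this congruent matrix is equivalent to positive definiteness of ${\boldsymbol{\cal H}}_g$, and since congruence preserves inertia and the transformation matrices are invertible on $\Omega^r$, this is equivalent to positive definiteness of ${\boldsymbol{\cal H}}_f$. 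This yields both implications at once.

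I expect the main obstacle to be purely bookkeeping: keeping track of the first-order correction terms $\nabla_{\bf V} f \cdot \partial^2{\bf w}/\partial{\bf V}^2$ arising because the change of variables ${\bf w}\leftrightarrow{\bf V}$ is nonlinear (the velocity block contributes $e_c=\tfrac12{\bf v}\cdot{\bf v}$ terms and the $\rho$-weighting is nonlinear in the $\rho_\alpha$). The clean way to avoid drowning in these is to observe that $f$ being convex as a function of ${\bf w}$ is invariant under \emph{affine} changes of variable, and to split the map into (a) the linear extraction of $\rho$ and $\rho{\bf v}$, and (b) the genuinely nonlinear but \emph{fibered} rescaling by $1/\rho$; on each fiber $\{\rho=\text{const}\}$ the map is affine, so the correction terms collapse. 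Alternatively one can invoke directly the standard fact that for $\rho>0$ the perspective function $(\rho,{\bf x})\mapsto\rho\,g({\bf x}/\rho)$ is convex iff $g$ is convex, after identifying the extensive variables with ${\bf x}$ and noting $e_r = E_r - \tfrac12|{\bf v}|^2$ with $\rho{\bf v}, \rho E_r$ extensive; strict convexity is handled the same way since the perspective transform preserves strictness when $\rho$ is bounded away from $0$, which holds locally on $\Omega^r$. A short remark reconciling the two viewpoints suffices to close the argument.
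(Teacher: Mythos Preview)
Your proposal is correct. Your ``alternative'' route at the end---invoking the perspective function $(\rho,{\bf x})\mapsto\rho\,g({\bf x}/\rho)$ together with the classical reduction from $(E_r,{\bf v})$ to $e_r=E_r-\tfrac12{\bf v}\cdot{\bf v}$---is exactly the paper's own proof, which dispatches the lemma in three lines by chaining (i) invariance of convexity under the linear map $(\rho_1,\dots,\rho_{n_s})\to(\rho_1,\dots,\rho_{n_s-1},\rho)$, (ii) the standard perspective identity $f_1(\rho,{\bf y})=\rho f_2(1/\rho,{\bf y}/\rho)$, and (iii) the Godlewski--Raviart result on the kinetic-energy change of variables. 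Your primary Hessian-congruence computation would also succeed but is precisely the bookkeeping you yourself flag as the main obstacle; the paper sidesteps it entirely by invoking these three off-the-shelf facts. The cleanest write-up is therefore just your final paragraph.
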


\begin{proof}
 Convexity being invariant under linear maps, the convexity of $f$ is equivalent to that of $f({\bf w})=f_1(\rho_1,\dots,\rho_{n_s-1},\rho,\rho{\bf v}^\top,\rho E_r,\rho {\bf e}_v, \rho e_s)$. Then, it is a classical matter that the convexity of $f_1$ and $f_2$ with $f_1(\rho,{\bf y})=\rho f_2\big(\tfrac{1}{\rho},\tfrac{1}{\rho}{\bf y}\big)$ are equivalent. Since $E_r=e_r-\tfrac{1}{2}{\bf v}\cdot{\bf v}$, the convexity of $f$ is equivalent to the convexity of $\rho f_2(\Yb,\tau,{\bf v},E_r,e_s,{\bf e}_v)=\rho g(\Yb,\tau,e_r,e_s,{\bf e}_v)$ \cite[chap. 2]{godlewski-raviart}. \qed
\end{proof}

\begin{lemma}\label{th:entropy-convexity}
 Under the assumption \cref{eq:gamma-max}, the function $\rho\zeta({\bf w})$ defined by \cref{eq:functions-zeta-R-E} is a strictly convex entropy in $\Omega^r$ for \cref{eq:relax-nrj-HRM-sys}.
\end{lemma}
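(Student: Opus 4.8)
The statement asks for two properties of $\rho\zeta$: (a) strict convexity on $\Omega^r$, and (b) that $\big(\rho\zeta,\rho\zeta{\bf v}\big)$ is a compatible entropy--entropy flux pair for \cref{eq:relax-nrj-HRM-sys}$_{\epsilon\rightarrow\infty}$ which is moreover dissipated by the relaxation source in \cref{eq:relax-nrj-HRM-sys}. For (b) the plan is to argue by transport. Thanks to \cref{eq:cons_law_mixture_density}, the sub-block $(\rhob,\rho{\bf v},\rho E_r)$ of \cref{eq:relax-nrj-HRM-sys}$_{\epsilon\rightarrow\infty}$ is exactly the polytropic multicomponent Euler system with EOS \cref{eq:EOS-relax}, for which $\rho\,\mathrm{s}_r(\tau,e_r)$ obeys a companion conservation law along smooth solutions (any smooth function of $\tau^{\gamma-1}e_r$ being convected), while $\rho e_s$ and the $\rho e_\beta^v$ are passively convected and $Y_\alpha=\rho_\alpha/\rho$ satisfies $\partial_tY_\alpha+{\bf v}\cdot\nabla Y_\alpha=0$. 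Hence $\mathrm{s}_r$, $e_s$, ${\bf e}_v$ and ${\bf Y}$ are all transported by the flow, and since by \cref{eq:functions-zeta-R-E-a} the function $\zeta$ depends on ${\bf w}$ only through $\big({\bf Y},\mathrm{s}_r(\tau,e_r),e_s,{\bf e}_v\big)$, so is $\zeta$; a short computation then gives $\partial_t(\rho\zeta)+\nabla\cdot(\rho\zeta{\bf v})=0$ for smooth solutions, i.e. the compatibility relation $\big(\rho\zeta\big)'({\bf w})^\top{\bf g}'_i({\bf w})=\big(\rho\zeta\,v_i\big)'({\bf w})^\top$ with entropy flux ${\bf q}_\zeta({\bf w})=\rho\zeta({\bf w}){\bf v}$. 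Writing $\T_r:=e_r/C_{v,r}(\Yb)$, $\T_s:=e_s/C_{v,s}(\Yb)$ with $C_{v,r}(\Yb):=\tfrac{r(\Yb)}{\gamma-1}$ and $C_{v,s}(\Yb):=C_{v_t}(\Yb)-C_{v,r}(\Yb)$, the derivatives \cref{eq:part_deriv_zeta} read $\partial_{\rho E_r}(\rho\zeta)=-1/\T_r$ and $\partial_{\rho e_s}(\rho\zeta)=-1/\T_s$, and using $F(\Yb,e_r)=C_{v,s}(\Yb)\T_r$ from \cref{eq:maxwellian-equ-hrm} one finds $\big(\rho\zeta\big)'({\bf w})^\top\big({\bf w}-{\cal M}({\bf w})\big)=\rho\,C_{v,s}(\Yb)\tfrac{(\T_r-\T_s)^2}{\T_r\T_s}\geq0$, which vanishes exactly at the equilibrium \cref{eq:maxwellian-equ-hrm}; this step uses $C_{v,s}(\Yb)>0$ and $e_r,e_s>0$.

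For (a) I would first invoke \cref{th:equiv_convexity} to reduce to strict convexity of $\zeta$ in the variables $(\Yb,\tau,e_r,e_s,{\bf e}_v)$ on $\Omega^r$. Simplifying \cref{eq:functions-zeta-R-E} via \cref{eq:mixture_entropy} (equivalently, in \cref{eq:zeta-delta-zeta} the $\ln(e_r+e_s)$ terms coming from $-\mathrm{s}$ and from $\varsigma$ cancel) brings $\zeta$ to the form $\zeta=-r(\Yb)\ln\tau-C_{v,r}(\Yb)\ln e_r-C_{v,s}(\Yb)\ln e_s-\mathrm{s}_v(\Yb,{\bf e}_v)$ plus a function of $\Yb$ alone, consistently with \cref{eq:part_deriv_zeta}. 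Here $C_{v,r}(\Yb)=\sum_\alpha Y_\alpha\tfrac{r_\alpha}{\gamma-1}$ and $C_{v,s}(\Yb)=\sum_\alpha Y_\alpha\big(C_{v_\alpha}^t-\tfrac{r_\alpha}{\gamma-1}\big)$ are affine in $\Yb$ with positive coefficients, the positivity of the second being exactly the subcharacteristic condition, $C_{v,s}(\Yb)>0\Leftrightarrow(\gamma-1)C_{v_t}(\Yb)>r(\Yb)\Leftrightarrow\gamma>\gamma(\Yb)$, which holds under \cref{eq:gamma-max} uniformly in the composition since $\gamma>5/3\geq\gamma(\Yb)$ by \cref{eq:gamma-bound}. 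Consequently, up to an additive multiple of $\rho$ by a function of $\Yb$, $\rho\zeta$ has exactly the structure of the physical mixture entropy \cref{eq:phys_entropy_pair}--\cref{eq:def_partial_entropy} of a fictitious mixture in which the translation--rotation mode of species $\alpha$ is replaced by two reservoirs of heat capacities $\tfrac{r_\alpha}{\gamma-1}$ (carrying the relaxation pressure $\mathrm{p}_r$ at temperature $\T_r$) and $C_{v_\alpha}^t-\tfrac{r_\alpha}{\gamma-1}$ (inert, at temperature $\T_s$), the vibrational reservoirs being unchanged. Strict convexity then follows from the congruence argument of \cref{th:convexity_entropy_ms_euler}, carried over with the obvious changes: with the one-to-one change of variables ${\bf Z}_r({\bf w})=(\rhob^\top,{\bf v}^\top,\T_r,\T_s,{\bf e}_v^\top)^\top$ one checks, as in \cref{eq:congruence_hessian}, that $\tfrac{\partial{\bf w}}{\partial{\bf Z}_r}^\top\boldsymbol{\cal H}_{\rho\zeta}\tfrac{\partial{\bf w}}{\partial{\bf Z}_r}$ is diagonal with entries proportional to $(r_\alpha\tau_\alpha)_\alpha$, $(\rho/\T_r)_{i\leq d}$, $\rho\,C_{v,r}(\Yb)/\T_r^2$, $\rho\,C_{v,s}(\Yb)/\T_s^2$ and $\big(-\rho_\beta\mathrm{s}_\beta^{v''}(e_\beta^v)\big)_\beta$, all strictly positive on $\Omega^r$ since $\tau_\alpha,\T_r,\T_s>0$, $r_\alpha,C_{v,r}(\Yb),C_{v,s}(\Yb)>0$, and $\mathrm{s}_\beta^{v''}<0$ from \cref{eq:def_partial_entropy-b}; hence $\boldsymbol{\cal H}_{\rho\zeta}$ is positive definite. (Alternatively one computes $\boldsymbol{\cal H}_\zeta$ directly from \cref{eq:part_deriv_zeta} and runs the same congruence in the $(\Yb,\tau,e_r,e_s,{\bf e}_v)$ variables.)

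The hard part is the strict convexity in (a): a priori the coupling between $\Yb$ and the energy/covolume variables could spoil positive definiteness, and this is precisely what the change of variables ${\bf Z}_r$ (as $\mathbf Z$ in \cref{th:convexity_entropy_ms_euler}) is designed to disentangle. Among the resulting diagonal entries, the only one whose sign is not automatic is the one carried by the relaxed mode $e_s$: it is positive exactly because $C_{v,s}(\Yb)>0$, i.e. because of the subcharacteristic assumption \cref{eq:gamma-max} — equivalently, after the cancellation of the $\ln(e_r+e_s)$ terms, $\partial_{e_s}^2\zeta=\tfrac{\gamma-\gamma(\Yb)}{\gamma-1}\tfrac{C_{v_t}(\Yb)}{e_s^2}>0$ iff $\gamma>\gamma(\Yb)$. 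This is also the single place where \cref{eq:gamma-max} enters, both here and in the source-dissipation identity of part (b).
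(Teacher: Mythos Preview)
Your argument is correct and takes a genuinely different route from the paper's. The paper (in \cref{sec:appendix_relax_entropy_convexity}) works directly with the Hessian $\boldsymbol{\cal H}_\zeta$ in the variables $(\Yb,\tau,e_r,e_s,{\bf e}_v)$: it writes out $\boldsymbol{\cal H}_\zeta$ explicitly, completes squares in ${\bf x}^\top\boldsymbol{\cal H}_\zeta{\bf x}$, and is left with a residual quadratic form $Q({\bf x})=\tfrac{r_{n_s}}{Y_{n_s}}(\sum x_k)^2+\sum\tfrac{r_k}{Y_k}x_k^2-\tfrac{(\sum x_k\partial_kr)^2}{r(\Yb)}$ in the $\Yb$-directions whose positivity requires a further algebraic manipulation together with the ordering choice $r_{n_s}=\min_\alpha r_\alpha$ so that $\partial_kr=r_k-r_{n_s}\geq0$. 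Your approach bypasses this entirely: you recognise that, after the cancellation of the $\ln(e_r+e_s)$ terms, $\rho\zeta$ coincides with the entropy $-\rho\s^{\mathrm{fict}}$ of a fictitious two-reservoir mixture (per-species heat capacities $r_\alpha/(\gamma-1)$ and $C_{v_\alpha}^t-r_\alpha/(\gamma-1)$, temperatures $\T_r,\T_s$) up to an additive term, and then run the diagonalising congruence of \cref{th:convexity_entropy_ms_euler} with the augmented change of variables ${\bf Z}_r=(\rhob,{\bf v},\T_r,\T_s,{\bf e}_v)$. This is more conceptual, reuses \cref{th:convexity_entropy_ms_euler} essentially verbatim, and needs no special ordering of the $r_\alpha$; the paper's route is fully self-contained and does not rely on identifying the fictitious mixture.

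One point should be tightened: the phrase ``up to an additive multiple of $\rho$ by a function of $\Yb$'' is not by itself sufficient, since $\rho\,g(\Yb)$ is $1$-homogeneous but not linear in $\rhob$ for general $g$, and a nonlinear residual would modify the $\rhob$-block of the congruent matrix. For your claimed diagonal entries $(r_\alpha\tau_\alpha)_\alpha$ to hold you need the residual to be \emph{affine} in ${\bf Y}$ so that $\rho$ times it is linear in $\rhob$ and contributes nothing to the Hessian. This is indeed the case: writing $C_{v,r,\alpha}=r_\alpha/(\gamma-1)$ and $C_{v,s,\alpha}=C_{v_\alpha}^t-r_\alpha/(\gamma-1)$, a short computation gives
\[
\zeta-(-\s^{\mathrm{fict}})=\sum_{\alpha=1}^{n_s}Y_\alpha\big[C_{v,r,\alpha}\ln C_{v,r,\alpha}+C_{v,s,\alpha}\ln C_{v,s,\alpha}-C_{v_\alpha}^t\ln C_{v_\alpha}^t\big],
\]
a linear combination of the $Y_\alpha$ with species-wise constants. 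Stating this explicitly closes the gap. Your part (b) (compatibility of the pair $(\rho\zeta,\rho\zeta{\bf v})$ and nonnegativity of the source dissipation) is correct and goes beyond what the appendix establishes, which treats only strict convexity.
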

\begin{proof}
%This proof follows the lines of the work in \cite[Lemma 4.1]{renac2020entropy} and is moved to \cref{sec:appendix_relax_entropy_convexity}. \qed
This proof has been moved to \cref{sec:appendix_relax_entropy_convexity} for the sake of clarity. \qed
\end{proof}

\subsection{Properties of the relaxation system close to equilibrium}\label{sec:nrj_relax_var_pcpe}

We first prove the following variational principle which states that the equilibrium \cref{eq:maxwellian-equ-hrm} minimizes the entropy $\zeta$ and constitutes an analogue to the Gibbs Lemma in kinetic theory.

\begin{lemma}\label{th:var-principle}
 Under the assumption \cref{eq:gamma-max}, the function $\zeta$ defined by \cref{eq:functions-zeta-R-E} satisfies the following variational principle:
\begin{multline}\label{eq:var-principle}
 -\mathrm{s}({\bf Y},\tau,e_t,{\bf e}_v) =\!\!\! \min_{e_r+e_s=e_t}\!\!\big\{\zeta\big(\Yb({\bf Y}),\tau,e_r,e_s,{\bf e}_v\big): \\ 0<Y_{1\leq\alpha\leq n_s}\leq1, \tau>0, e_r>0, e_s>0,e_{1\leq\beta\leq n_d}^v>0\big\},
% -\mathrm{s}({\bf u}) = \min_{e_r+e_s=e_t}\big\{\zeta({\bf w}): \; {\bf w}\in\Omega^r\big\},
\end{multline}

\noindent and the minimum is reached at a unique global equilibrium which is solution to \cref{eq:maxwellian-equ-hrm}.
\end{lemma}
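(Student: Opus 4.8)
The plan is to minimize $\zeta\big(\Yb({\bf Y}),\tau,e_r,e_s,{\bf e}_v\big)$ over the slice $\{e_r+e_s=e_t\}$ with the other variables held fixed, and to show that the minimizer is precisely the equilibrium state \cref{eq:maxwellian-equ-hrm} with value $-\mathrm{s}({\bf Y},\tau,e_t,{\bf e}_v)$. First I would parametrize the constraint set by a single variable, say $e_r\in(0,e_t)$ with $e_s=e_t-e_r$, and define $\phi(e_r):=\zeta\big(\Yb,\tau,e_r,e_t-e_r,{\bf e}_v\big)$. Using \cref{eq:zeta-delta-zeta}, the term $-\mathrm{s}\big({\bf Y},\tau,e_r+e_s,{\bf e}_v\big)=-\mathrm{s}({\bf Y},\tau,e_t,{\bf e}_v)$ is constant on the slice, so minimizing $\phi$ reduces to minimizing $\varsigma(\Yb,e_r,e_t-e_r)$ alone. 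This is the key simplification: the potentially delicate entropy $\mathrm{s}$ drops out of the optimization entirely.

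Next I would compute $\phi'(e_r)=\partial_{e_r}\zeta-\partial_{e_s}\zeta$ using the explicit partial derivatives \cref{eq:part_deriv_zeta}, giving
\begin{equation*}
 \phi'(e_r) = -\frac{r(\Yb)}{(\gamma-1)e_r} - \frac{\gamma(\Yb)-\gamma}{\gamma-1}\frac{C_{v_t}(\Yb)}{e_s} = -\frac{r(\Yb)}{(\gamma-1)e_r} + \frac{\gamma-\gamma(\Yb)}{\gamma-1}\frac{C_{v_t}(\Yb)}{e_t-e_r}.
\end{equation*}
Setting $\phi'(e_r)=0$ and using $r(\Yb)=(\gamma(\Yb)-1)C_{v_t}(\Yb)$ from \cref{eq:equiv-gamma}, one solves for the unique critical point $e_r^\star$, which rearranges to $e_s^\star=\tfrac{\gamma-\gamma(\Yb)}{\gamma(\Yb)-1}e_r^\star=F(\Yb,e_r^\star)$ — exactly the equilibrium relation in \cref{eq:maxwellian-equ-hrm}. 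To confirm this is a global minimum I would note that under \cref{eq:gamma-max} we have $\gamma>\gamma(\Yb)$, so both coefficients $\tfrac{r(\Yb)}{\gamma-1}$ and $\tfrac{(\gamma-\gamma(\Yb))C_{v_t}(\Yb)}{\gamma-1}$ are strictly positive; hence $\phi'$ is strictly increasing on $(0,e_t)$ (one term decreasing in magnitude, the other increasing), $\phi'\to-\infty$ as $e_r\downarrow0$ and $\phi'\to+\infty$ as $e_r\uparrow e_t$, so $e_r^\star$ is the unique interior minimizer and the minimum is attained (not merely approached on the open set). Alternatively, strict convexity of $\phi$ follows directly from strict convexity of $\rho\zeta$ (\cref{th:entropy-convexity}) restricted to the affine slice, via \cref{th:equiv_convexity}.

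Finally I would evaluate $\phi$ at $e_r^\star$: substituting the equilibrium relations into $\varsigma$ shows $\varsigma(\Yb,e_r^\star,e_s^\star)=0$ (the argument of the logarithm becomes $1$ after using $e_r^\star+e_s^\star=e_t$ and $\tfrac{e_s^\star}{e_r^\star}=\tfrac{\gamma-\gamma(\Yb)}{\gamma(\Yb)-1}$), so $\phi(e_r^\star)=-\mathrm{s}({\bf Y},\tau,e_t,{\bf e}_v)$, which is \cref{eq:var-principle}. Uniqueness of the global equilibrium over all admissible $(\Yb,\tau,e_r,e_s,{\bf e}_v)$ with $e_r+e_s=e_t$ then follows since for each fixed $(\Yb,\tau,{\bf e}_v)$ the split $(e_r,e_s)$ is uniquely determined, and these are exactly the solutions of \cref{eq:maxwellian-equ-hrm}. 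I expect the main obstacle to be purely bookkeeping: keeping the algebra consistent between the $\gamma(\Yb)$-, $r(\Yb)$-, and $C_{v_t}(\Yb)$-notations when verifying that $\varsigma$ vanishes at equilibrium and that the critical point coincides with $F(\Yb,e_r^\star)$; the conceptual content — that $\mathrm{s}$ cancels on the slice and \cref{eq:gamma-max} forces strict convexity — is immediate from the preparatory computations \cref{eq:zeta-delta-zeta}--\cref{eq:part_deriv_zeta}.
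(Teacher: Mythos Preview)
Your proposal is correct and follows essentially the same approach as the paper: both reduce the constrained minimization to showing that $\varsigma\geq0$ on the slice $e_r+e_s=e_t$ with equality precisely at the equilibrium \cref{eq:maxwellian-equ-hrm}, invoking strict convexity (\cref{th:entropy-convexity}) for uniqueness. The only cosmetic difference is that the paper reparametrizes by $x=e_s/e_r$ and studies $f(\alpha,x)=\tfrac{(1-\alpha)(1+x)}{x}\big(\tfrac{\alpha x}{1-\alpha}\big)^\alpha$ with $\alpha=\tfrac{\gamma(\Yb)-1}{\gamma-1}$, whereas you differentiate $\phi(e_r)$ directly via \cref{eq:part_deriv_zeta}; the critical-point computation and the verification $\varsigma=0$ at equilibrium are the same.
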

\begin{proof}
Note that \cref{eq:var-principle} corresponds to the minimization of a strictly convex function (see \cref{th:entropy-convexity}) in the convex set \cref{eq:relax-set-of-states} under the linear constraint $e_r+e_s=e_t$, so we only need to find a local minimum for $\zeta$ which satisfies \cref{eq:var-principle}. We further prove that $\varsigma$ in \cref{eq:zeta-delta-zeta} is positive and vanishes at equilibrium \cref{eq:maxwellian-equ-hrm} that constitutes a global minimum. Let us rewrite $\varsigma$ as $C_{v_t}(\Yb)\ln\big(f(\alpha,x)\big)$ with $f(\alpha,x)=\tfrac{(1-\alpha)(1+x)}{x}\big(\tfrac{\alpha x}{1-\alpha}\big)^\alpha$, $x=\tfrac{e_s}{e_r}>0$, and $\alpha=\tfrac{\gamma(\Yb)-1}{\gamma-1}$ in $(0,1)$ from \cref{eq:gamma-max}. We have $\partial_xf(\alpha,x)=\tfrac{1-\alpha}{x^2}(\alpha x+\alpha-1)$, thus $\partial_xf(\alpha,x)<0$ for $0<x<x_{min}:=\tfrac{1-\alpha}{\alpha}$, $\partial_xf(\alpha,x)>0$ for $x>x_{min}$, and $\partial_xf(\alpha,x_{min})=0$. Since $f(\alpha,x_{min})=1$, $\varsigma$ vanishes at the global minimum $\alpha x_{min}=1-\alpha\Leftrightarrow \tfrac{\gamma(\Yb)-1}{\gamma-1}\tfrac{e_s}{e_r}=1-\tfrac{\gamma(\Yb)-1}{\gamma-1}$ which indeed corresponds to the equilibrium \cref{eq:maxwellian-equ-hrm}: $e_s=F(\Yb,e_r)$. \qed
\end{proof}

The next result concerns the spatially homogeneous system in \cref{eq:relax-nrj-HRM-sys}:
\begin{equation}\label{eq:space-homogeneous-relax-sys}
  \partial_t{\bf w}^\epsilon = -\frac{1}{\epsilon}\big({\bf w}^\epsilon-{\cal M}({\bf w}^\epsilon)\big),
\end{equation}

\noindent and is analogue to the H-theorem for kinetic equations. The result below shows that in the limit of instantaneous relaxation $\epsilon\downarrow0$ the solution to \cref{eq:relax-nrj-HRM-sys} will converge to the equilibrium \cref{eq:maxwellian-equ-hrm}.

\begin{theorem}\label{th:H-theorem}
The vector of variables ${\bf u}=(\rhob^\top,\rho{\bf v}^\top,\rho E,\rho{\bf e}_v^\top)^\top$ with $E=E_r + e_s + h_0 + e_v$ is a constant of \cref{eq:space-homogeneous-relax-sys} and the entropy $\rho\zeta$ decreases in time and reaches a unique minimum which corresponds to the equilibrium \cref{eq:maxwellian-equ-hrm}. This equilibrium is stable in the sense of Lyapunov.
\end{theorem}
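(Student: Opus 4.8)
The plan is to treat \cref{eq:space-homogeneous-relax-sys} as an ODE system in time and exploit the structure already established: the conservation of $\mathbf{u}={\cal L}\mathbf{w}$, the strict convexity of $\rho\zeta$ from \cref{th:entropy-convexity}, and the variational characterization of the equilibrium from \cref{th:var-principle}. First I would verify that $\mathbf{u}={\cal L}\mathbf{w}^\epsilon$ is indeed constant: from the explicit form of $\mathbf{w}-{\cal M}(\mathbf{w})$, only the two components $\rho E_r$ and $\rho e_s$ carry nonzero source terms, and these are exactly opposite, namely $\pm\rho(F(\mathbf{Y},e_r)-e_s)$; since ${\cal L}$ sums $\rho E_r+\rho e_s$ into $\rho E$ (plus the frozen contributions $\rho h_0+\rho e_v$, which depend only on $\rhob$ and $\rho\mathbf{e}_v$ and are themselves unsourced), the combination is annihilated. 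In particular $\rhob$, $\rho\mathbf{v}$, $\rho\mathbf{e}_v$, hence $\tau$, $\mathbf{Y}$, $h_0$, $e_v$ and $e_t=E-h_0-e_v-e_c$, are all constant along trajectories, so the dynamics reduces to the scalar ODE for $e_s$ (equivalently for the splitting $e_r+e_s=e_t$), driven by $\dot e_s=\tfrac1\epsilon(F(\mathbf{Y},e_r)-e_s)$.

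Next I would compute the time derivative of the entropy $\rho\zeta(\mathbf{w}^\epsilon)$ along solutions. Since $\rho$ is constant it suffices to differentiate $\zeta$, and because all arguments but $(e_r,e_s)$ are frozen, $\tfrac{d}{dt}\zeta=\partial_{e_r}\zeta\,\dot e_r+\partial_{e_s}\zeta\,\dot e_s=(\partial_{e_s}\zeta-\partial_{e_r}\zeta)\dot e_s$ using $\dot e_r=-\dot e_s$. Plugging in the partial derivatives from \cref{eq:part_deriv_zeta}, one gets
\begin{equation*}
 \partial_{e_s}\zeta-\partial_{e_r}\zeta = \frac{C_{v_t}(\Yb)}{\gamma-1}\Big(\frac{\gamma(\Yb)-\gamma}{e_s}+\frac{r(\Yb)/C_{v_t}(\Yb)}{e_r}\Big) = \frac{C_{v_t}(\Yb)}{(\gamma-1)e_s e_r}\Big((\gamma(\Yb)-\gamma)e_r+(\gamma(\Yb)-1)e_s\Big),
\end{equation*}
using $r(\Yb)=(\gamma(\Yb)-1)C_{v_t}(\Yb)$. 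The bracket is, up to the positive factor $(\gamma(\Yb)-1)$, exactly $e_s-F(\mathbf{Y},e_r)$ (recall $F(\mathbf{Y},e_r)=\tfrac{\gamma-\gamma(\Yb)}{\gamma(\Yb)-1}e_r$), which is $-\epsilon\dot e_s$. Hence $\tfrac{d}{dt}(\rho\zeta)=-\epsilon\,\rho\,C_{v_t}(\Yb)(\gamma(\Yb)-1)\,\dot e_s^{\,2}/((\gamma-1)e_s e_r)\le 0$, with equality iff $\dot e_s=0$, i.e. iff $e_s=F(\mathbf{Y},e_r)$. So $\rho\zeta$ is a strict Lyapunov function; it is monotone decreasing and, being bounded below (its infimum over the constraint set is $-\mathrm{s}(\mathbf{Y},\tau,e_t,\mathbf{e}_v)$ by \cref{th:var-principle}), it converges, and the only equilibrium of the ODE is the unique minimizer of \cref{th:var-principle}, namely \cref{eq:maxwellian-equ-hrm}.

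Finally, Lyapunov stability follows from the standard argument: $\rho\zeta$ is a continuous function attaining a strict global minimum at the equilibrium $\mathbf{w}^\star$ (strict because $\rho\zeta$ is strictly convex on the convex constraint slice and the minimizer is interior, by \cref{th:entropy-convexity} and \cref{th:var-principle}) and is non-increasing along trajectories; given a neighborhood $\mathcal{N}$ of $\mathbf{w}^\star$ one chooses a sublevel set $\{\rho\zeta\le c\}$ (intersected with the affine manifold of conserved quantities through $\mathbf{w}^\star$) contained in $\mathcal{N}$, and any trajectory starting inside it cannot leave. I expect the only mildly delicate point to be bookkeeping: confirming that all the frozen variables really are constant (so that $\zeta$ genuinely depends on a single dynamical degree of freedom) and that the equilibrium lies in the open set $\Omega^r$ so that strict convexity applies at an interior point; the differential identity connecting $\partial_{e_s}\zeta-\partial_{e_r}\zeta$ to the source term $e_s-F(\mathbf{Y},e_r)$ is the computational heart, and it is short given \cref{eq:part_deriv_zeta}.
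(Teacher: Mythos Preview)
Your proof is correct and follows essentially the same approach as the paper: both establish that ${\bf u}={\cal L}{\bf w}^\epsilon$ is constant by noting the cancellation of the $\rho E_r$ and $\rho e_s$ source terms, then compute $\partial_t\zeta$ via the partial derivatives \cref{eq:part_deriv_zeta} to obtain the sign-definite expression $-\tfrac{r(\Yb)}{\epsilon(\gamma-1)e_re_s}(e_s-F)^2$ (your factor $C_{v_t}(\Yb)(\gamma(\Yb)-1)$ is of course just $r(\Yb)$), and finally invoke \cref{th:var-principle} together with the Lyapunov criterion. Your write-up is slightly more explicit about the reduction to a scalar ODE and the sublevel-set argument for stability, but the mathematical content coincides with the paper's proof.
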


\begin{proof}
From \cref{eq:space-homogeneous-relax-sys}, we directly obtain that $\rhob$, $\rho{\bf v}$, and $\rho{\bf e}_v$ are constant so $\partial_t(\rho e_v)=0$ and $\partial_t{\bf Y}=0$. Then summing the $\rho E_r$ and $\rho e_s$ equations gives $\partial_t(\rho E_r+\rho e_s)=0$ so ${\bf u}$ in \cref{th:H-theorem} is constant.

Then, for smooth solutions of \cref{eq:space-homogeneous-relax-sys} we get
\begin{align*}
 \partial_t\zeta({\bf Y}^\epsilon,\tau^\epsilon,e_r^\epsilon,e_s^\epsilon) &= -\tau^{\epsilon^2}\partial_\tau\zeta\partial_t\rho^\epsilon + \partial_{e_r}\zeta\partial_te_r^\epsilon + \partial_{e_s}\zeta\partial_te_s^\epsilon \\
 &=  \frac{1}{\epsilon}\big(e_s^\epsilon-F(\Yb^\epsilon,e_r^\epsilon)\big)(\partial_{e_r}\zeta-\partial_{e_s}\zeta)^\epsilon \\
  & \overset{\cref{eq:part_deriv_zeta}}{=} -\frac{1}{\epsilon}\frac{r(\Yb^\epsilon)}{(\gamma-1)e_r^\epsilon e_s^\epsilon}\big(e_s^\epsilon-F(\Yb^\epsilon,e_r^\epsilon)\big)^2 \leq 0,
\end{align*}
\noindent so $\partial_t\zeta\leq0$ and $\partial_t\zeta=0$ iff. $e_s^\epsilon=F(\Yb^\epsilon,e_r^\epsilon)$ which corresponds to the equilibrium \cref{eq:maxwellian-equ-hrm} which in turn corresponds to the global minimum of $\zeta$ from \cref{th:var-principle}. We therefore conclude that the system is stable by applying the Lyapunov stability criterion with the Lyapunov function ${\bf w}\mapsto\zeta({\bf w}+{\bf w}^0)-\zeta({\bf w}^0)$ where ${\bf w}^0={\cal P}({\bf u})$ corresponds to the equilibrium \cref{eq:maxwellian-equ-hrm} with ${\cal P}$ defined in \cref{eq:relax-L-P-operators}. Finally note that the partial energies are given explicitly by $e_\alpha^t=C_{v_\alpha}^te_t/C_{v_t}({\bf Y})$ which confirms that the equilibrium corresponds to a unique state. \qed
\end{proof}

The last result describes the first-order asymptotic analysis of small perturbations in the relaxation process in the neighborhood of the equilibrium \cref{eq:maxwellian-equ-hrm} by performing a formal Chapman-Enskog expansion \cite{chen_levermore_liu94}. This result extends \cite[Prop.~2.4]{coquel_perthame_98} to multicomponent flows and allows to understand the relaxation process close to equilibrium as a viscous perturbation to \cref{eq:HRM_PDEs-a} and to prove well-posedness of \cref{eq:relax-nrj-HRM-sys} and consistency with \cref{eq:HRM_PDEs-a} when $\epsilon\downarrow0$.

\begin{theorem}\label{th:chapman-enskog}
In the limit $\epsilon\downarrow0$, small perturbations to the equilibrium \cref{eq:maxwellian-equ-hrm} obey the following first order asymptotic expansion in $\epsilon$:
\begin{equation*}
 \partial_t{\bf u}^\epsilon + \nabla\cdot{\bf f}({\bf u}^\epsilon) - \nabla\cdot{\bf f}_v({\bf u}^\epsilon,\nabla{\bf u}^\epsilon)=0, \quad {\bf f}_v({\bf u}^\epsilon,\nabla{\bf u}^\epsilon) = \mu({\bf Y}^\epsilon,\rho^\epsilon,e_t^\epsilon) \begin{pmatrix} 0 \\ (\nabla\cdot{\bf v}^\epsilon){\bf I}_d \\ (\nabla\cdot{\bf v}^\epsilon){\bf v}^{\epsilon\top} \\ 0 \end{pmatrix},
\end{equation*}

\noindent with $\mu({\bf Y}^\epsilon,\rho^\epsilon,e_t^\epsilon) = \epsilon\tfrac{(\gamma-\gamma({\bf Y}^\epsilon))(\gamma({\bf Y}^\epsilon)-1)^2}{\gamma-1}\rho^\epsilon e_t^\epsilon$ positive under \cref{eq:gamma-max}.
\end{theorem}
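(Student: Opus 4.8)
The plan is to run the formal Chapman--Enskog expansion of \cite[Prop.~2.4]{coquel_perthame_98} (see also \cite{chen_levermore_liu94}) on \cref{eq:relax-nrj-HRM-sys}, the new point being that ${\bf Y}$ and ${\bf e}_v$ are passively advected. First I would set ${\bf u}^\epsilon:={\cal L}{\bf w}^\epsilon$ --- which is exact --- and expand ${\bf w}^\epsilon={\cal P}({\bf u}^\epsilon)+\epsilon{\bf w}^{(1)}+O(\epsilon^2)$; since ${\cal L}{\cal P}=\mathrm{id}$ by \cref{eq:relax-L-P-operators,eq:EOS-relax}, this automatically enforces ${\cal L}{\bf w}^{(1)}=0$. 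Applying ${\cal L}$ to \cref{eq:relax-nrj-HRM-sys} and using ${\cal L}\bigl({\bf w}-{\cal M}({\bf w})\bigr)\equiv0$ (i.e. \cref{eq:cons_law_mixture_density} together with conservation of $\rho{\bf v}$, $\rho{\bf e}_v$ and $\rho E_r+\rho e_s$) removes the stiff source and yields the exact moment equation $\partial_t{\bf u}^\epsilon+\nabla\cdot\bigl({\cal L}{\bf g}({\bf w}^\epsilon)\bigr)=0$. Because ${\cal L}{\bf g}\bigl({\cal P}({\bf u})\bigr)={\bf f}({\bf u})$ from \cref{eq:maxwellian-equ}, a Taylor expansion of ${\cal L}{\bf g}({\bf w}^\epsilon)$ about ${\cal P}({\bf u}^\epsilon)$ gives $\partial_t{\bf u}^\epsilon+\nabla\cdot{\bf f}({\bf u}^\epsilon)=-\epsilon\,\nabla\cdot\bigl({\cal L}\,{\bf g}'({\cal P}({\bf u}^\epsilon)){\bf w}^{(1)}\bigr)+O(\epsilon^2)$, so that ${\bf f}_v=-\epsilon\,{\cal L}\,{\bf g}'({\cal P}({\bf u}^\epsilon)){\bf w}^{(1)}$ modulo $O(\epsilon^2)$ and it only remains to identify ${\bf w}^{(1)}$.

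To get ${\bf w}^{(1)}$ I would use the $O(1)$ balance of \cref{eq:relax-nrj-HRM-sys}, namely $\partial_t{\cal P}({\bf u}^\epsilon)+\nabla\cdot{\bf g}\bigl({\cal P}({\bf u}^\epsilon)\bigr)=-\bigl({\bf I}-{\cal M}'({\cal P}({\bf u}^\epsilon))\bigr){\bf w}^{(1)}$. Since the only nonzero entries of ${\bf w}-{\cal M}({\bf w})$ are the $\rho E_r$ and $\rho e_s$ ones and they are opposite, the normalization forces ${\bf w}^{(1)}=(0,0,\rho\delta,0,-\rho\delta)^\top$ for a scalar field $\delta$, i.e. $e_r$ shifted by $+\delta$ and $e_s$ by $-\delta$, and the balance collapses to one scalar equation. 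For its right-hand side, $F({\bf Y},e_r)=\tfrac{\gamma-\gamma({\bf Y})}{\gamma({\bf Y})-1}e_r$ from \cref{eq:maxwellian-equ-hrm} gives $\partial_{e_r}F+1=\tfrac{\gamma-1}{\gamma({\bf Y})-1}$, so the $\rho E_r$ entry of $\bigl({\bf I}-{\cal M}'\bigr){\bf w}^{(1)}$ is $\rho\delta\,\tfrac{\gamma-1}{\gamma({\bf Y})-1}$. For its left-hand side I would use that $\p_r=\p$ at equilibrium (from \cref{eq:maxwellian-equ-hrm-press}), so the $\rhob$, $\rho{\bf v}$ and $\rho{\bf e}_v$ entries coincide at this order with the Euler equations for ${\bf u}^\epsilon$ and vanish, while the $\rho E_r$ entry --- after inserting the leading-order identities: advection of ${\bf Y}$, $h_0$ and ${\bf e}_v$, the kinetic-energy relation $\partial_t(\rho e_c)+\nabla\cdot(\rho e_c{\bf v})+{\bf v}\cdot\nabla\p=0$, the internal-energy relation $\partial_t(\rho e_t)+\nabla\cdot(\rho e_t{\bf v})+\p\,\nabla\cdot{\bf v}=0$, and $e_r=\tfrac{\gamma({\bf Y})-1}{\gamma-1}e_t$ --- collapses to $\tfrac{\gamma-\gamma({\bf Y})}{\gamma-1}\,\p\,\nabla\cdot{\bf v}$. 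Equating both sides and using $\p=(\gamma({\bf Y})-1)\rho e_t$ yields $\delta=-\tfrac{(\gamma-\gamma({\bf Y}))(\gamma({\bf Y})-1)^2}{(\gamma-1)^2}\,e_t\,\nabla\cdot{\bf v}$.

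Finally I would compute ${\cal L}\,{\bf g}'({\cal P}({\bf u}^\epsilon)){\bf w}^{(1)}$: the shift $\delta$ in $e_r$ changes $\p_r$ by $(\gamma-1)\rho\delta$, hence the momentum flux by $(\gamma-1)\rho\delta\,{\bf I}_d$ and the $\rho E_r$ flux by $\gamma\rho\delta\,{\bf v}^\top$, while the $\rho e_s$ flux changes by $-\rho\delta\,{\bf v}^\top$ and the $\rhob$, $\rho{\bf e}_v$ fluxes are untouched; applying ${\cal L}$ merges the $\rho E_r$ and $\rho e_s$ contributions into a net energy-flux perturbation $(\gamma-1)\rho\delta\,{\bf v}^\top$. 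Substituting $\delta$ then gives ${\bf f}_v=\mu({\bf Y},\rho,e_t)\,(\nabla\cdot{\bf v})\,(0,\ {\bf I}_d,\ {\bf v}^\top,\ 0)^\top$ with $\mu({\bf Y},\rho,e_t)=\epsilon\,\tfrac{(\gamma-\gamma({\bf Y}))(\gamma({\bf Y})-1)^2}{\gamma-1}\,\rho e_t$, and $\mu>0$ follows from $1<\gamma({\bf Y})\le\tfrac53<\gamma$ (\cref{eq:gamma-bound,eq:gamma-max}) and $\rho,e_t>0$ on $\Omega^a$. I expect the main obstacle to be the simplification of the $\rho E_r$ entry of the $O(1)$ balance: several advection and pressure-work terms must cancel against each other via the leading-order conservation laws before the clean prefactor $\tfrac{\gamma-\gamma({\bf Y})}{\gamma-1}$ appears, and one has to be careful that the identity $e_r+e_s=e_t$ --- exact here because $E_r=e_r+e_c$ --- is precisely what makes the leading-order balance for $\rho e_r$ available.
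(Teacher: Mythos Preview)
Your argument is correct and reaches exactly the first-order correction $e_r^{(1)}=\delta=-(\gamma-\gamma({\bf Y}))\big(\tfrac{\gamma({\bf Y})-1}{\gamma-1}\big)^2 e_t\,\nabla\cdot{\bf v}$ and the viscous flux ${\bf f}_v$ stated in the theorem. The organization, however, differs from the paper's proof. The paper expands the primitive quantities $e_r^\epsilon,e_s^\epsilon$ directly, writes the transport equations \cref{eq:relax-nrj-HRM-sys-smooth-sol-b}--\cref{eq:relax-nrj-HRM-sys-smooth-sol-c} together with an auxiliary equation for $F({\bf Y}^\epsilon,e_r^\epsilon)$, and compares the $e_s^0$ and $F$ balances at $O(1)$ to extract $e_s^1-F({\bf Y}^\epsilon,e_r^1)$; only then does it reassemble the momentum and total-energy equations. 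You instead run the moment-closure machinery of \cite{chen_levermore_liu94}: define ${\bf u}^\epsilon={\cal L}{\bf w}^\epsilon$ exactly, impose ${\cal L}{\bf w}^{(1)}=0$ via ${\cal L}{\cal P}=\mathrm{id}$, read off the scalar correction from the $\rho E_r$ row of the $O(1)$ balance, and compute ${\bf f}_v=-\epsilon\,{\cal L}\,{\bf g}'({\cal P}){\bf w}^{(1)}$ in one step. Your route avoids the auxiliary equation for $F$ and makes the structure (only $\p_r$ is perturbed, and by $(\gamma-1)\rho\delta$) transparent; the paper's route is more elementary in that it never needs ${\cal M}'$ or ${\bf g}'$ and works entirely with the two scalar energy equations. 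One small wording issue: the form ${\bf w}^{(1)}=(0,0,\rho\delta,0,-\rho\delta)^\top$ is forced by ${\cal L}{\bf w}^{(1)}=0$ alone (which you already noted), not by the structure of ${\bf w}-{\cal M}({\bf w})$; the latter is what makes the $O(1)$ balance scalar.
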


\begin{proof}
Let consider perturbations to the equilibrium expanded in the form
\begin{equation}\label{eq:chapman-enskog-expansion}
  e_r^\epsilon = e_r^0 + \epsilon e_r^1 + \epsilon^2 e_r^2 + \dots, \quad e_s^\epsilon = e_s^0 + \epsilon e_s^1 + \epsilon^2 e_s^2 + \dots.
\end{equation}

First observe that smooth solutions to \cref{eq:relax-nrj-HRM-sys} with \cref{eq:EOS-relax} satisfy
\begin{subequations}
  \begin{align}
  \partial_t{\bf Y}^\epsilon + {\bf v}^\epsilon\cdot\nabla{\bf Y}^\epsilon &= 0, \\
  \partial_te_r^\epsilon + {\bf v}^\epsilon\cdot\nabla e_r^\epsilon + (\gamma-1)e_r^\epsilon\nabla\cdot{\bf v}^\epsilon &= \frac{e_s^\epsilon- F({\bf Y}^\epsilon,e_r^\epsilon)}{\epsilon}, \label{eq:relax-nrj-HRM-sys-smooth-sol-b}\\
  \partial_te_s^\epsilon + {\bf v}^\epsilon\cdot\nabla e_s^\epsilon &= -\frac{e_s^\epsilon- F({\bf Y}^\epsilon,e_r^\epsilon)}{\epsilon}, \label{eq:relax-nrj-HRM-sys-smooth-sol-c}
\end{align}
\end{subequations}

\noindent from which we deduce
\begin{equation} \label{eq:relax-nrj-HRM-sys-smooth-sol-d}
  \partial_tF({\bf Y}^\epsilon,e_r^\epsilon)+ {\bf v}^\epsilon\cdot\nabla F({\bf Y}^\epsilon,e_r^\epsilon) = \frac{\gamma-\gamma({\bf Y}^\epsilon)}{\gamma({\bf Y}^\epsilon)-1}\Big(\frac{e_s^\epsilon-F({\bf Y}^\epsilon,e_r^\epsilon)}{\epsilon}-(\gamma-1)e_r^\epsilon\nabla\cdot{\bf v}^\epsilon\Big).
\end{equation}

\noindent Plugging \cref{eq:chapman-enskog-expansion} into either \cref{eq:relax-nrj-HRM-sys-smooth-sol-b}, or \cref{eq:relax-nrj-HRM-sys-smooth-sol-c}, the order ${\cal O}(\epsilon^{-1})$ imposes
\begin{equation*}
 e_s^0 = F({\bf Y}^\epsilon,e_r^0) = \frac{\gamma-\gamma({\bf Y}^\epsilon)}{\gamma({\bf Y}^\epsilon)-1}e_r^0, 
\end{equation*}

\noindent while the constraint $e_t^\epsilon=e_r^\epsilon+e_s^\epsilon=e_r^0+F({\bf Y}^\epsilon,e_r^0)$ in \cref{eq:var-principle} gives
\begin{equation*}
 e_r^0 = \frac{\gamma({\bf Y}^\epsilon)-1}{\gamma-1}e_t^\epsilon, \quad e_s^0 = \frac{\gamma-\gamma({\bf Y}^\epsilon)}{\gamma-1}e_t^\epsilon, \quad e_r^k+e_s^k=0 \quad \forall k\geq 1.
\end{equation*}

Plugging again \cref{eq:chapman-enskog-expansion} into \cref{eq:relax-nrj-HRM-sys-smooth-sol-c,eq:relax-nrj-HRM-sys-smooth-sol-d}, we obtain at leading order
\begin{align*}
  \partial_te_s^0  + {\bf v}^\epsilon\cdot\nabla e_s^0 &= -\big(e_s^1-F({\bf Y}^\epsilon,e_r^1)\big), \\
  \partial_tF({\bf Y}^\epsilon,e_r^0)+ {\bf v}^\epsilon\cdot\nabla F({\bf Y}^\epsilon,e_r^0) &= \frac{\gamma-\gamma({\bf Y}^\epsilon)}{\gamma({\bf Y}^\epsilon)-1}\Big(e_s^1-F({\bf Y}^\epsilon,e_r^1)-(\gamma-1)e_r^0\nabla\cdot{\bf v}^\epsilon\Big),
\end{align*}

\noindent and since $e_s^0=F({\bf Y}^\epsilon,e_r^0)$, we get
\begin{equation*}
  -\big(e_s^1-F({\bf Y}^\epsilon,e_r^1)\big) = \frac{\gamma-\gamma({\bf Y}^\epsilon)}{\gamma({\bf Y}^\epsilon)-1}\Big(e_s^1-F({\bf Y}^\epsilon,e_r^1)-(\gamma-1)e_r^0\nabla\cdot{\bf v}^\epsilon\Big),
\end{equation*}

\noindent and using the above expressions for $e_r^0$ and $e_s^0$ gives
\begin{equation*}
  e_r^1=-e_s^1=-\big(\gamma-\gamma({\bf Y}^\epsilon)\big)\Big(\frac{\gamma({\bf Y}^\epsilon)-1}{\gamma-1}\Big)^2e_t^\epsilon\nabla\cdot{\bf v}^\epsilon.
\end{equation*}

Finally, in \cref{eq:relax-nrj-HRM-sys} consider the momentum equation and add up the equations for $\rho E_r$, $\rho e_s$, $\rho e_v$ together with an equation for $\rho h_0=\sum_\alpha\rho_\alpha h_\alpha^0$. We then obtain up to order ${\cal O}(\epsilon)$
\begin{align*}
  \partial_t\rho^\epsilon {\bf v}^\epsilon  + \nabla\cdot\big(\rho^\epsilon{\bf v}^\epsilon{\bf v}^{\epsilon\top} + p_r(\rho^\epsilon,e_r^0+\epsilon e_r^1) \big) &= 0, \\
  \partial_t\rho^\epsilon E^\epsilon  + \nabla\cdot\Big(\big(\rho^\epsilon E^\epsilon + p_r(\rho^\epsilon,e_r^0+\epsilon e_r^1) \big){\bf v}^\epsilon\Big) &= 0, 
\end{align*}

\noindent and we conclude by observing that $p_r(\rho^\epsilon,e_r^0)=\p({\bf Y}^\epsilon,\rho^\epsilon,e_t^\epsilon)$ from \cref{eq:maxwellian-equ-hrm-press} and by using the expression for $e_r^1$. \qed
\end{proof}

\subsection{General framework for the design of three-point schemes}\label{sec:FVrelax2FVEuler_ms}% based numerical flux}

We now clarify the form of the numerical flux for \cref{eq:HRM_PDEs-a} that we deduce from a
numerical flux for \cref{eq:relax-nrj-HRM-sys} in homogeneous form. The former flux will satisfy the properties (i) to (vi) in \cref{sec:FV-schemes} providing that the latter satisfies similar properties. The three-point scheme for \cref{eq:relax-nrj-HRM-sys}$_{\epsilon\rightarrow\infty}$ reads
\begin{equation}\label{eq:3pt-scheme-relax}
 {\bf W}_j^{n+1}-{\bf W}_j^{n} + \tfrac{\Delta t^{(n)}}{\Delta x}\big({\bf H}({\bf W}_{j}^{n},{\bf W}_{j+1}^{n},{\bf n})-{\bf H}({\bf W}_{j-1}^{n},{\bf W}_{j}^{n},{\bf n})\big) = 0, %\quad 0<\lambda\leq\alpha_{max},
\end{equation}
\noindent with ${\bf H}({\bf w},{\bf w},{\bf n})={\bf g}({\bf w})\cdot{\bf n}$. We assume that under some CFL condition on the time step (see \cref{sec:ex-ES-3pt-schemes}), \cref{eq:3pt-scheme-relax} enjoys the properties (i) to (vi) in \cref{sec:FV-schemes}. In particular we have
\begin{equation}\label{eq:3pt-scheme-equal-relax}
 \rho\zeta({\bf W}_j^{n+1}) \leq \rho\zeta({\bf W}_j^{n}) - \tfrac{\Delta t^{(n)}}{\Delta x}\big(Z({\bf W}_{j}^{n},{\bf W}_{j+1}^{n},{\bf n})-Z({\bf W}_{j-1}^{n},{\bf W}_{j}^{n},{\bf n})\big), %\quad 0<\lambda\leq\alpha_{max},
\end{equation}
\noindent with $Z({\bf w},{\bf w},{\bf n})=\rho\zeta{\bf v}\cdot{\bf n}$. Then, from \cref{eq:3pt-scheme-relax} we may design a scheme for \cref{eq:HRM_PDEs-a} as stated in the theorem below.

\begin{theorem}\label{th:ES-3-pt-scheme}
  Consider the three-point numerical scheme \cref{eq:3pt-scheme-relax} for \cref{eq:relax-nrj-HRM-sys}$_{\epsilon\rightarrow\infty}$, i.e., $\epsilon\rightarrow\infty$ in \cref{eq:relax-nrj-HRM-sys},  with Lipschitz, consistent and conservative numerical flux. Assume that it satisfies \cref{eq:3pt-scheme-equal-relax} with a consistent numerical flux, some maximum principle on the mass fractions
\begin{equation}\label{eq:relax-mass-minmax}
 \min(Y_{\alpha_{j-1}}^{n},Y_{\alpha_{j}}^{n},Y_{\alpha_{j+1}}^{n})\leq Y_{\alpha_j}^{n+1}\leq\max(Y_{\alpha_{j-1}}^{n},Y_{\alpha_{j}}^{n},Y_{\alpha_{j+1}}^{n}) \quad \forall 1\leq \alpha\leq n_s,
\end{equation}

\noindent and the specific entropy
\begin{equation}\label{eq:relax-entropy-minmax}
 \zeta({\bf W}_j^{n+1})\leq\max\big(\zeta({\bf W}_{j-1}^{n}),\zeta({\bf W}_j^{n}),\zeta({\bf W}_{j+1}^{n})\big),
\end{equation}

\noindent and is robust, ${\bf W}_{j\in\mathbb{Z}}^{n\geq0}\in\Omega^r$. If \cref{eq:gamma-max} holds, the three-point numerical scheme \cref{eq:3pt-scheme-a} with the Lipschitz, consistent and conservative numerical flux 
%\begin{subequations}\label{eq:flux_h_from_H}
 \begin{align}\label{eq:flux_h_from_H}
   {\bf h}({\bf u}^-,{\bf u}^+,{\bf n}) &= {\cal L}{\bf H}\big({\cal P}({\bf u}^-),{\cal P}({\bf u}^+),{\bf n}\big), \\ %= \big({\bf H}_{\rhob}^\top,{\bf H}_{\rho{\bf v}}^\top,h_{\rho E},{\bf H}_{\rho{\bf e_v}}^\top\big)^\top, \\
   h_X({\bf u}^-,{\bf u}^+,{\bf n}) &= H_X\big({\cal P}({\bf u}^-),{\cal P}({\bf u}^+),{\bf n}\big), \quad X\in\{\rhob,\rho{\bf v},\rho{\bf e}_v\}, \nonumber \\
   h_{\rho E}({\bf u}^-,{\bf u}^+,{\bf n}) &= H_{\rho E_r}({\cal P}({\bf u}^-),{\cal P}({\bf u}^+),{\bf n}\big)+H_{\rho e_s}({\cal P}({\bf u}^-),{\cal P}({\bf u}^+),{\bf n}\big) \nonumber \\ 
	& +\sum_{\alpha=1}^{n_s}h_\alpha^0H_{\rho_\alpha}({\cal P}({\bf u}^-),{\cal P}({\bf u}^+),{\bf n}\big) +\sum_{\beta=1}^{n_d}H_{\rho e_\beta^v}({\cal P}({\bf u}^-),{\cal P}({\bf u}^+),{\bf n}\big),  \nonumber
\end{align}
%\end{subequations}
\noindent with ${\cal L}$ defined  in \cref{eq:relax-L-P-operators}, is ES for the pair $(\eta,{\bf q})$ in \cref{eq:entropy_ineq_cont}, satisfies \cref{eq:3pt-scheme-ineq} with $Q({\bf u}^-,{\bf u}^+,{\bf n})=Z\big({\cal P}({\bf u}^-),{\cal P}({\bf u}^+),{\bf n}\big)$, the minimum and maximum principles \cref{eq:3pt-scheme-mass-minmax,eq:3pt-scheme-entropy-minmax}, and is robust, ${\bf U}_{j\in\mathbb{Z}}^{n\geq0}\in\Omega^a$.
\end{theorem}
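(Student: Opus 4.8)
The plan is to show that each property claimed for $\mathbf{h}$ is inherited from the corresponding property of $\mathbf{H}$ through the change of variables encoded in the operators $\mathcal{L}$ and $\mathcal{P}$, using the equilibrium structure $\mathbf{u} = \mathcal{L}\mathbf{w}$, $\mathbf{w} = \mathcal{M}(\mathbf{w}) = \mathcal{P}(\mathbf{u})$ together with $\mathbf{f}(\mathbf{u}) = \mathcal{L}\mathbf{g}(\mathcal{P}(\mathbf{u}))$ from \cref{eq:maxwellian-equ}. I would organize the argument as follows. \textbf{Consistency and conservation.} Since $\mathcal{L}\mathcal{P} = \mathrm{id}$ on $\Omega^a$ and $\mathbf{H}(\mathcal{P}(\mathbf{u}),\mathcal{P}(\mathbf{u}),\mathbf{n}) = \mathbf{g}(\mathcal{P}(\mathbf{u}))\cdot\mathbf{n}$, the formula \cref{eq:flux_h_from_H} gives $\mathbf{h}(\mathbf{u},\mathbf{u},\mathbf{n}) = \mathcal{L}\mathbf{g}(\mathcal{P}(\mathbf{u}))\cdot\mathbf{n} = \mathbf{f}(\mathbf{u})\cdot\mathbf{n}$, which is \cref{eq:consistent_flux}; conservation \cref{eq:conservative_flux} follows because $\mathbf{H}$ is conservative and $\mathcal{L}$, $\mathcal{P}$ act componentwise without coupling to $\mathbf{n}$. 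Lipschitz continuity of $\mathbf{h}$ follows from that of $\mathbf{H}$ once one checks $\mathcal{P}$ and $\mathcal{L}$ are Lipschitz on the relevant sets, which is immediate from \cref{eq:relax-L-P-operators} and \cref{eq:EOS-relax} since $e_r(\rho,\mathrm{p})$ is smooth for $\rho,\mathrm{p}>0$.

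\textbf{Reformulating the update.} The key computational step is to write the scheme \cref{eq:3pt-scheme-a} with flux \cref{eq:flux_h_from_H} as the image under $\mathcal{L}$ of the scheme \cref{eq:3pt-scheme-relax} for the relaxation system, followed by an instantaneous projection back onto equilibrium. Concretely, set $\mathbf{W}_j^n = \mathcal{P}(\mathbf{U}_j^n)$, run one step of \cref{eq:3pt-scheme-relax} to get an intermediate $\widetilde{\mathbf{W}}_j^{n+1}$, and observe that the $\mathcal{L}$-components of $\widetilde{\mathbf{W}}_j^{n+1}$ are exactly $\mathbf{U}_j^{n+1}$ as produced by \cref{eq:3pt-scheme-a}: this is what the blockwise structure of \cref{eq:flux_h_from_H} encodes (the $\rho E$ flux being the sum of the $\rho E_r$, $\rho e_s$, enthalpy-of-formation, and vibration pieces matches $E = E_r + e_s + h_0 + e_v$ in \cref{eq:maxwellian-equ-hrm}). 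Because $\mathbf{W}_j^n$ is already a Maxwellian and the hyperbolic update of \cref{eq:relax-nrj-HRM-sys}$_{\epsilon\to\infty}$ transports $\rho$, $\rho\mathbf{v}$, $\rho E_r$ decoupled from $\mathbf{Y}$, $\mathbf{e}_v$ (by \cref{eq:cons_law_mixture_density}), the state $\widetilde{\mathbf{W}}_j^{n+1}$ need not be Maxwellian, but $\mathcal{L}\widetilde{\mathbf{W}}_j^{n+1} = \mathbf{U}_j^{n+1}$ and the next-step projection $\mathcal{P}(\mathbf{U}_j^{n+1})$ is the equilibrium reached by instantaneous relaxation from $\widetilde{\mathbf{W}}_j^{n+1}$ — this is exactly the content of \cref{th:H-theorem} and \cref{th:var-principle}.

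\textbf{Robustness, maximum principles, entropy stability.} Robustness of $\mathbf{h}$ follows since $\widetilde{\mathbf{W}}_j^{n+1}\in\Omega^r$ by robustness of \cref{eq:3pt-scheme-relax}, and $\mathcal{L}$ maps $\Omega^r$ into $\Omega^a$ (checking $e_t = e_r + e_s > 0$, $\rho_\alpha>0$, $e_\beta^v>0$ from \cref{eq:relax-set-of-states}). The mass-fraction maximum principle \cref{eq:3pt-scheme-mass-minmax} is identical to \cref{eq:relax-mass-minmax} because $\mathbf{Y}$ and $\rho$ are the same for $\mathbf{u}$ and $\mathcal{P}(\mathbf{u})$. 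For the entropy inequality, apply \cref{eq:3pt-scheme-equal-relax} to the step $\mathbf{W}_j^n \mapsto \widetilde{\mathbf{W}}_j^{n+1}$, then use that instantaneous relaxation from $\widetilde{\mathbf{W}}_j^{n+1}$ to $\mathbf{W}_j^{n+1} = \mathcal{P}(\mathbf{U}_j^{n+1})$ only decreases $\rho\zeta$ (\cref{th:H-theorem}), and finally invoke the variational principle \cref{th:var-principle}, $-\mathbf{s}(\mathbf{u}) = \rho\zeta(\mathcal{P}(\mathbf{u}))$ evaluated per unit volume, i.e. $\eta(\mathbf{u}) = \rho\zeta(\mathcal{P}(\mathbf{u})) $ with $\eta = -\rho\mathbf{s}$, to translate the $\zeta$-inequality into \cref{eq:3pt-scheme-ineq} with $Q = Z\circ(\mathcal{P}\times\mathcal{P})$, consistency of $Q$ following from $Z(\mathbf{w},\mathbf{w},\mathbf{n}) = \rho\zeta\,\mathbf{v}\cdot\mathbf{n}$ and $\rho\zeta(\mathcal{P}(\mathbf{u})) = \eta(\mathbf{u})$. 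The same chain — relaxation-system maximum principle \cref{eq:relax-entropy-minmax} on $\zeta$ for the hyperbolic substep, then monotone decrease of $\zeta$ under projection, then $\mathbf{s}(\mathbf{u}) = -\zeta(\mathcal{P}(\mathbf{u}))$ — yields the minimum principle \cref{eq:3pt-scheme-entropy-minmax}.

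\textbf{Main obstacle.} The delicate point is justifying rigorously that the projection step $\widetilde{\mathbf{W}}_j^{n+1}\mapsto\mathcal{P}(\mathbf{U}_j^{n+1})$ preserves $\Omega^a$, does not spoil the mass-fraction bounds, and strictly decreases $\zeta$ while leaving $\mathbf{u} = \mathcal{L}\mathbf{w}$ invariant — in other words, that instantaneous relaxation is well-defined and consistent on the discrete states produced by the scheme. This is exactly where \cref{eq:gamma-max} is used (it makes $F(\mathbf{Y},e_r)>0$ and $\zeta$ strictly convex, so the equilibrium is the unique global minimizer by \cref{th:var-principle}), and where \cref{th:H-theorem} is invoked to guarantee both convergence to that minimizer and Lyapunov stability; I would state this projection lemma explicitly and verify that $\widetilde{\mathbf{W}}_j^{n+1}$ has positive $\rho$, $e_r + e_s = e_t$, $e_\beta^v$, which are precisely the hypotheses needed for $\mathcal{P}(\mathbf{U}_j^{n+1})$ to be well-defined in $\Omega^r$.
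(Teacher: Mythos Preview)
Your proposal is correct and follows essentially the same route as the paper: set $\mathbf{W}_j^n=\mathcal{P}(\mathbf{U}_j^n)$, run one step of \cref{eq:3pt-scheme-relax} to an intermediate state, identify $\mathbf{U}_j^{n+1}=\mathcal{L}\widetilde{\mathbf{W}}_j^{n+1}$, and transfer each property through $\mathcal{L}$ and $\mathcal{P}$. The only notable difference is that you insert the H-theorem (\cref{th:H-theorem}) as an explicit relaxation substep to pass from $\rho\zeta(\widetilde{\mathbf{W}}_j^{n+1})$ down to $\eta(\mathbf{U}_j^{n+1})$, whereas the paper applies the variational principle \cref{eq:var-principle} directly: since $\mathcal{L}\widetilde{\mathbf{W}}_j^{n+1}=\mathbf{U}_j^{n+1}$ already forces $e_r+e_s=e_t$, one has $\eta(\mathbf{U}_j^{n+1})\leq\rho\zeta(\widetilde{\mathbf{W}}_j^{n+1})$ in one line, and your ``main obstacle'' dissolves accordingly.
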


\begin{proof}
  By consistency of ${\bf H}$: ${\bf h}({\bf u},{\bf u},{\bf n})\overset{\cref{eq:flux_h_from_H}}{=}{\cal L}{\bf H}\big({\cal P}({\bf u}),{\cal P}({\bf u}),{\bf n}\big)={\cal L}{\bf g}({\cal P}({\bf u}))\cdot{\bf n}\overset{\cref{eq:maxwellian-equ}}{=}{\bf f}({\bf u})\cdot{\bf n}$, while Lipschitz continuity and conservation are direct since ${\cal L}$ is linear.

Then, let ${\bf W}_{j}^{n}={\cal P}({\bf U}_{j}^{n})$ so $\rho\zeta({\bf W}_{j}^{n})=\eta({\bf U}_{j}^{n})$ and $Z({\bf W}_{j}^{n},{\bf W}_{j+1}^{n},{\bf n})=Q({\bf U}_{j}^{n},{\bf U}_{j+1}^{n},{\bf n})$, and define ${\bf W}_{j}^{n+1}$ from \cref{eq:3pt-scheme-relax} and ${\bf U}_{j}^{n+1}={\cal L}{\bf W}_{j}^{n+1}$. If ${\bf U}_{j}^{n}\in\Omega^a$, then ${\bf W}_{j}^{n}={\cal P}({\bf U}_{j}^{n})\in\Omega^r$ since $e_r=\tfrac{\gamma({\bf Y})-1}{\gamma-1}e_t$ and $e_s=\tfrac{\gamma-\gamma({\bf Y})}{\gamma-1}e_t$, and ${\bf W}_{j}^{n+1}\in\Omega^r$ so ${\bf U}_{j}^{n+1}={\cal L}{\bf W}_{j}^{n+1}\in\Omega^a$ by \cref{eq:maxwellian-equ}. Now, by the variational principle \cref{eq:var-principle} we have %and Jensen's inequality, we have
%\begin{equation*}
%   \langle\rho\zeta(\bcW_h^{n+1})\rangle_j  \geq\langle\rho\zeta({\cal L}_\pi\bcW_h^{n+1})\rangle_j \overset{\cref{eq:var-principle}}{\geq} \langle\eta({\cal L}{\cal L}_\pi\bcW_h^{n+1})\rangle_j \geq \eta(\langle{\cal L}{\cal L}_\pi\bcW_h^{n+1}\rangle_j) \overset{\cref{eq:maxwellian-equ}}{=} \eta({\cal L}\langle{\cal L}_\pi\bcW_h^{n+1}\rangle_j), % = \eta({\bf U}_j^{n+1}),
%\end{equation*}
%
%\noindent and since ${\bf U}_{j}^{n+1}={\cal L}{\bf W}_{j}^{n+1}={\cal L}\langle{\cal L}_\pi\bcW_h^{n+1}\rangle_j$ we deduce
\begin{eqnarray*}
\eta({\bf U}_j^{n+1}) &\overset{\cref{eq:var-principle}}{\leq}& \rho\zeta({\bf W}_j^{n+1}) \\
 &\overset{\cref{eq:3pt-scheme-equal-relax}}{\leq}& \rho\zeta({\bf W}_j^{n}) - \tfrac{\Delta t^{(n)}}{\Delta x}\big(Z({\bf W}_{j}^{n},{\bf W}_{j+1}^{n},{\bf n})-Z({\bf W}_{j-1}^{n},{\bf W}_{j}^{n},{\bf n})\big) \\
 &=& \rho\zeta\big({\cal P}({\bf U}_j^{n})\big) - \tfrac{\Delta t^{(n)}}{\Delta x}\Big(Z\big({\cal P}({\bf U}_{j}^{n}),{\cal P}({\bf U}_{j+1}^{n}),{\bf n}\big)-Z\big({\cal P}({\bf U}_{j-1}^{n}),{\cal P}({\bf U}_{j}^{n}),{\bf n}\big)\Big) \\
 &=& \eta({\bf U}_j^{n}) - \tfrac{\Delta t^{(n)}}{\Delta x}\big(Q({\bf U}_{j}^{n},{\bf U}_{j+1}^{n},{\bf n})-Q({\bf U}_{j-1}^{n},{\bf U}_{j}^{n},{\bf n})\big).
\end{eqnarray*}

Finally, pluging $\zeta({\bf W}_{j}^{n})=-\s({\bf U}_{j}^{n})$ into \cref{eq:relax-entropy-minmax} for all $j$ and using \cref{eq:var-principle} we obtain \cref{eq:3pt-scheme-entropy-minmax}, while \cref{eq:3pt-scheme-mass-minmax} holds because \cref{eq:relax-mass-minmax} and the components associated to $\rhob$ in \cref{eq:flux_h_from_H} remain unaffected. \qed
\end{proof}

Since the pressure in \cref{eq:relax-nrj-HRM-sys} obeys a polytropic ideal gas EOS and the variables $({\bf Y},{\bf e}_v,e_s)$ are purely advected, one may use many methods for \cref{eq:3pt-scheme-relax} such as, e.g., the Godunov \cite{godunov_59}, Rusanov \cite{Rusanov1961}, HLL \cite{hll_83}, or Roe \cite{Roe_1981} schemes, though the latter method does not guaranty robustness \cite{einfeldt_etal_91}. In the next section we will consider some of these schemes.

In the definition of the numerical flux \cref{eq:flux_h_from_H}, the ${\cal L}$ operator consists in adding up some components of ${\bf H}$ to build the numerical flux for the total energy, $\rho E$, while the ${\cal P}$ operators consist in taking data at equilibrium. This last operation is equivalent to applying instantaneous relaxation, i.e., to consider \cref{eq:relax-nrj-HRM-sys}$_{\epsilon\rightarrow\infty}$, through a splitting of hyperbolic and relaxation operators \cite{coquel_perthame_98}. Note that instantaneous relaxation is here justified by the analysis in \cref{sec:nrj_relax_var_pcpe}. This approach is also in agreement with the numerical flux we will consider that uses discrete projections onto Maxwellian equibria \cite{bouchut_04}.

\begin{remark}
 We note that \cref{th:ES-3-pt-scheme} may be directly applied to the multidimensional schemes \cref{eq:2D-FV-scheme} instead of the three-point scheme \cref{eq:3pt-scheme-a}. This may allow to use gueninely multi-dimensional schemes possibly with a less restrictive constraint on the time step. In contrast considering \cref{eq:3pt-scheme-a} with the CFL condition \cref{eq:CFL-positive-2D-scheme} would allow to encompass more general schemes such as the ARS we will consider in \cref{sec:ex-ES-3pt-schemes}.
\end{remark}

%
%%%
%
\section{Examples of three-point schemes}\label{sec:ex-ES-3pt-schemes}

In this section we consider examples of three-point schemes \cref{eq:3pt-scheme-relax} for the homogeneous energy relaxation system \cref{eq:relax-nrj-HRM-sys}$_{\epsilon\rightarrow\infty}$ to illustrate \cref{th:ES-3-pt-scheme}. Such schemes define schemes \cref{eq:3pt-scheme-a} for \cref{eq:HRM_PDEs-a} through \cref{eq:flux_h_from_H}. As already noticed, other numerical schemes may be used since we use a simple polytropic EOS in \cref{eq:relax-nrj-HRM-sys}. %It is worth noting that we here assume that the Riemann problems for \cref{eq:HRM_PDEs-a} and \cref{eq:relax-nrj-HRM-sys}$_{\epsilon\rightarrow\infty}$ are well posed, so the design of associated numerical schemes is legitimate \cite{bressan_11}.

The schemes we consider use Riemann type solvers with numerical flux in \cref{eq:3pt-scheme-relax} of the form
\begin{equation}\label{eq:relax_flux}
 {\bf H}({\bf w}^-,{\bf w}^+,{\bf n}) = {\bf g}\big(\bcW(0;{\bf w}^-,{\bf w}^+,{\bf n})\big)\cdot{\bf n},
\end{equation}

\noindent where $\bcW(\cdot;{\bf w}_L,{\bf w}_R,{\bf n})$ is used to approximate the solution to the Riemann problem \cref{eq:relax-nrj-HRM-sys}$_{\epsilon\rightarrow\infty}$ with initial data ${\bf w}_0(x) = {\bf w}_L$ if $x:={\bf x}\cdot{\bf n}<0$ and ${\bf w}_0(x) = {\bf w}_R$ if ${\bf x}\cdot{\bf n}>0$. We then build three-point schemes for \cref{eq:HRM_PDEs-a} by simply applying \cref{eq:flux_h_from_H}.

\subsection{The Godunov method}\label{sec:Godunov}

As noticed in \cite{coquel_perthame_98} it is possible to apply the exact Riemann solver \cite{godunov_59} for polytropic gas to \cref{eq:3pt-scheme-relax} where $\bcW$ corresponds to the exact entropy weak solution to the Riemann problem. Consider the compressible Euler equations
\begin{equation}\label{eq:Euler_eqns}
 \partial_t\tilde{\bf w} + \nabla\cdot\tilde{\bf g}(\tilde{\bf w}) = 0, \quad \tilde{\bf w}=\begin{pmatrix}\rho \\ \rho{\bf v} \\ \rho E_r\end{pmatrix}, \quad \tilde{\bf g}(\tilde{\bf w})=\begin{pmatrix}\rho{\bf v}^\top \\ \rho{\bf v}{\bf v}^\top+\mathrm{p}_r{\bf I} \\ (\rho E_r+\mathrm{p}_r){\bf v}^\top\end{pmatrix},
\end{equation}

\noindent with $\mathrm{p}_r$ defined from \cref{eq:EOS-relax} and $\gamma$ satisfying \cref{eq:gamma-max}.

%First, since \cref{eq:saturation_cond} is always satisfied one may evaluate the solution in the $(\rho\Yb,\rho,\rho{\bf v}^\top,\rho E_r,\rho e_v,\rho e_s)$ variables. Then, a
Any variable $\psi$ in $\{{\bf Y},{\bf e}_v,e_s\}$ is uncoupled from the $\tilde{\bf w}$ variables and is only purely transoprted in \cref{eq:relax-nrj-HRM-sys}$_{\epsilon\rightarrow\infty}$. Noting that the intermediate states are $(\psi_L,\psi_L,\psi_R,\psi_R)$, the entropy weak solution is made of the Riemann solution for the Euler equations with variables $\tilde{\bf w}$ and fluxes $\tilde{\bf g}(\tilde{\bf w})$ plus the states for $\psi$ \cite[Lemma~4.6]{coquel_perthame_98}. The Godunov method is thus ES and guaranties robustness of \cref{eq:3pt-scheme-relax} as well as the minimum and maximum principles \cref{eq:relax-mass-minmax,eq:relax-entropy-minmax} under some standard CFL condition.

Let $\rho$, $u={\bf v}\cdot{\bf n}$ and $\p_r$ be the solution to the Riemann problem for \cref{eq:Euler_eqns} with initial data $\tilde{\bf w}_0(x) = \big(\rho_L,u_L,\p({\bf Y}_L,\rho_L,e_{t_L})\big)^\top$ if $x:={\bf x}\cdot{\bf n}<0$ and $\tilde{\bf w}_0(x) = \big(\rho_R,u_R,\p({\bf Y}_R,\rho_R,e_{t_R})\big)^\top$ if ${\bf x}\cdot{\bf n}>0$ and let $u^\star$ be the velocity in the star region. Note that $\p_{r_X}=\p_X$ given by \cref{eq:mixture_eos} for $X=L,R$ since data are taken at equilibrium in \cref{eq:flux_h_from_H}. Then the numerical flux for \cref{eq:3pt-scheme-a} reads
\begin{equation}\label{eq:godunov_flux}
 {\bf h}^{God}({\bf u}_L,{\bf u}_R,{\bf n}) = \begin{pmatrix} \rho(\epsilon_L{\bf Y}_L + \epsilon_R{\bf Y}_R) u \\ \rho u(u{\bf n}+\epsilon_L{\bf v}_L^\perp+\epsilon_R{\bf v}_R^\perp) + \p_r{\bf n} \\ \rho\big(\epsilon_LE_L^\star+ \epsilon_RE_R^\star\big)u + p_ru \\ \rho(\epsilon_Le_{v_L}+\epsilon_Re_{v_R})u \end{pmatrix},
\end{equation}

\noindent where $E_X^\star=h_0({\bf Y}_X)+e_r+\tfrac{\gamma-\gamma({\bf Y}_X)}{\gamma({\bf Y}_X)-1}e_{r}+e_{v_X}+\tfrac{{\bf v}_X^\perp\cdot{\bf v}_X^\perp+u^2}{2}$, $e_r=\tfrac{\p_r}{(\gamma-1)\rho}$, ${\bf v}_X^\perp={\bf v}_X-({\bf v}_X\cdot{\bf n}){\bf n}$, $X=L,R$, $\epsilon_L=1$ if $u^\star>0$ and $0$ else, and $\epsilon_R=1-\epsilon_L$. Finally, the condition on the time step is \cref{eq:3pt-scheme-CFL} with $\lambda({\bf u})=|{\bf v}\cdot{\bf n}|+c_\gamma(\rho,\p)$ with $c_\gamma(\rho,\p)=\sqrt{\gamma\p/\rho}$ and $\p$ defined from \cref{eq:mixture_eos}.

\subsection{The HLL numerical flux}\label{sec:HLL_flux}

The HLL approximate Riemann solver \cite{hll_83} for \cref{eq:3pt-scheme-relax} reads
\begin{equation}\label{eq:HLL_ARS-xt}
 \bcW^{hll}({\bf w}_L,{\bf w}_R,{\bf n}) = \left\{ \begin{array}{ll}  {\bf w}_L, & \tfrac{x}{t} < S_L, \\
 \frac{S_R{\bf w}_R-S_L{\bf w}_L+{\bf g}({\bf w}_L)-{\bf g}({\bf w}_R)}{S_R-S_L}, & S_L<\tfrac{x}{t}<S_R,\\
 {\bf w}_R, & S_R < \tfrac{x}{t}, \end{array} \right.
\end{equation}

\noindent and is ES \cite{hll_83} and robust \cite{einfeldt_etal_91} under the CFL condition \cref{eq:3pt-scheme-CFL}, with $\lambda=\max(|S_L|,|S_R|)$, providing that $S_L$ (resp. $S_R$) is a lower (resp. upper) bound of the speed of the leftmost (resp. rightmost) wave in the exact Riemann solution. Applying \cref{eq:flux_h_from_H}, the numerical flux for \cref{eq:3pt-scheme-a} reads
\begin{equation}\label{eq:hll_flux}
 {\bf h}^{hll}({\bf u}_L,{\bf u}_R,{\bf n}) = \left\{ \begin{array}{ll}  {\bf f}({\bf u}_L)\cdot{\bf n}, & \tfrac{x}{t} < S_L, \\
 \frac{S_R{\bf f}({\bf u}_L)\cdot{\bf n}-S_L{\bf f}({\bf u}_R)\cdot{\bf n}+S_LS_R({\bf u}_R-{\bf u}_L)}{S_R-S_L}, & S_L<\tfrac{x}{t}<S_R,\\
 {\bf f}({\bf u}_R)\cdot{\bf n}, & S_R < \tfrac{x}{t}, \end{array} \right.
\end{equation}

\noindent and we evaluate the wave speeds from the two-rarefaction approximation \cite[Ch.~9]{toro_book}:
\begin{equation*}
S_L={\bf v}_L\cdot{\bf n}-c_L, \quad S_R={\bf v}_R\cdot{\bf n}+c_R, \quad c_X=c_\gamma(\rho_X,\p_X)\sqrt{1+\frac{\gamma+1}{2\gamma}\Big(\frac{\p_{tr}^\star}{\p_X}-1\Big)^+},
\end{equation*}

\noindent where $(\cdot)^+=\max(\cdot,0)$ denotes the positive part, $c_\gamma(\rho,\p)=\sqrt{\gamma\p/\rho}$, $\p_X=\p({\bf Y}_X,\rho_X,e_{t_X})$ given by \cref{eq:mixture_eos}, $\gamma$ satisfying \cref{eq:gamma-max}, and
\begin{equation*}
\p_{tr}^\star = \Bigg(\frac{c_\gamma(\rho_L,\p_L) + c_\gamma(\rho_R,\p_R) + \frac{\gamma-1}{2}({\bf v}_L-{\bf v}_R)\cdot{\bf n}}{c_\gamma(\rho_L,\p_L)\p_L^{-\frac{\gamma-1}{2\gamma}}+c_\gamma(\rho_R,\p_R)\p_R^{-\frac{\gamma-1}{2\gamma}}}\Bigg)^{\tfrac{2\gamma}{\gamma-1}}.
\end{equation*}

Note that the two-rarefaction approximation holds for the compressible Euler equations with a polytropic EOS for an adiabatic exponent $1<\gamma\leq\tfrac{5}{3}$ \cite{guermond_popov_16}. The strict inequality in \cref{eq:gamma-max} may thus prevent the bound estimates with this approach. However, the analysis in \cite[Lemma~4.2]{guermond_popov_16} shows that this may occur only for moderate shock strengths so the scheme remains ES for strong shocks as expected in practice. For instance, we use $\gamma=1.01\times\tfrac{5}{3}$ in the numerical experiments of \cref{sec:num_xp} for which the above estimates are valid when either $\p_{tr}^\star\leq\p_X$, or $\p_{tr}^\star\gtrsim 1.05\p_X$.

%The energy relaxation approximation thus makes it possible to evaluate the wave speed estimates properly \red{because the subcharacteristic condition \cref{eq:gamma-max} guaranties that the fan of waves ?????}.

%We stress that the estimates for $S_L$ and $S_R$ do not provide lower and upper bounds of the wave speeds in the Riemann problem for \cref{eq:3pt-scheme-a} when using the left and right adiabatic exponents as shown in \cref{sec:appendix_2_raref_approx}. \red{TODO: Rq sur valildite TRA pour $\gamma>5/3$}

%
\subsection{Pressure relaxation-based numerical flux}\label{sec:bouchut_flux}

We now consider the numerical flux based on relaxation of pressure \cite[Prop.~2.21]{bouchut_04}. The approximate Riemann solver for \cref{eq:3pt-scheme-relax} reads
\begin{equation}\label{eq:bouchut-relax-ARS-xt}%{eq:relax-ARS-xt}
 \bcW^{r}(\tfrac{x}{t};{\bf w}_L,{\bf w}_R,{\bf n}) = \left\{ \begin{array}{ll}  {\bf w}_L, & \tfrac{x}{t} < S_L, \\
 {\bf w}_L^\star, & S_L<\tfrac{x}{t}<u^\star,\\
 {\bf w}_R^\star, & u^\star<\tfrac{x}{t}<S_R,\\
 {\bf w}_R, & S_R < \tfrac{x}{t}, \end{array} \right.
\end{equation}
\noindent where ${\bf w}_X^\star=(\rho_X^\star {\bf Y}_X^\top,\rho_X^\star {\bf v}_X^{\star\top},\rho_X^\star E_{r,X}^\star, \rho_X^\star {\bf e}_{v,X}^\top, \rho_X^\star e_{s,X})^\top$, for $X=L,R$, and
\begin{subequations}\label{eq:sol-PR-relax}
\begin{align}
  {\bf v}_L^\star &= {\bf v}_L^\perp+u^\star{\bf n}, \quad {\bf v}_R^\star = {\bf v}_R^\perp+u^\star{\bf n},  \label{eq:sol-PR-relax-a} \\
  u^\star  &= \frac{a_Lu_L+a_Ru_R+\mathrm{p}_{r,L}-\mathrm{p}_{r,R}}{a_L+a_R}, \quad \mathrm{p}^\star = \frac{a_R\mathrm{p}_{r,L}+a_L\mathrm{p}_{r,R}+a_La_R(u_L-u_R)}{a_L+a_R},  \\
  \frac{1}{\rho_L^\star} &= \frac{1}{\rho_L} + \frac{u^\star-u_L}{a_L},\quad \frac{1}{\rho_R^\star} = \frac{1}{\rho_R} + \frac{u_R-u^\star}{a_R}, \\
  E_{r,L}^\star &= E_{r,L} - \frac{\mathrm{p}^\star u^\star - \mathrm{p}_{r,L}u_L}{a_L},\quad E_{r,R}^\star = E_{r,R} - \frac{\mathrm{p}_{r,R}u_R-\mathrm{p}^\star u^\star}{a_R},
\end{align}
\end{subequations}
\noindent where ${\bf v}_X^\perp={\bf v}_X-u_X{\bf n}$, $u_X={\bf v}_X\cdot{\bf n}$, $\rho_X=\sum_{\alpha=1}^{n_s}\rho_{\alpha_X}$, ${\bf Y}_X=\tfrac{1}{\rho_X}\rhob_X$, and $\mathrm{p}_{r,X}=\mathrm{p}_r(\rho_X,e_{r,X})$ defined by \cref{eq:EOS-relax}.

The wave speeds in \cref{eq:bouchut-relax-ARS-xt} are evaluated from $S_L=u_L-a_L/\rho_L$ and $S_R=u_R+a_R/\rho_R$ where the approximate Lagrangian sound speeds \cite{bouchut_04} are defined by
\begin{subequations}\label{eq:wave-estimate}
\begin{align}
 \left\{ \begin{array}{rcl} \tfrac{a_L}{\rho_L} &=& c_\gamma(\rho_L,\mathrm{p}_{r,L}) + \tfrac{\gamma+1}{2}\Big(\tfrac{\mathrm{p}_{r,R}-\mathrm{p}_{r,L}}{\rho_Rc_\gamma(\rho_R,\mathrm{p}_{r,R})}+u_L-u_R\Big)^+ \\
\tfrac{a_R}{\rho_R} &=& c_\gamma(\rho_R,\mathrm{p}_{r,R}) + \tfrac{\gamma+1}{2}\Big(\tfrac{\mathrm{p}_{r,L}-\mathrm{p}_{r,R}}{a_L}+u_L-u_R\Big)^+
\end{array}\right., & \quad \mbox{if } \mathrm{p}_{r,R} \geq \mathrm{p}_{r,L}, \\
 \left\{ \begin{array}{rcl} \tfrac{a_R}{\rho_R} &=& c_\gamma(\rho_R,\mathrm{p}_{r,R}) + \tfrac{\gamma+1}{2}\Big(\tfrac{\mathrm{p}_{r,L}-\mathrm{p}_{r,R}}{\rho_Lc_\gamma(\rho_L,\mathrm{p}_{r,L})}+u_L-u_R\Big)^+ \\
\tfrac{a_L}{\rho_L} &=& c_\gamma(\rho_L,\mathrm{p}_{r,L}) + \tfrac{\gamma+1}{2}\Big(\tfrac{\mathrm{p}_{r,R}-\mathrm{p}_{r,L}}{a_R}+u_L-u_R\Big)^+
\end{array}\right., & \quad \mbox{else,}  
\end{align}
\end{subequations}

\noindent with $\gamma$ defined from \cref{eq:gamma-max}.

This numerical scheme is based on a relaxation approximation using evolution equations for a relaxation pressure in place of $\mathrm{p}_r$ and for $a$ in \cref{eq:sol-PR-relax} in place of the Lagrangian sound speed $\rho c_\gamma(\rho,\mathrm{p}_r)$. The Riemann solution contains only linearly degenerate fields and \cref{eq:bouchut-relax-ARS-xt} follows from projection of the initial data onto an equilibrium manifold. We refer to \cite[Sec.~ 2.4]{bouchut_04} or \cite{coquel_etal_relax_fl_sys_12} for complete introductions and in-depth analyses. In particular, the analysis in \cite[Sec.~ 2.4]{bouchut_04} proves the ES, robustness and the minimum principle on entropy by reversing the roles of energy conservation and entropy inequality \cite{coquel_etal_01}. This technique also applies to the entropy $\rho\zeta({\bf w})$ and we may consider $\rho E_r=\rho e_r({\bf Y},\tau,\zeta,e_s,{\bf e}_v) + \tfrac{1}{2}{\bf v}\cdot{\bf v}$ as an entropy for the system defined by conservation laws for $(\rhob^\top,\rho{\bf v}^\top,\rho\zeta,\rho {\bf e}_v,\rho e_s)^\top$. Indeed, the convexity of $E_r({\bf Y},\tau,\zeta,e_s,{\bf e}_v)$ is equivalent to the convexity of $\zeta({\bf Y},\tau,e_r,e_s,{\bf e}_v)$ since from \cref{eq:zeta-delta-zeta} $\partial_{e_r}\zeta=-\tfrac{r({\bf Y})}{(\gamma-1)e_r}<0$ \cite[chap. 2]{godlewski-raviart}. % \red{(see \cref{sec:appendix_convex_properties})}.

The Bouchut scheme guaranties positivity of $\rho$ and $e_r$ under the CFL condition \cref{eq:3pt-scheme-CFL} with $\lambda = \max(|S_L|,|S_R|)$. Positivity of $\rhob=\rho{\bf Y}$, $e_{s}$ and $e_{v}$ then follows by cell-averaging the Riemann solution \cref{eq:sol-PR-relax}. Likewise, the discrete minimum maximum principle \cref{eq:relax-mass-minmax} holds for the same reason. Applying \cref{eq:flux_h_from_H}, the numerical flux for \cref{eq:3pt-scheme-a} reads
\begin{equation}\label{eq:bouchut-flux}%{eq:relax-ARS-xt}
 {\bf h}^{r}({\bf u}_L,{\bf u}_R,{\bf n}) = \left\{ \begin{array}{ll}
 {\bf f}({\bf u}_L)\cdot{\bf n},      & \tfrac{x}{t} < S_L, \\
 {\bf f}({\bf u}_L^\star)\cdot{\bf n}, & S_L<\tfrac{x}{t}<u^\star,\\
 {\bf f}({\bf u}_R^\star)\cdot{\bf n}, & u^\star<\tfrac{x}{t}<S_R,\\
 {\bf f}({\bf u}_R)\cdot{\bf n},      & S_R < \tfrac{x}{t}, \end{array} \right.
\end{equation}

\noindent with ${\bf u}_X^\star=(\rho_X^\star {\bf Y}_X^\top,\rho_X^\star {\bf v}_X^{\star\top},\rho_X^\star E_{X}^\star, \rho_X^\star {\bf e}_{v,X}^\top)^\top$, for $X=L,R$, and
%\begin{subequations}%\label{eq:sol-PR}
\begin{align*}
%  {\bf v}_L^\star &= {\bf v}_L+(u^\star-u_L){\bf n}, \quad {\bf v}_R^\star = {\bf v}_R+(u^\star-u_R){\bf n},  \label{eq:sol-PR-relax-a} \\
  u^\star  &= \frac{a_Lu_L+a_Ru_R+\mathrm{p}_{L}-\mathrm{p}_{R}}{a_L+a_R}, \quad \mathrm{p}^\star = \frac{a_R\mathrm{p}_{L}+a_L\mathrm{p}_{R}+a_La_R(u_L-u_R)}{a_L+a_R},  \\
 % \frac{1}{\rho_L^\star} &= \frac{1}{\rho_L} + \frac{u^\star-u_L}{a_L},\quad \frac{1}{\rho_R^\star} = \frac{1}{\rho_R} + \frac{u_R-u^\star}{a_R}, \\
  E_{L}^\star &= E_{L} - \frac{\mathrm{p}^\star u^\star - \mathrm{p}_{L}u_L}{a_L},\quad E_{R}^\star = E_{R} - \frac{\mathrm{p}_{R}u_R-\mathrm{p}^\star u^\star}{a_R},
\end{align*}
%\end{subequations}

\noindent $\p_X=\p({\bf Y}_X,\rho_X,e_{t_X})$ for $X=L,R$, the other quantities being defined in \cref{eq:sol-PR-relax} and the wave speed estimates are defined from \cref{eq:wave-estimate} with $\p_r=\p$ from \cref{eq:flux_h_from_H}.

%\begin{proposition}\label{th:ES-relax-flux}
% The numerical flux \cref{eq:relax_flux,eq:relax-ARS-xt,eq:sol-PR-relax,eq:wave-estimate} for \cref{eq:HRM_PDEs-a} is ES \red{in the sense \cref{eq:entropy_stable_flux}} for the pair \cref{eq:HRM_entropy_pair}. Moreover, the associated three-point scheme \cref{eq:3pt-scheme-a} is robust under the condition
%\begin{equation}\label{eq:3-pt-CFL_cond}
% \tfrac{\Delta t}{\Delta x}\max_{j\in\mathbb{Z}}|\lambda({\bf U}_j^{n})| < \frac{1}{2}, \quad |\lambda({\bf u})| := |{\bf v}\cdot{\bf n}|+\tfrac{a}{\rho}.
%\end{equation}
%\end{proposition}

%
%\begin{proof}
%  From \Cref{th:ES-3-pt-scheme}, we need to show that the associated three-point scheme \cref{eq:3pt-scheme-relax} is ES and robust for the relaxation system \cref{eq:relax-nrj-HRM-sys} with $\epsilon\rightarrow\infty$. 
%\red{Now, let $\tilde\zeta(\tau,e_r)=\zeta(\Yb,\tau,e_r,e_s,e_v)$ from \cref{eq:functions-zeta-R-E} with $\tau=\tfrac{1}{\rho}$ and $e_r=E_r-e_c$. Again, the change of variables from $\rhob$ to $(\Yb,\tau)$ is justified since \cref{eq:saturation_cond} is satisfied and $\rho>0$. From \cref{eq:hessian_zeta} with \cref{eq:gamma-max} $\tilde\zeta$ is a strictly convex function of $(\tau,e_r)$ so $\rho\tilde\zeta(\tilde{\bf w})$ is strictly convex and from \cref{th:var-principle} $(\partial_{e_r}\tilde\zeta)_{\tau}<0$. Applying \cite[Prop.~2.21]{bouchut_04}, the three-point scheme \cref{eq:3pt-scheme-a} restricted to the components $\tilde{\bf w}$ satisfies a discrete entropy inequality for the pair $(\rho\tilde\zeta,\rho\tilde\zeta{\bf v})$.}
%\end{proof}

%
\subsection{Wall boundary conditions}\label{sec:ES_BC}

Let consider the case of an impermeability condition, ${\bf v}\cdot{\bf n}=0$, at a wall $\Gamma_w\subset\partial\Omega_h$ which is commonly imposed through the use of mirror state ${\bf u}^+=(\rhob,-\rho{\bf v}^\top,\rho E,\rho {\bf e}_v^\top)^\top$. For elements $\kappa$ adjacent to a wall, we modify \cref{eq:2D-FV-scheme} in the following way
\begin{equation*}%\label{eq:2D-FV-scheme}
 {\bf U}_\kappa^{n+1} = {\bf U}_\kappa^{n} - \frac{\Delta t^{(n)}}{|\kappa|}\Big(\sum_{e\in\partial\kappa\backslash\Gamma_w} |e|{\bf h}({\bf U}_\kappa^{n},{\bf U}_{\kappa_e^+}^{n},{\bf n}_e) + \sum_{e\in\partial\kappa\cap\Gamma_w} |e|{\bf h}^{r}(0,{\bf U}_\kappa^{n},{\bf U}_{\kappa}^{n^+},{\bf n}_e) \Big),
\end{equation*}

\noindent where ${\bf h}$ corresponds to one of the above numerical fluxes, ${\bf h}^{r}$ is pressure relaxation-based flux \cref{eq:bouchut-flux}, and the exponent $^+$ denotes the mirror state. The above scheme still can be written as a convex combination of updates of three-point schemes \cref{eq:3pt-scheme-a} as in \cref{eq:2D-FV-scheme-update-3pt}, so the entropy inequality \cref{eq:2D-FV-scheme-ineq} holds.

Using the mirror state we have from \cref{eq:sol-PR-relax} that $a_L=a_R=a$, $\p_L=\p_R=\p$ given by \cref{eq:mixture_eos}, ${\bf Y}_L={\bf Y}_R={\bf Y}$ so the left and right states have the same thermodynamics. We thus obtain ${\bf h}^{r}(0,{\bf u},{\bf u}^+,{\bf n})=(0,0,\p^\star{\bf n},0,0)^\top$ with $\p^\star=\p+a{\bf v}\cdot{\bf n}$, and $a=\sqrt{\gamma({\bf Y})\rho\p}+(\gamma({\bf Y})+1)\rho({\bf v}\cdot{\bf n})^+$. This boundary condition is consistent with the impermeability condition and enforces the pressure through the characteristic associated to the eigenvalue ${\bf v}\cdot{\bf n}+c({\bf Y},e_t)$.

Note that from \cref{th:ES-3-pt-scheme} the entropy flux vanishes at wall boundary interfaces since by $Q({\bf u},{\bf u}^+,{\bf n})=Z\big({\cal P}({\bf u}),{\cal P}({\bf u}^+),{\bf n}\big)=\eta({\bf u}){\bf v}\cdot{\bf n}$ evaluated at $\bcW^{r}\big(0;{\cal P}({\bf u}),{\cal P}({\bf u}^+),{\bf n}\big)$ for which ${\bf v}\cdot{\bf n}=u^\star=0$. Assuming either compactly supported solutions, or using ES boundary conditions from \cite{Svard_Ozcan_14} at far-field boundaries, we end with the following global estimate on the entropy:
\begin{equation*}
  \sum_{\kappa\in\Omega_h} |\kappa|\eta({\bf U}_\kappa^{n+1}) \leq \sum_{\kappa\in\Omega_h} |\kappa|\eta({\bf U}_\kappa^{n}) + C,
\end{equation*}

\noindent where $C$ is a constant that depends on boundary data. Using the strict convexity of the entropy $\eta({\bf u})$, one may use Dafermos' argument to prove $L^2$ stability of the solution \cite{Dafermos2016} (see e.g. \cite[Th.~2.6]{Svard_Ozcan_14}).

%\red{\hrule  height 2mm}

%
%%%
%
\section{Numerical experiments}\label{sec:num_xp}

In this section we present numerical experiments, obtained with the CFD code {\it Aghora} developed at ONERA \cite{renac_etal15}, on problems in one and two space dimensions in order to illustrate the performance of the schemes derived in this work. We use $\gamma=1.01\times\tfrac{5}{3}$ in \cref{eq:godunov_flux} to ensure the inequality in \cref{eq:gamma-max}, while we set $\gamma=\tfrac{5}{3}$ in \cref{eq:wave-estimate} and increase the wave speed estimates $S_{X=L,R}$ by a factor $1.01$ in \cref{eq:hll_flux,eq:bouchut-flux}. The time step is evaluated through \cref{eq:3pt-scheme-CFL}. For 2D simulations, we impose the freestream values at supersonic inlets and extrapolate variables at supersonic outlets, while we apply the impermeability boundary condition in \cref{sec:ES_BC} at walls. Steady computations are obtained by using local time stepping until the $\ell^2$ norm of the vector of residuals has decreased by a factor $10^{10}$. Additional results obtained for a monocomponent perfect gas with an equivalent adiabatic exponent are also reported for the sake of comparison: we use either the Roe solver \cite{Roe_1981} with entropy fix \cite{harten_hyman_83} (referred to as ROE-PG), or the HLL solver \cite{hll_83} with the two-rarefaction approximation \cite[Ch.~9]{toro_book} for computing the wave speeds (referred to as HLL-PG).

\subsection{One-dimensional shock-tube problems}

We first consider the convection of a material interface separating air ($\rho_L=3.607655$, $Y_{1,L}=1-Y_{2,L}=1$, $e_{v_1,L}=1.8070291$, $\gamma_1=1.4$) in thermal disequilibrium from helium ($\rho_R=0.5$, $Y_{1,R}=1-Y_{2,R}=0$, $e_{v_1,R}=0$, $\gamma_2=\tfrac{5}{3}$) in a flow with pressure $\p_L=\p_R=1$ and velocity $u_L=u_R=1$. Results are shown in \cref{fig:solution_RP_contact} and highlight convergence of the three schemes with some more smearing of the contact by the HLL scheme as expected.

\begin{figure}
\begin{bigcenter}
\subfloat{\epsfig{figure=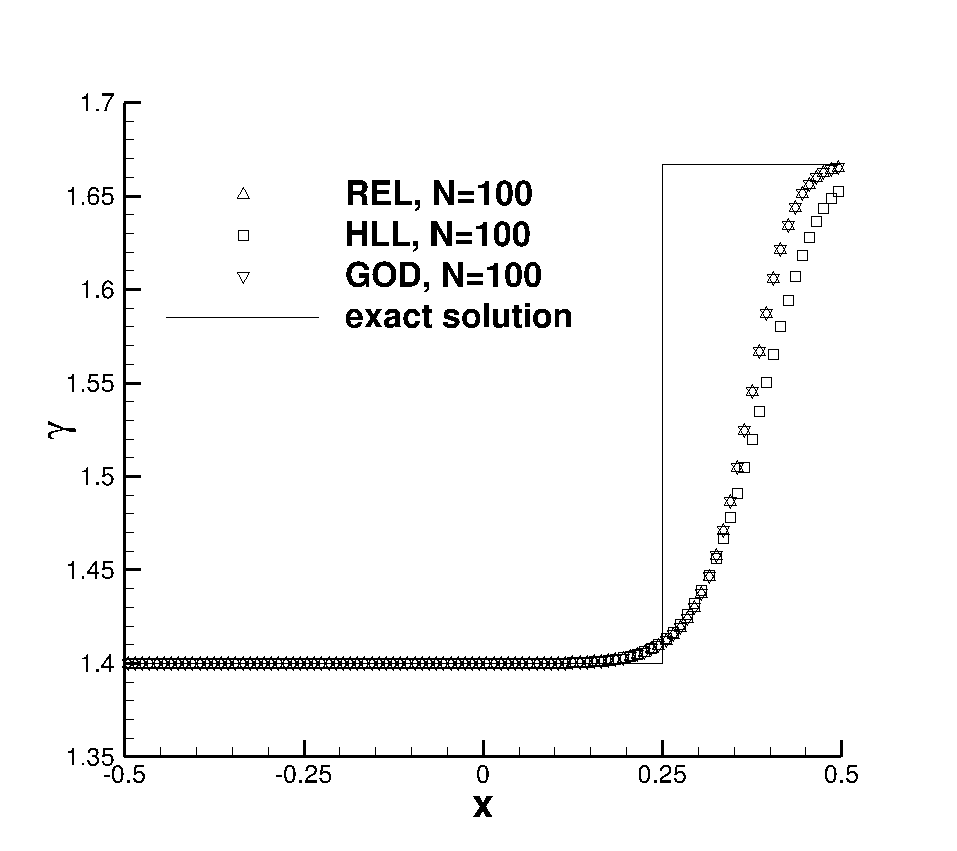,width=3.5cm} \hspace{-0.45cm}}
\subfloat{\epsfig{figure=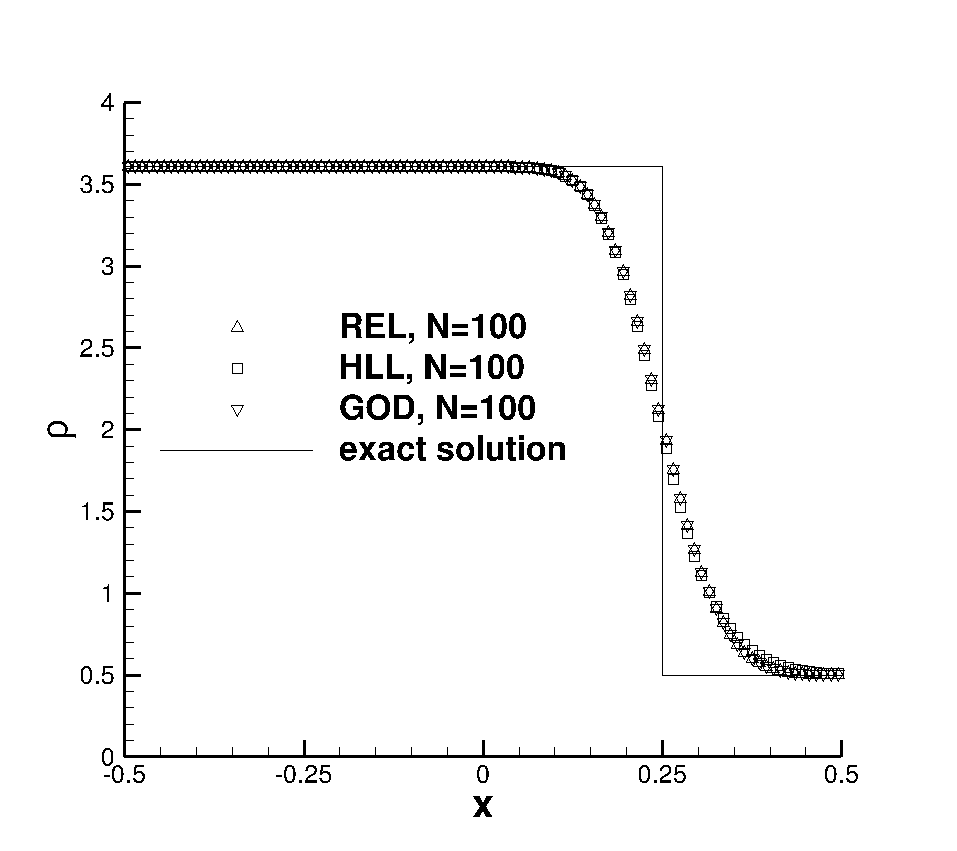,width=3.5cm} \hspace{-0.45cm}}
\subfloat{\epsfig{figure=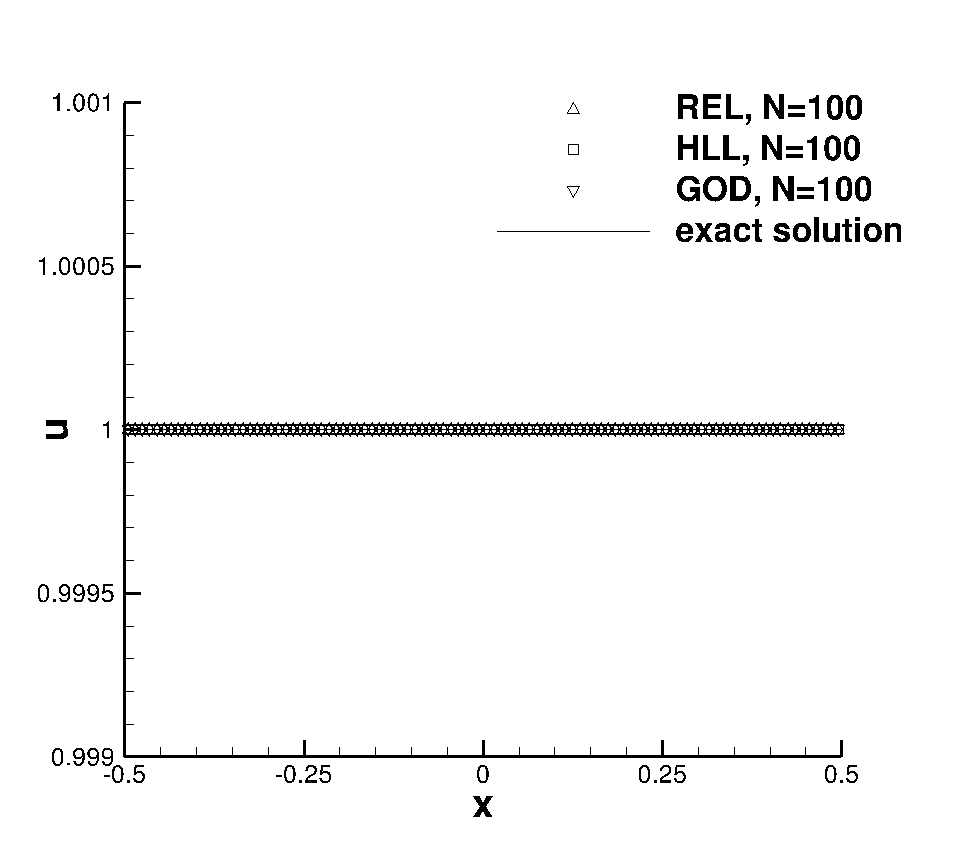,width=3.5cm} \hspace{-0.45cm}}
\subfloat{\epsfig{figure=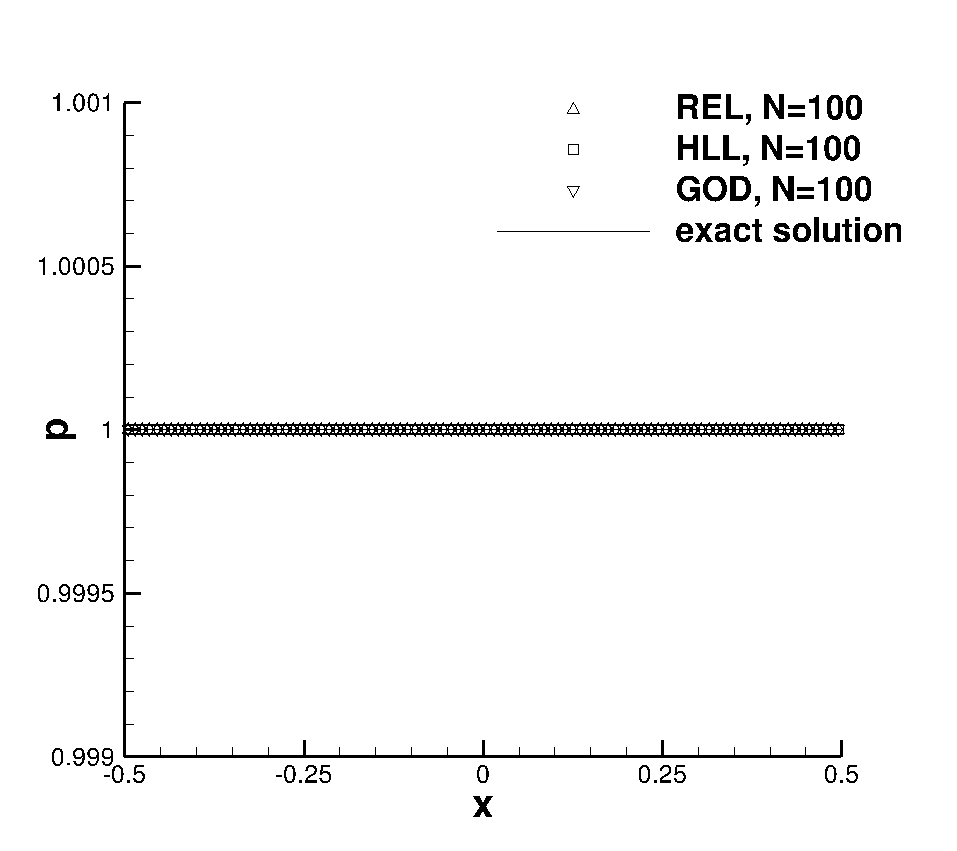,width=3.5cm}}\\ 
\setcounter{subfigure}{0}
\subfloat[$\gamma({\bf Y})$]{\epsfig{figure=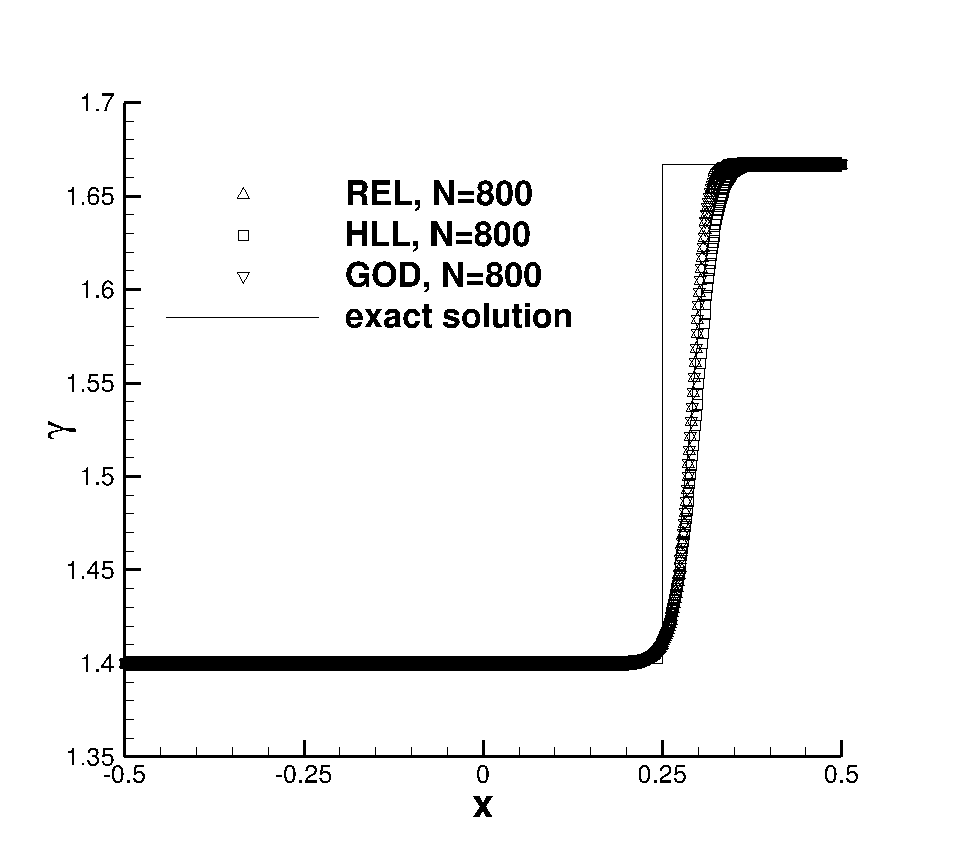,width=3.5cm} \hspace{-0.45cm}}
\subfloat[$\rho$]{\epsfig{figure=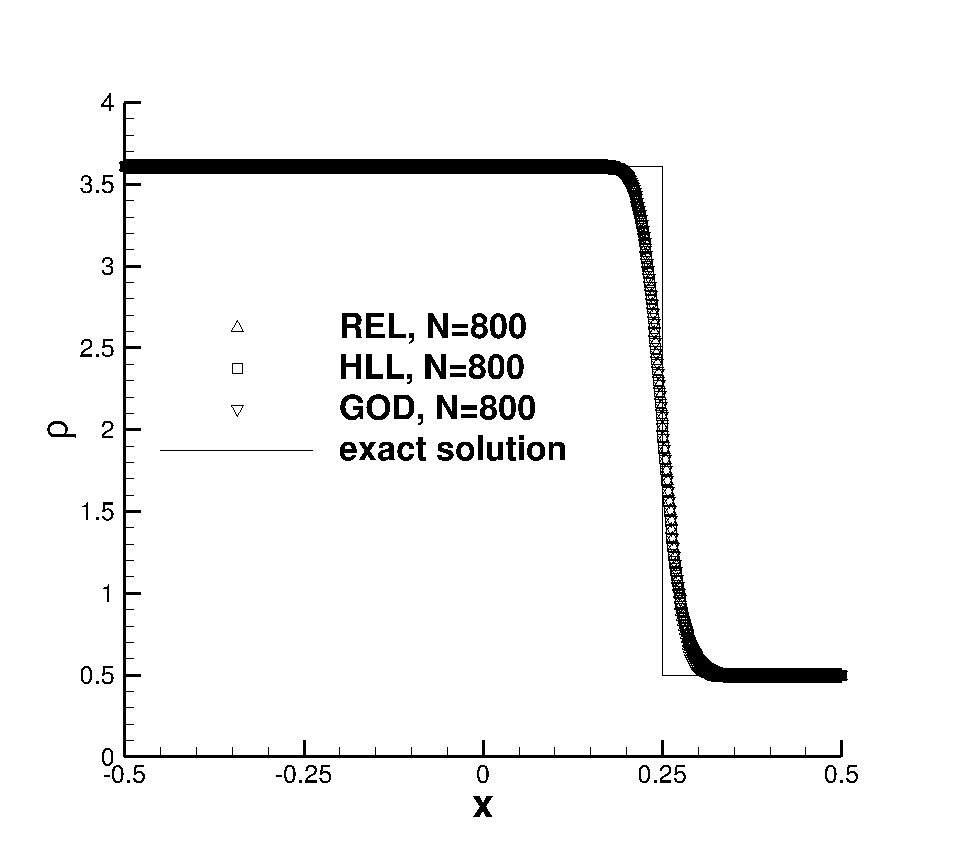,width=3.5cm} \hspace{-0.45cm}}
\subfloat[$u$]{\epsfig{figure=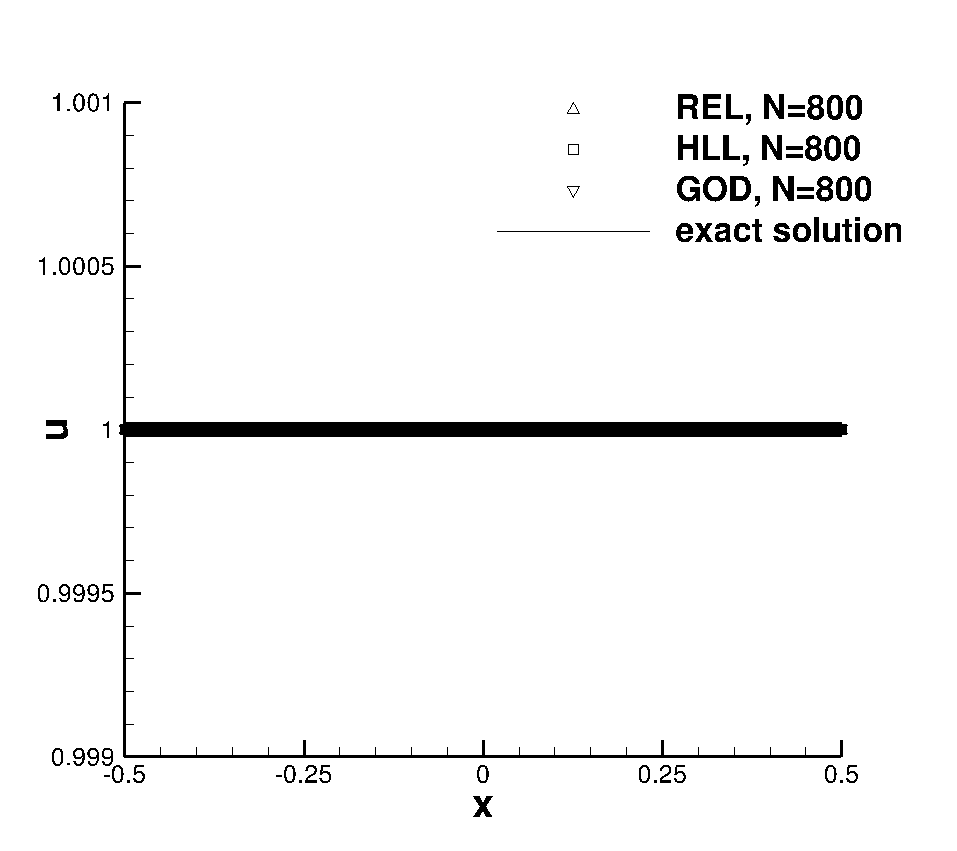,width=3.5cm} \hspace{-0.45cm}}
\subfloat[$\p$]{\epsfig{figure=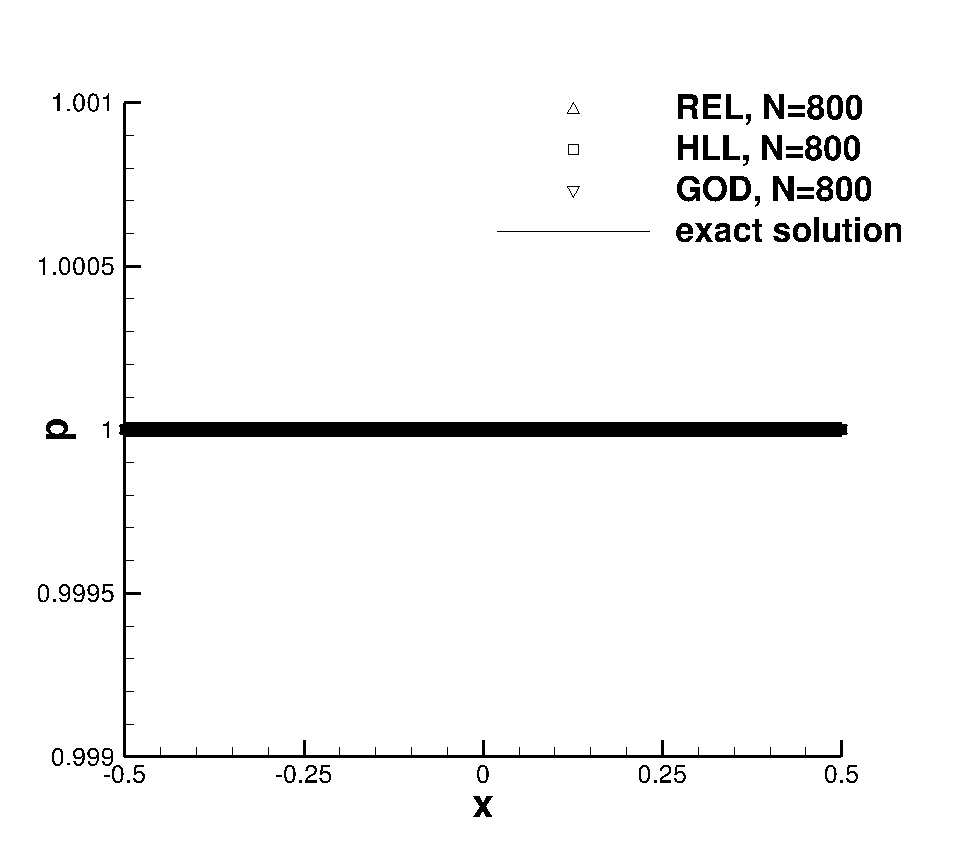,width=3.5cm}} 
\caption{Convection of a material interface: results obtained with the numerical fluxes \cref{eq:bouchut-flux} (REL), \cref{eq:hll_flux} (HLL), and \cref{eq:godunov_flux} (GOD), on two grids with $N=100$ and $N=800$ elements.}
\label{fig:solution_RP_contact}
\end{bigcenter}
\end{figure}

We now consider a shock tube problem adapted from \cite{liou_etal_split_RG_90} initially separating regions with large pressure and temperature ratios: $u_L=u_R=0$, $\p_L=100\p_R=100$bars, and $\T_L=30\T_R=9000$K. We consider air in thermal equilibrium with a 5 species model with a uniform composition $Y_{N_2}=0.7543$, $Y_{O_2}=0.2283$, $Y_{NO}=0.01026$, $Y_{N}=6.5\times10^{-7}$, and $Y_{O}=0.00713$. We neglect the enthalpies of formation so the gas is a perfect gas with an equivalent adiabatic exponent $\gamma({\bf Y})=1.402$ and we compare our results to the Roe solver for a perfect gas with an adiabatic exponent of $1.402$ (ROE-PG). Results in \cref{fig:solution_RP_air_liou} show that all solvers provide similar results and converge to the entropy weak solution. We stress that in spite of the crude assumption $\gamma>\tfrac{5}{3}$ in the numerical fluxes from \cref{sec:ex-ES-3pt-schemes} they offer similar accuracy as the Roe solver.

\begin{figure}
\begin{center}
\subfloat{\epsfig{figure=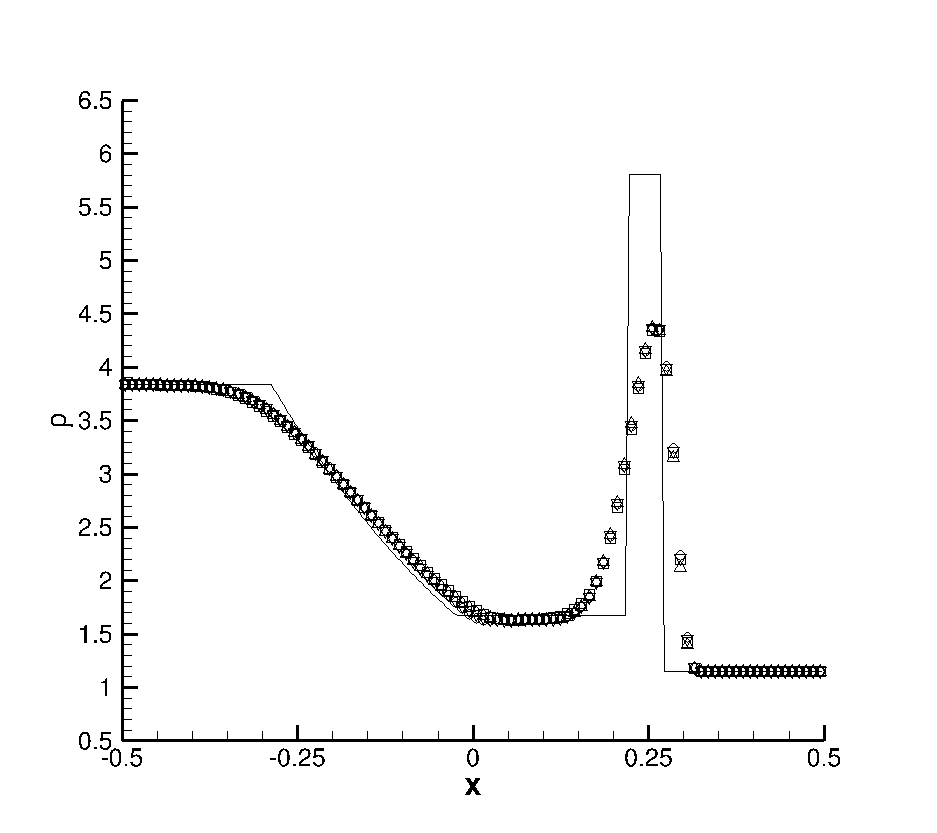,width=3.5cm} \hspace{-0.45cm}}
\subfloat{\epsfig{figure=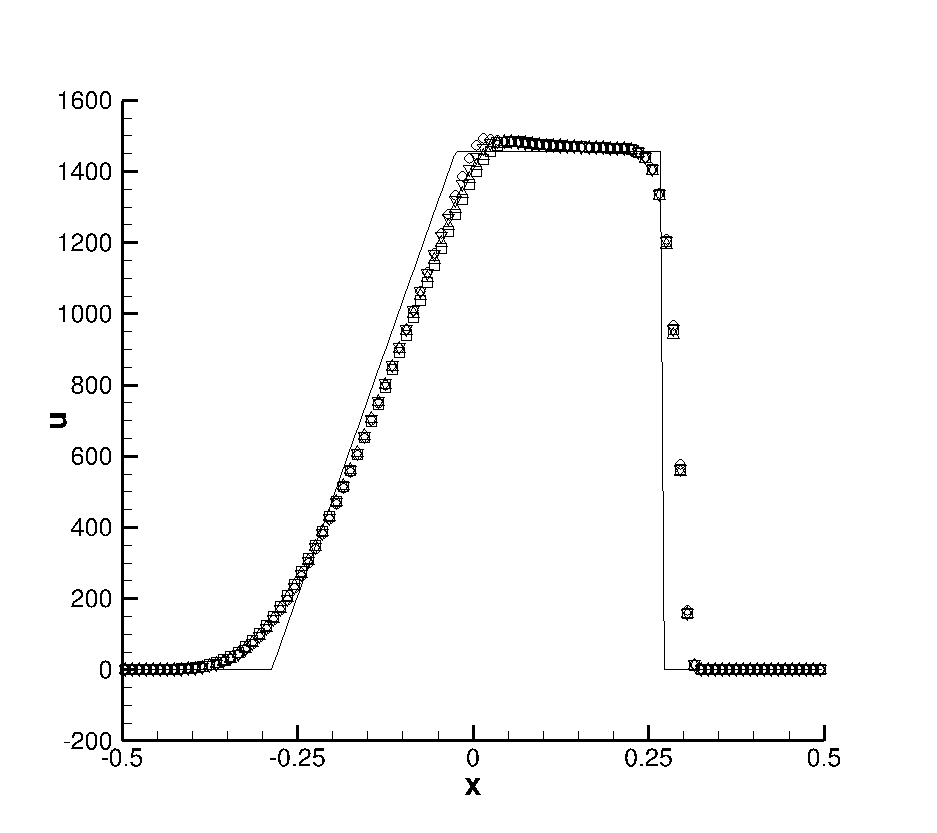,width=3.5cm} \hspace{-0.45cm}}
\subfloat{\epsfig{figure=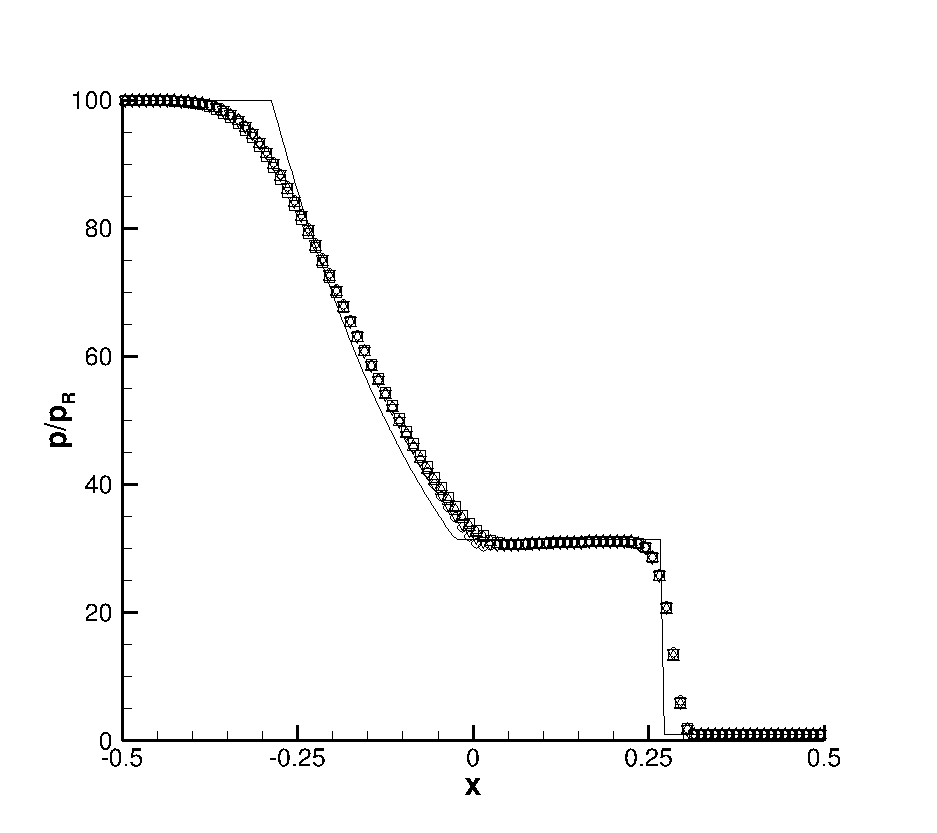,width=3.5cm} \hspace{-0.45cm}}
\subfloat{\epsfig{figure=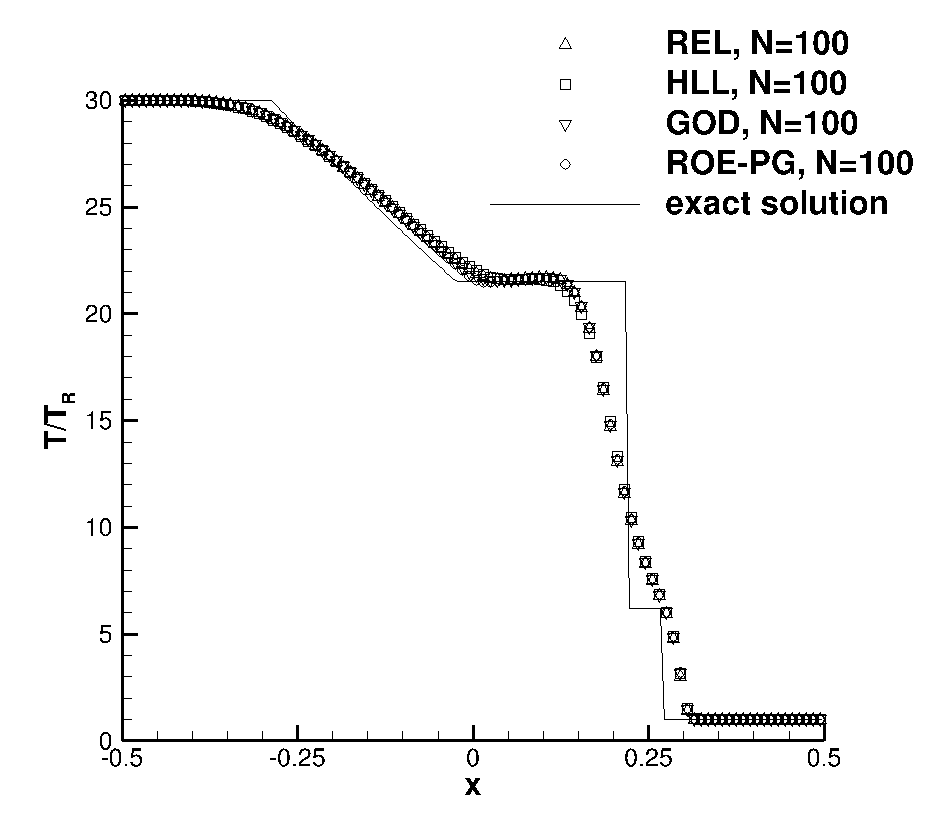,width=3.5cm}}\\ 
\setcounter{subfigure}{0}
\subfloat[$\rho$]{\epsfig{figure=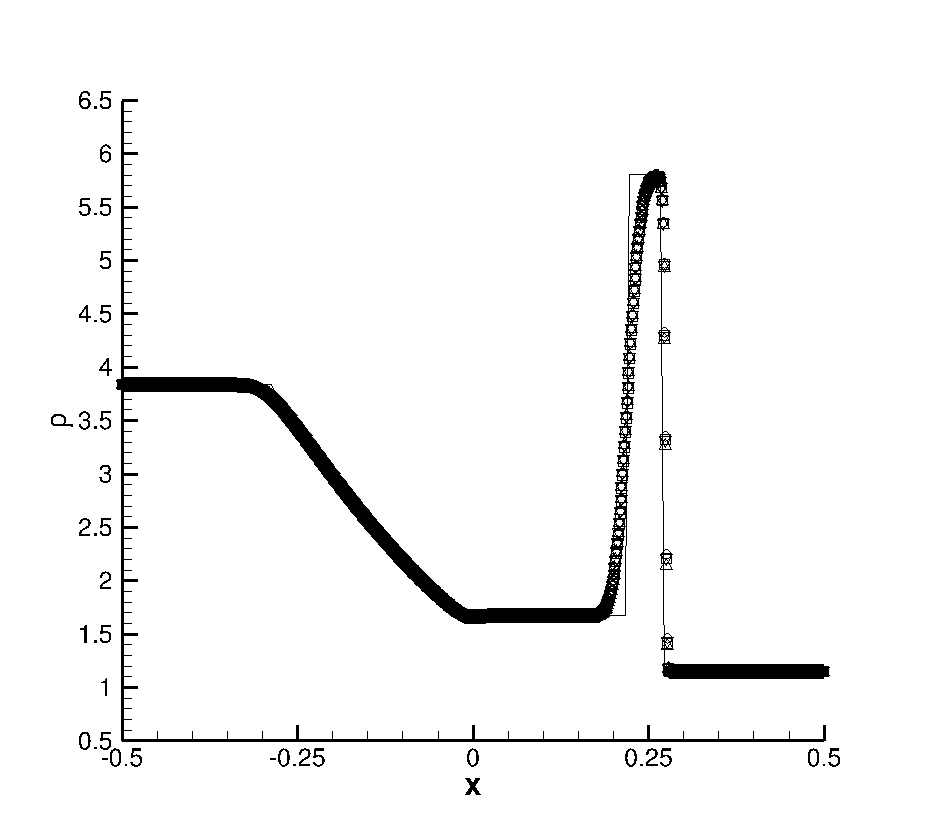,width=3.5cm} \hspace{-0.45cm}}
\subfloat[$u$]{\epsfig{figure=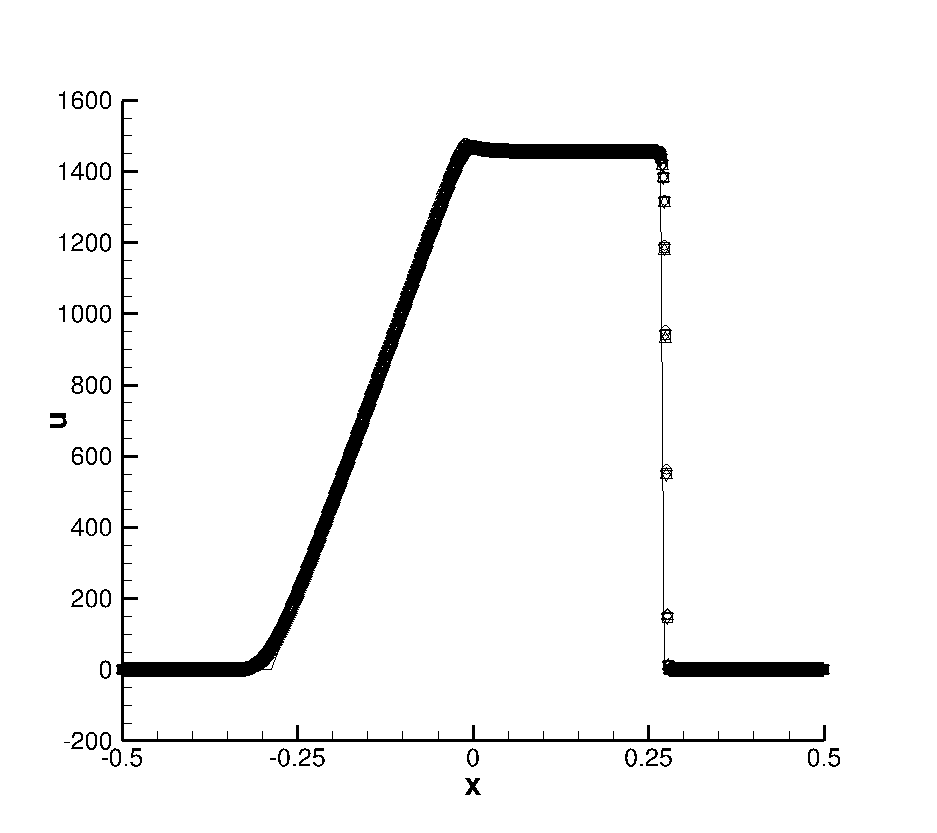,width=3.5cm} \hspace{-0.45cm}}
\subfloat[$\tfrac{\p}{\p_R}$]{\epsfig{figure=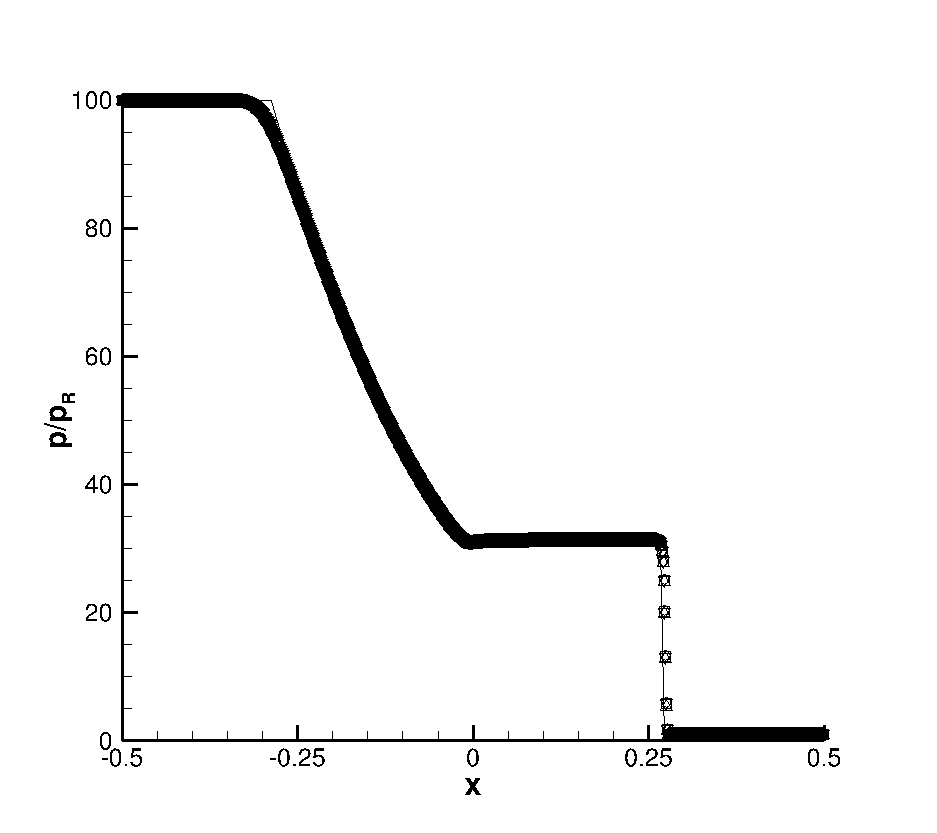,width=3.5cm} \hspace{-0.45cm}}
\subfloat[$\tfrac{\T}{\T_R}$]{\epsfig{figure=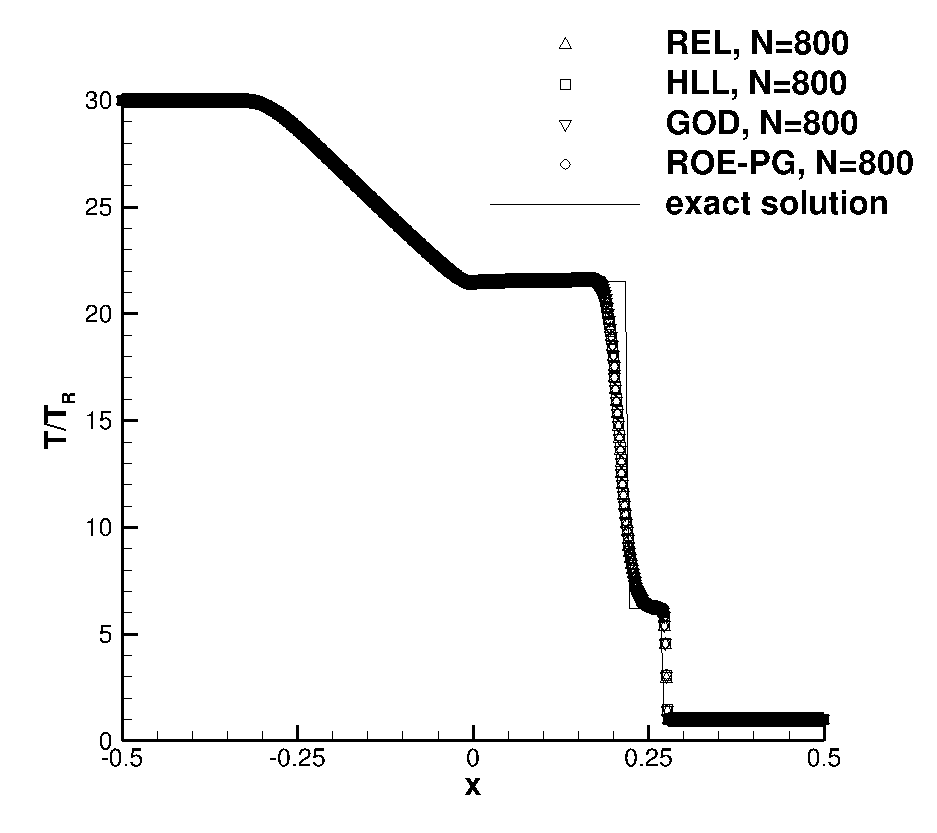,width=3.5cm}} 
\caption{Shock tube filled with air: results at time $t=1.5\times10^{-4}$ obtained with the numerical fluxes \cref{eq:bouchut-flux} (REL), \cref{eq:hll_flux} (HLL), \cref{eq:godunov_flux} (GOD), and the Roe solver for a monocomponent perfect gas (ROE-PG), on two grids with $N=100$ and $N=800$ elements.}
\label{fig:solution_RP_air_liou}
\end{center}
\end{figure}

\subsection{Hypersonic flow over a sphere}

We now consider the 2D hypersonic flow over a $\tfrac{1}{4}$ inch diameter sphere with the freestream conditions of Lobb's experiments \cite{LOBB1964519}. The freestream Mach number is $M_\infty=\tfrac{u_\infty}{c_\infty}=15.3$ with $\rho_\infty=7.83\times10^{-3}$kg/m$^3$ and $\T_\infty=293$K. The upstream flow is made of nitrogen and oxigen with $Y_{N_2}=0.79$, $Y_{O_2}=0.21$ which are uniform in the flow domain since we do not consider chemical reactions or molecular relaxation. The freestream vibration temperatures are taken at $\T_\infty$ for both species. A symmetry condition is imposed at the bottom boundary.

\begin{figure}
\begin{center}
\subfloat[]{\epsfig{figure=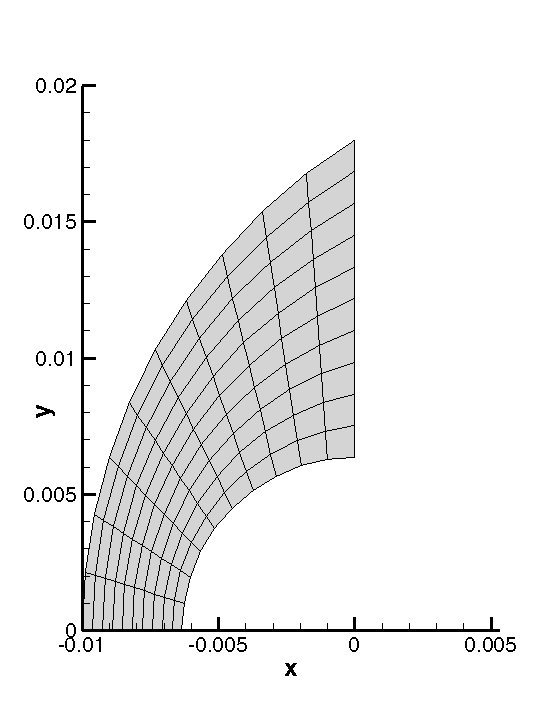,height=4.0cm}} \quad
\subfloat[]{\epsfig{figure=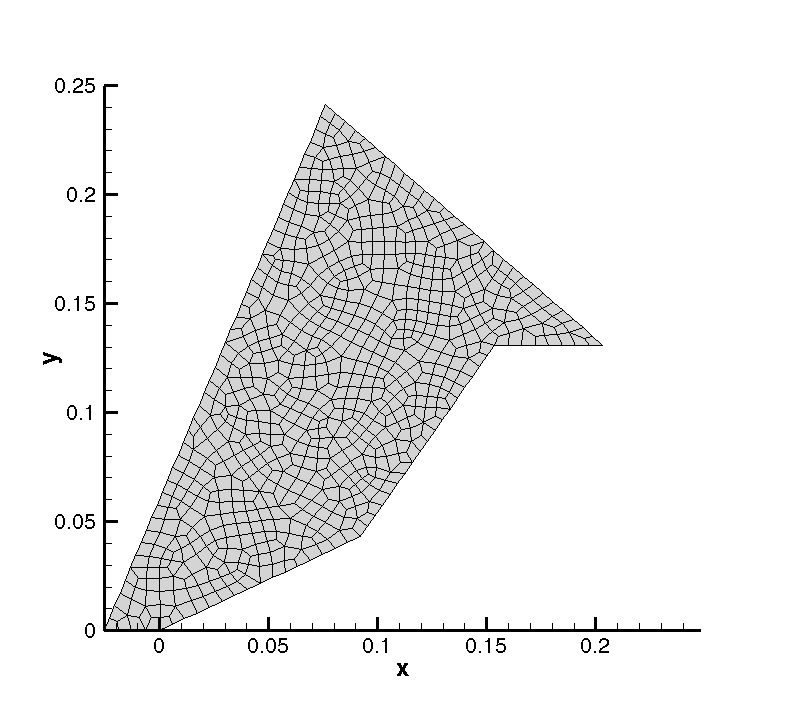, height=4.0cm}}
\caption{Exemples of meshes for the 2D simulations: hypersonic flows over (a) a sphere with $N=100$ elements, (b) a double cone with $N=749$ elements.}
\label{fig:2D_meshes}
\end{center}
\end{figure}

\Cref{fig:solution_lobb_M15} displays the contours of Mach number and translation-rotation temperature on two different grids with the three different schemes. Neglecting chemical reactions overestimates the shock distance to the sphere and prevents comparison to Lobb's experiments. 
As we do not consider chemical reactions or molecular relaxation, a partial validation of the current results can be obtained by comparison with simulations of an equivalent monocomponent perfect gas with adiabatic exponent $\gamma = 1.4$. The simulations of the considered gas mixture using the numerical flux \cref{eq:hll_flux} and of the equivalent perfect gas using the HLL flux for polytropic gas dynamics (HLL-PG) are reported in \cref{fig:solution_lobb_hll_hllms}. As expected, while some differences can be identified for underresolved simulations, the results are almost perfectly overlapping for sufficiently fine resolutions.

We are however interested in comparing results obtained with the different schemes and analysing their convergence under grid refinement. To this end we compare the convergence of shock distance from the sphere in \cref{fig:shock_position_lobb_M15}. We use grids with  $N=20\times20$, $40\times40$, $80\times80$ and  $160\times160$ elements for the simulation (see \cref{fig:2D_meshes}), while the reference distance $x_{ref}$ is evaluated  with the Godunov numerical flux on a fine mesh with $N=320\times320$. The results confirm convergence of the shock position and highlight close values obtained with the three different schemes.

\begin{figure}
\begin{center}
%\subfloat{\epsfig{figure=\localfigpathlobb sphere_lobb__high_mach_p0_rel_400_m.png,width=3.2cm} \hspace{-0.25cm}}
%\subfloat{\epsfig{figure=\localfigpathlobb sphere_lobb__high_mach_p0_rel_26k_m.png,width=3.2cm} \hspace{-0.25cm}}
%\subfloat{\epsfig{figure=\localfigpathlobb sphere_lobb__high_mach_p0_rel_400_t.png,width=3.2cm} \hspace{-0.25cm}}
%\subfloat{\epsfig{figure=\localfigpathlobb sphere_lobb__high_mach_p0_rel_26k_t.png,width=3.2cm}}\\ 
\subfloat{\epsfig{figure=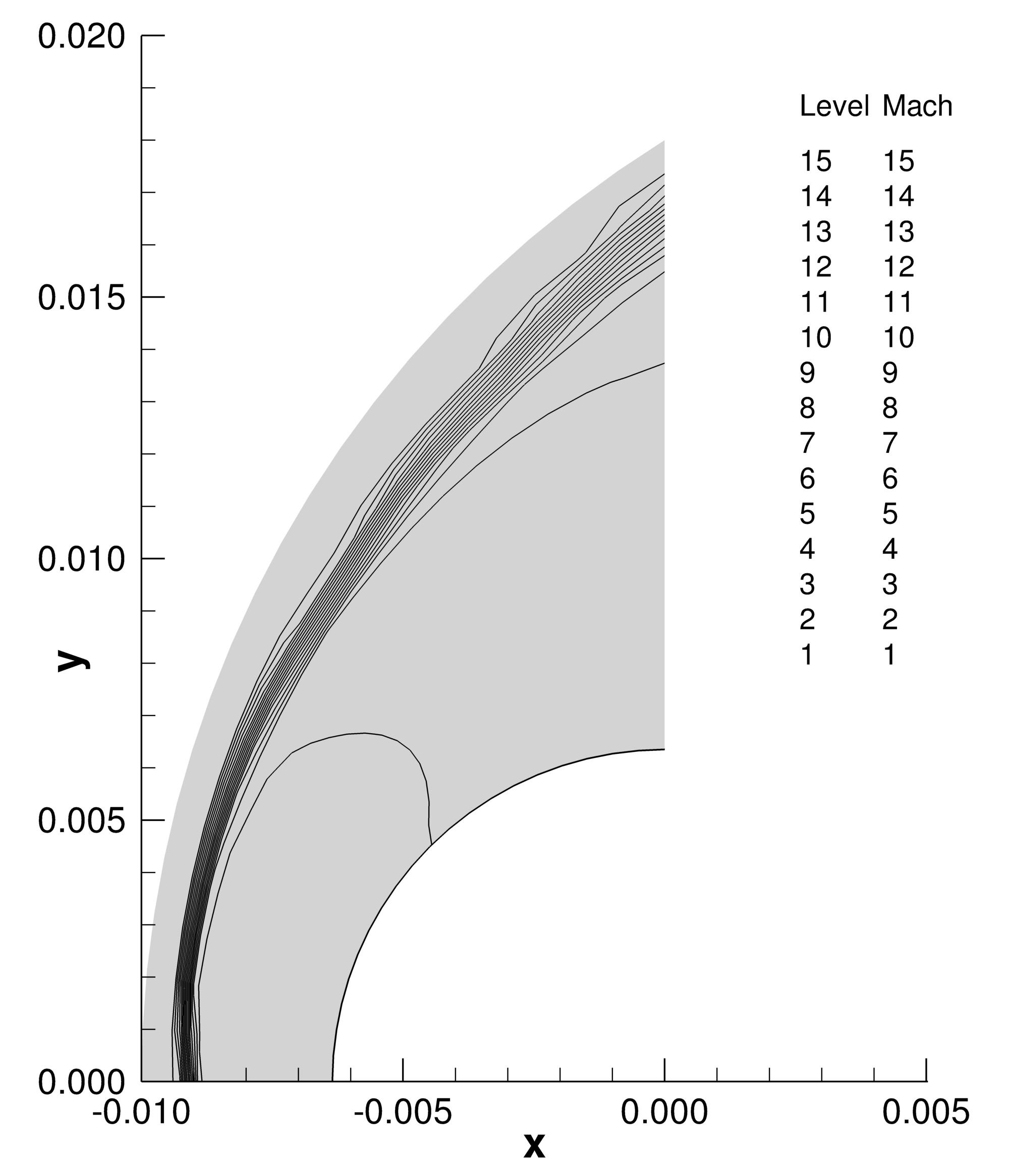,width=3.1cm} \hspace{-0.16cm}}
\subfloat{\epsfig{figure=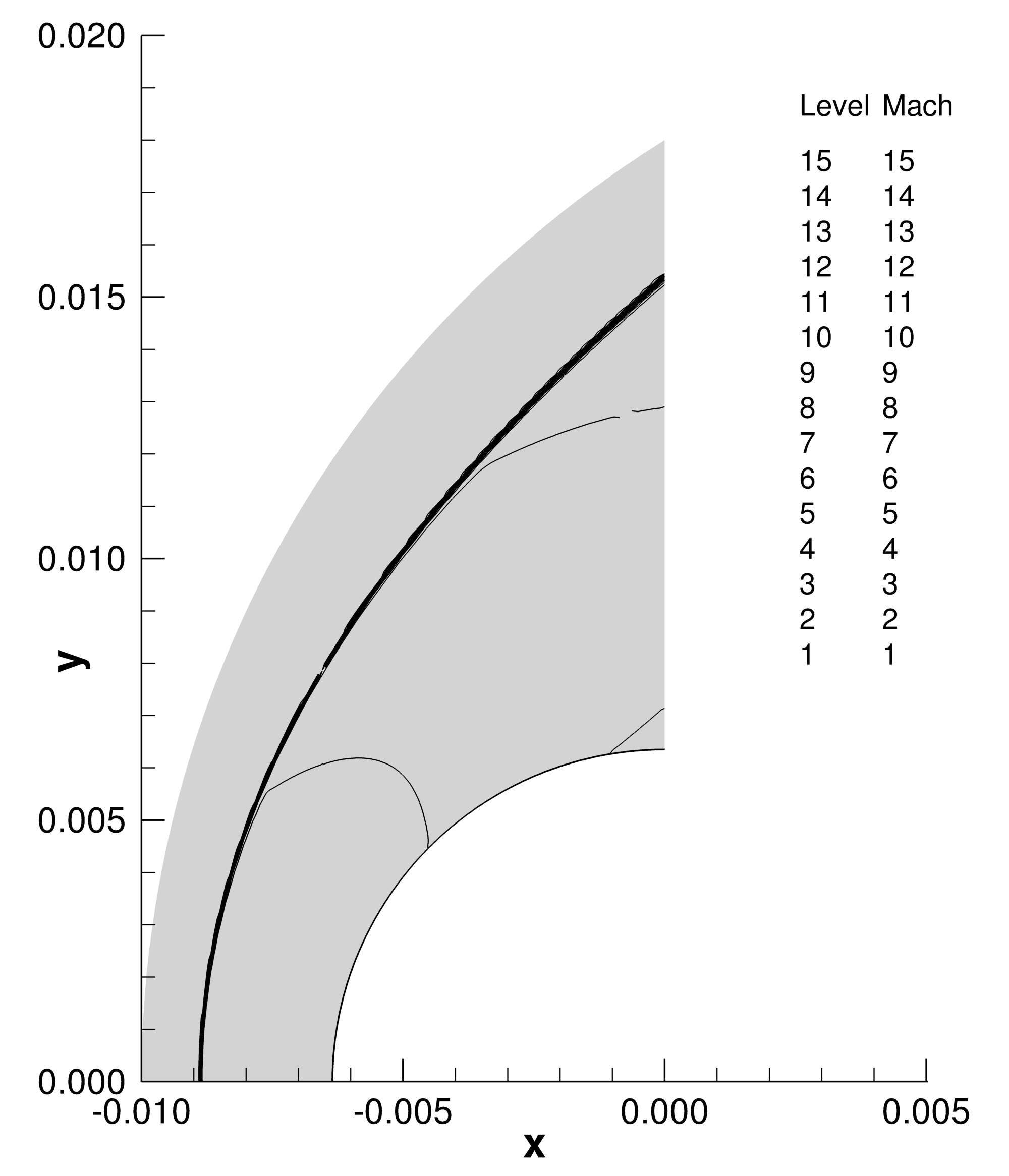,width=3.1cm} \hspace{-0.16cm}}
\subfloat{\epsfig{figure=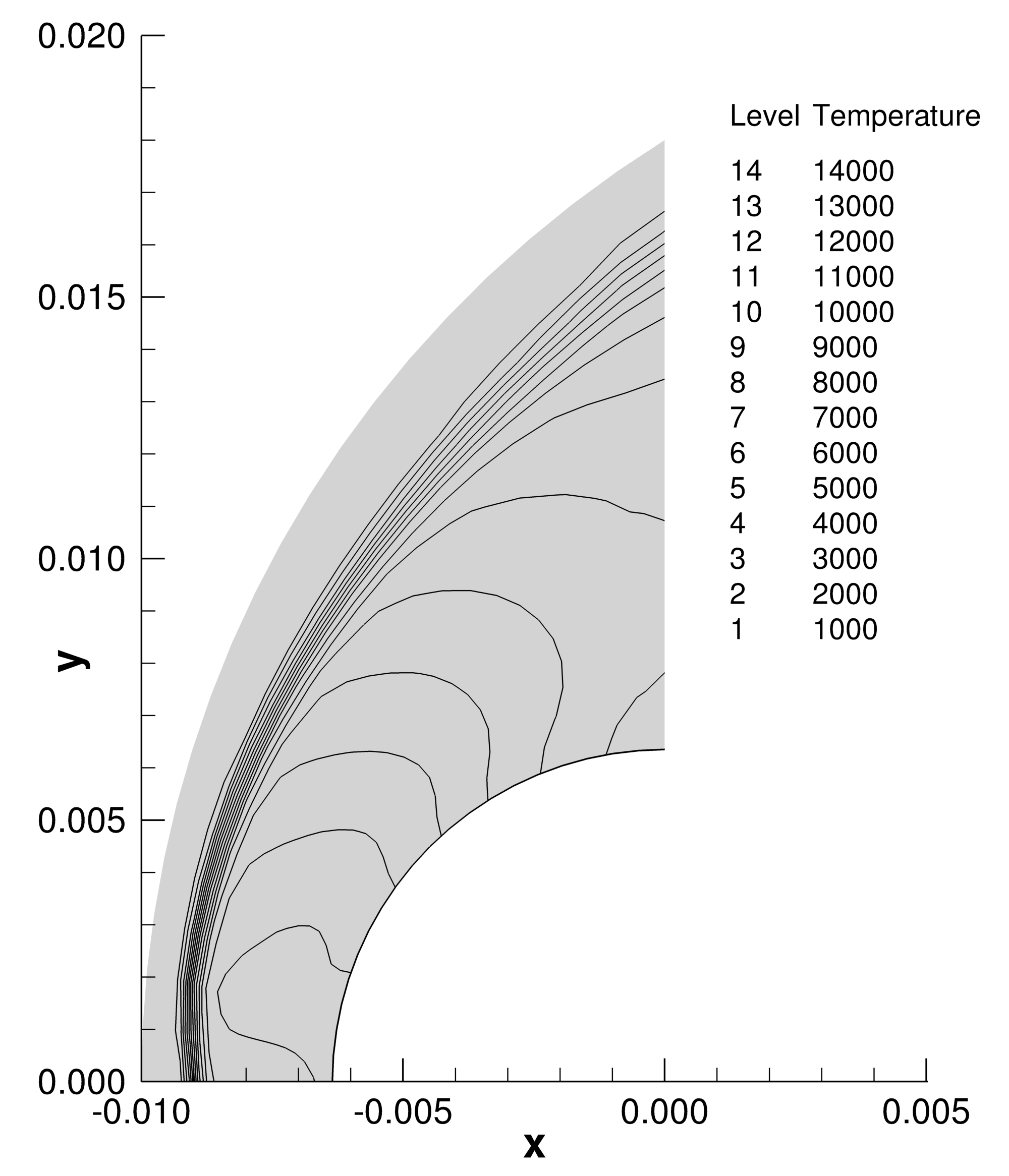,width=3.1cm} \hspace{-0.16cm}}
\subfloat{\epsfig{figure=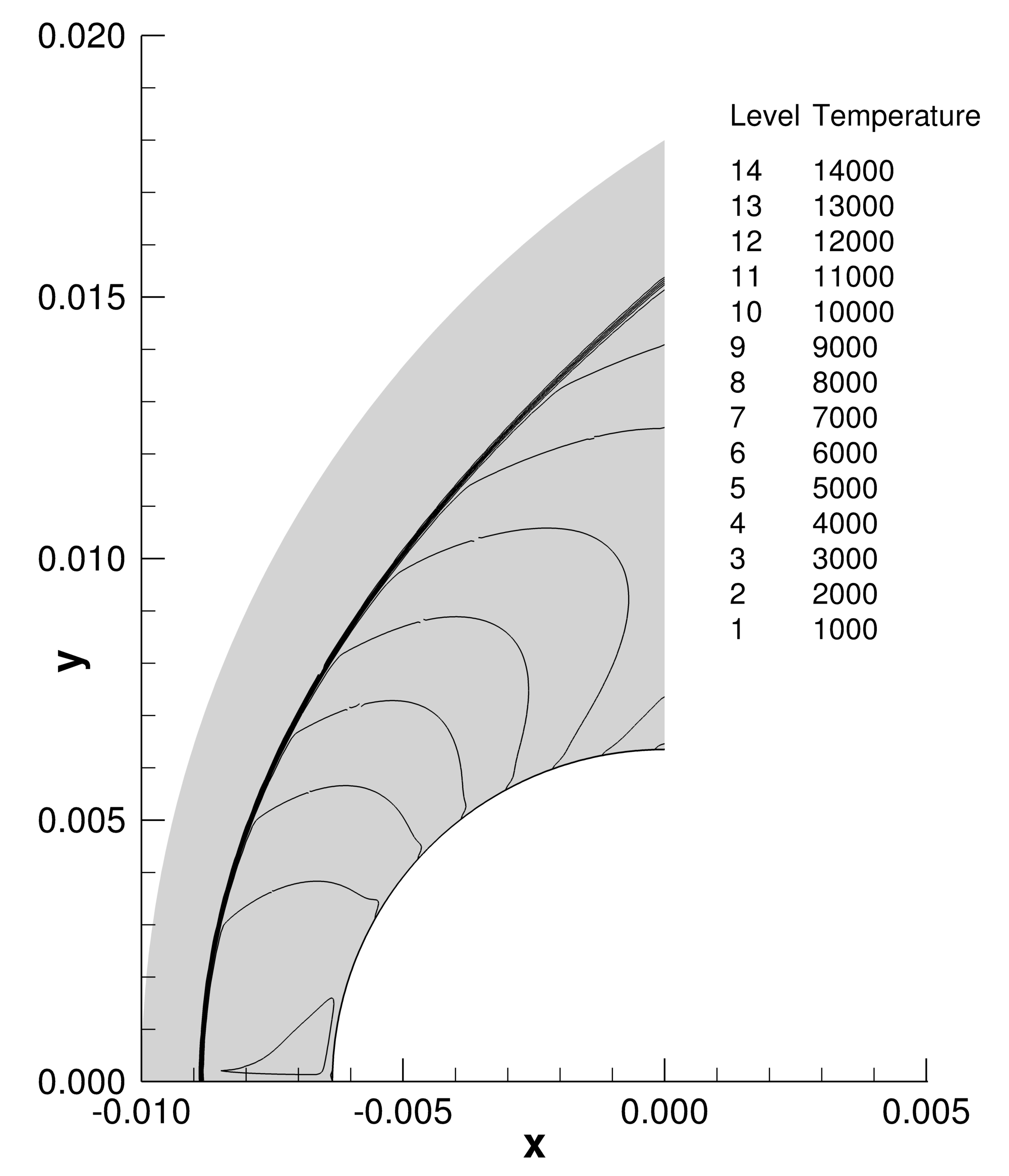,width=3.1cm}}\\
%
%\subfloat{\epsfig{figure=,width=3.2cm} \hspace{-0.25cm}}
%\subfloat{\epsfig{figure=,width=3.2cm} \hspace{-0.25cm}}
%\subfloat{\epsfig{figure=,width=3.2cm} \hspace{-0.25cm}}
%\subfloat{\epsfig{figure=,width=3.2cm}}\\ 
\subfloat{\epsfig{figure=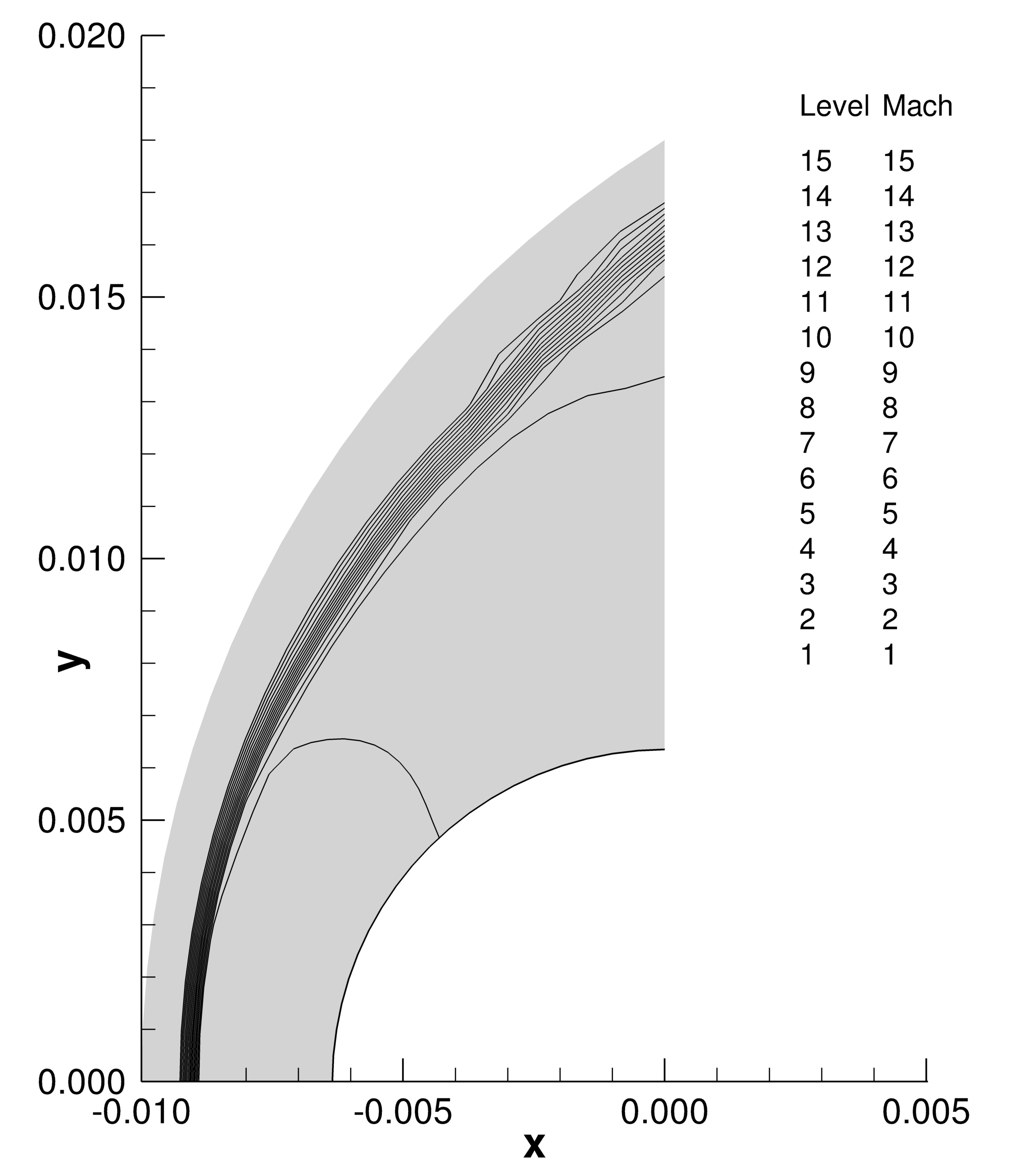,width=3.1cm} \hspace{-0.16cm}}
\subfloat{\epsfig{figure=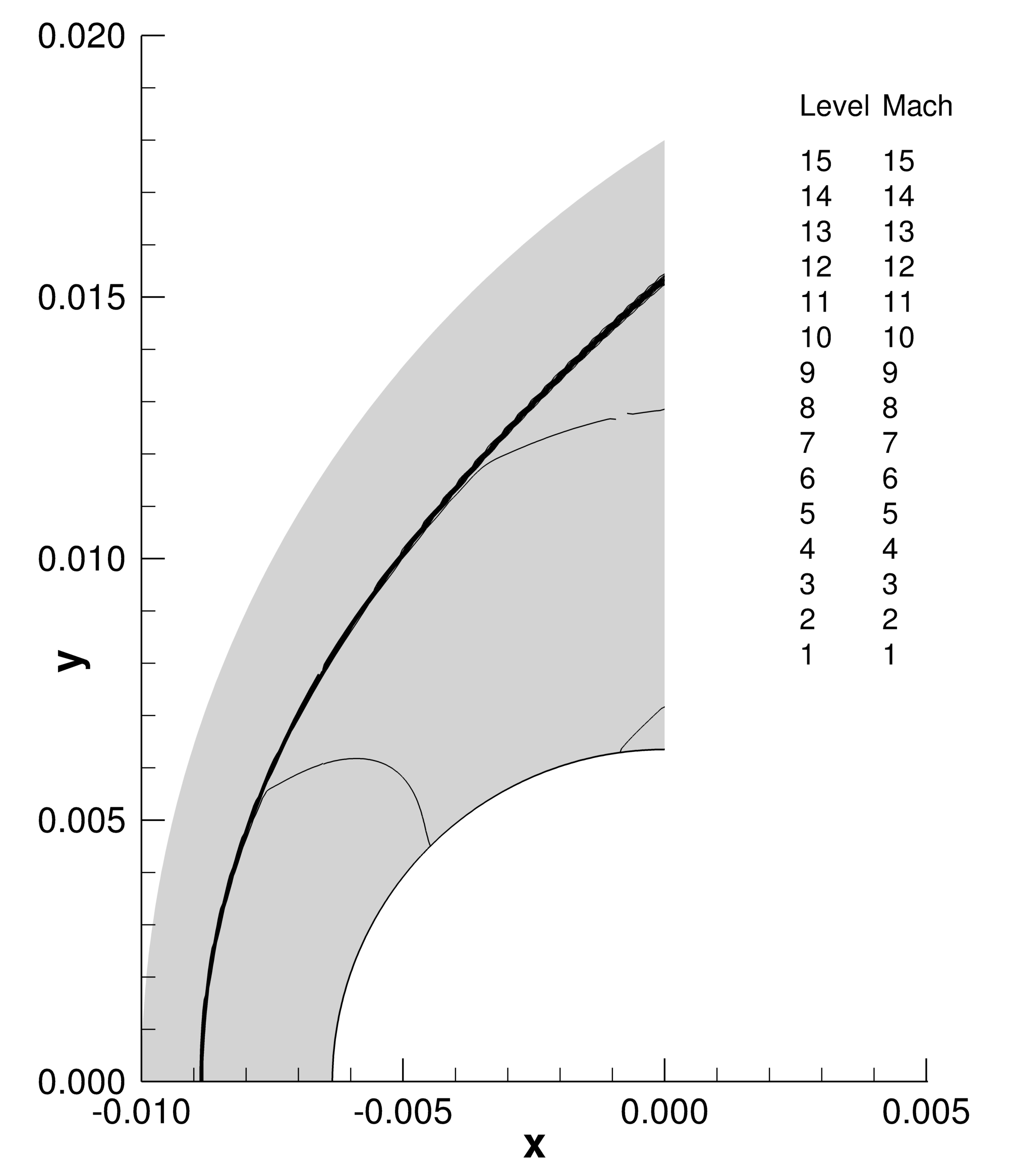,width=3.1cm} \hspace{-0.16cm}}
\subfloat{\epsfig{figure=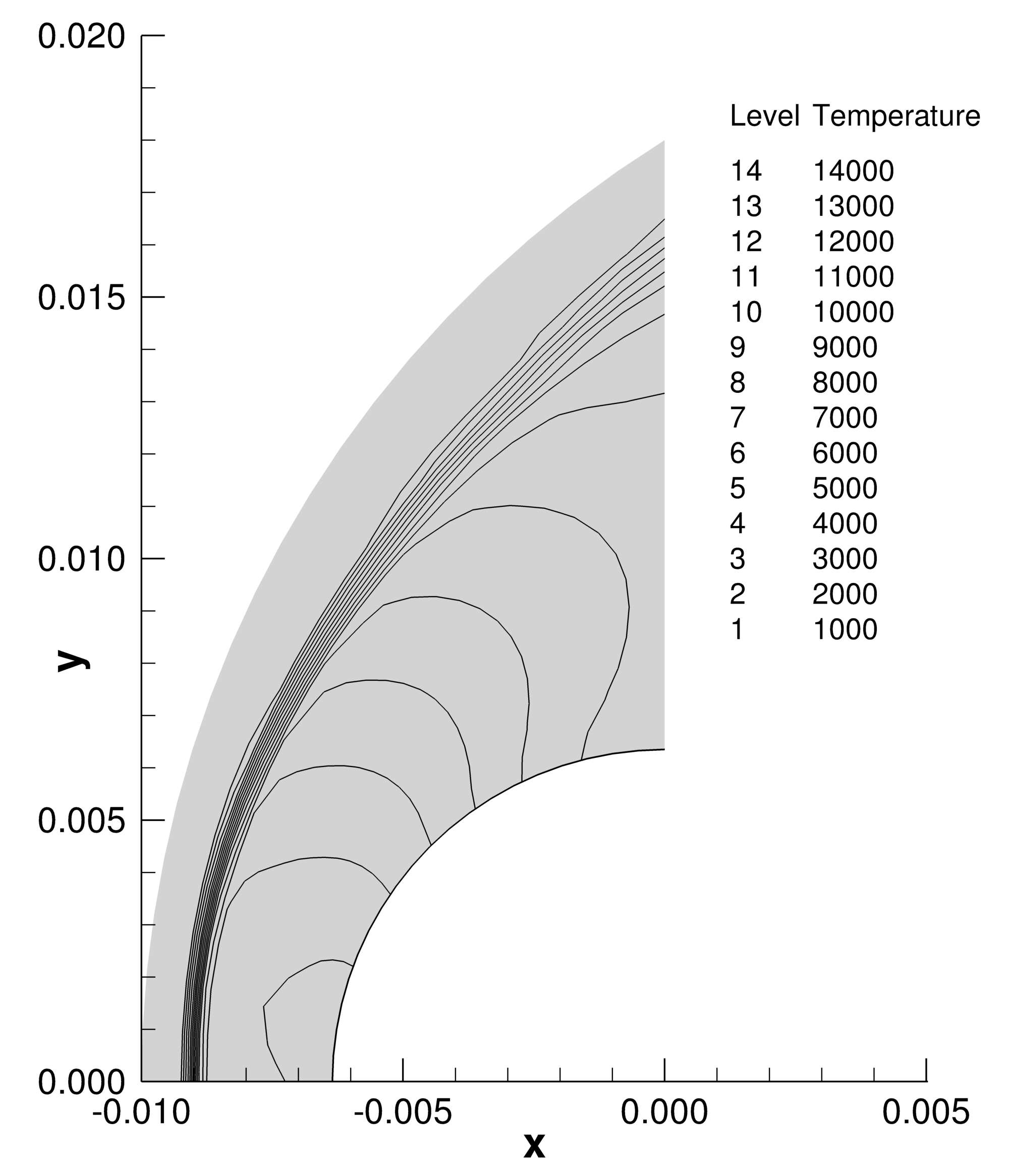,width=3.1cm} \hspace{-0.16cm}}
\subfloat{\epsfig{figure=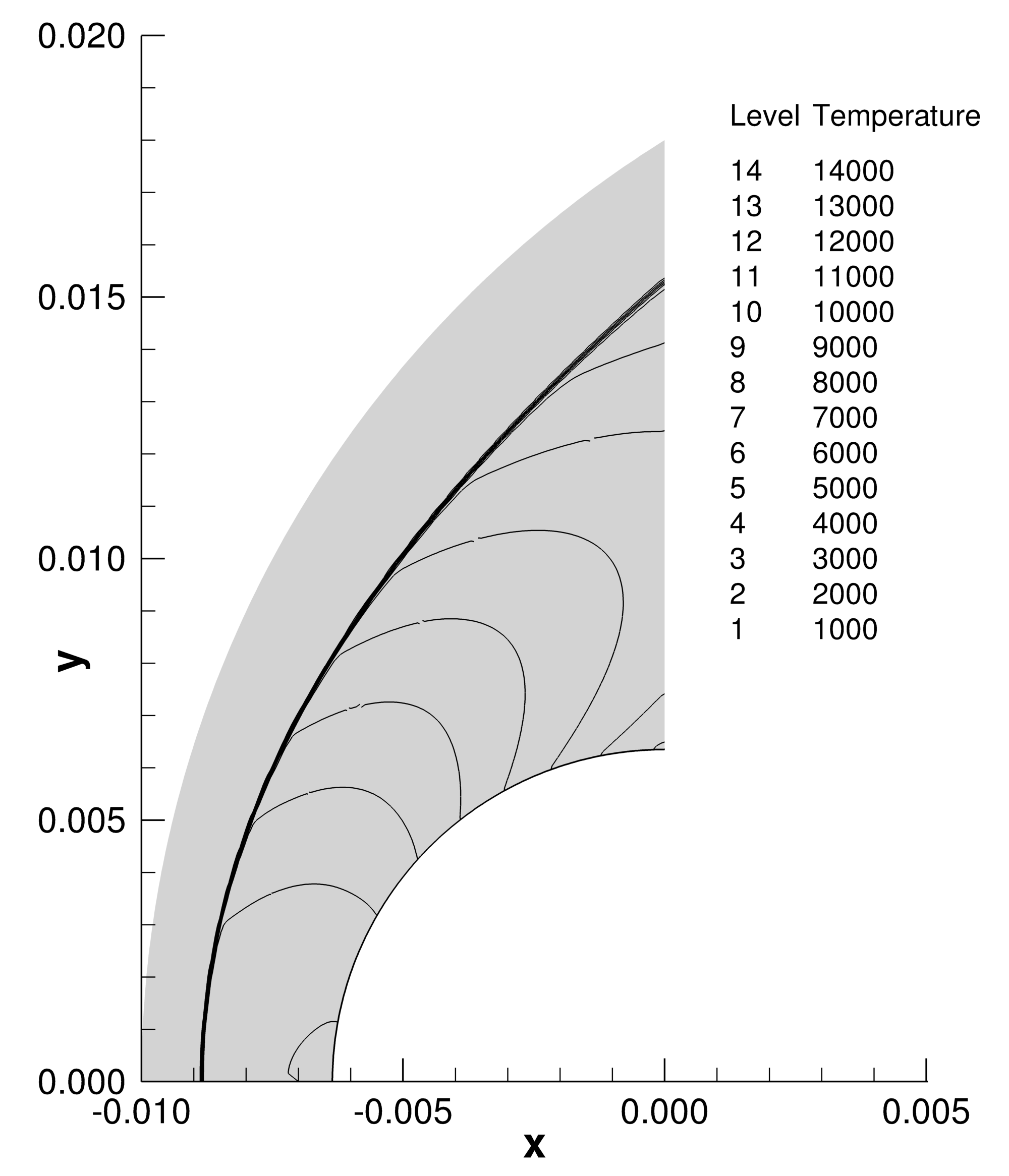,width=3.1cm}}\\ 
\setcounter{subfigure}{0}
%\subfloat[$M$, $20\times20$]{\epsfig{figure=,width=3.2cm} \hspace{-0.25cm}}
%\subfloat[$M$, $160\times160$]{\epsfig{figure=,width=3.2cm} \hspace{-0.25cm}}
%\subfloat[$T$, $20\times20$]{\epsfig{figure=,width=3.2cm} \hspace{-0.25cm}}
%\subfloat[$T$, $160\times160$]{\epsfig{figure=,width=3.2cm}}\\ 
\subfloat{\epsfig{figure=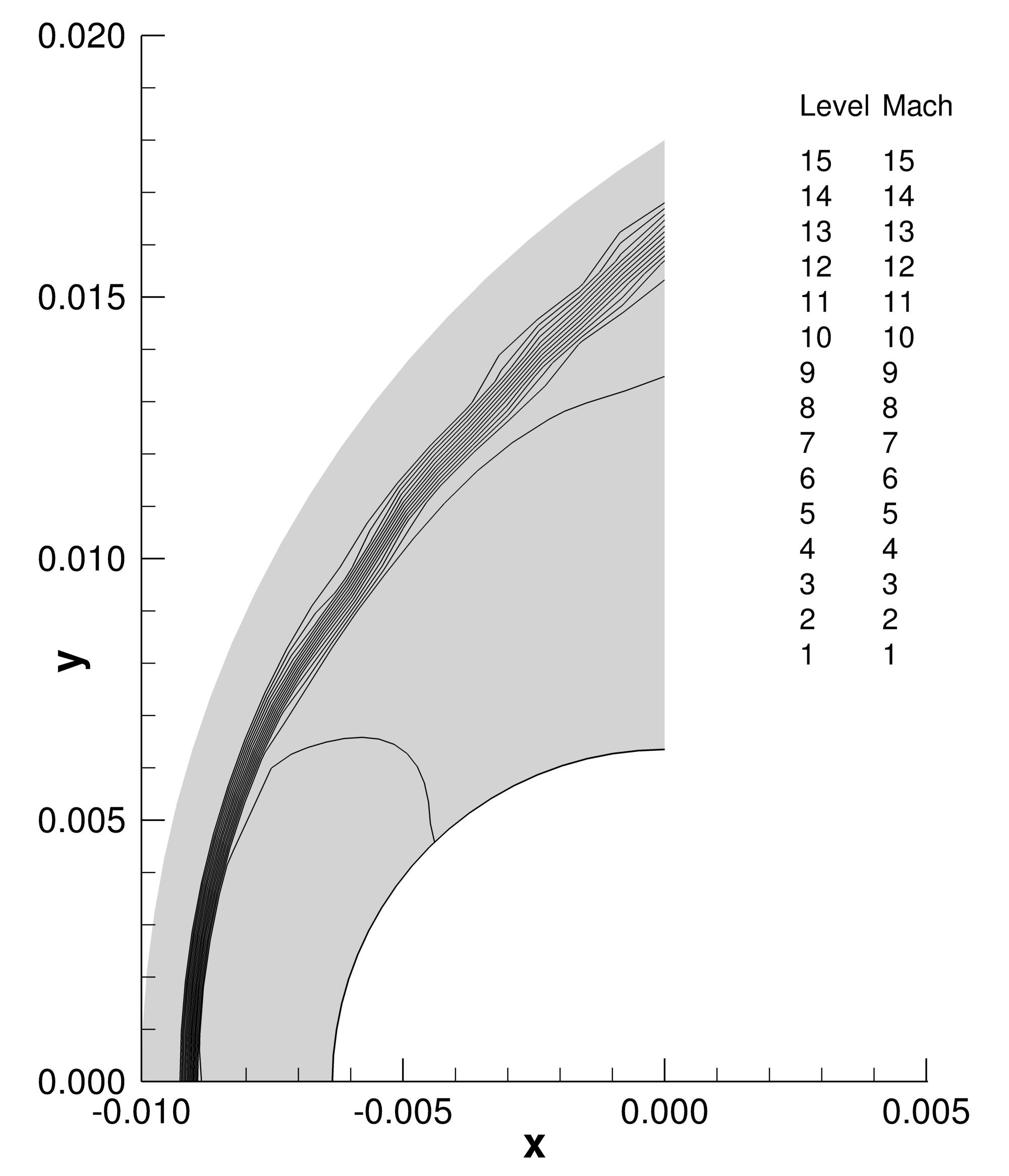,width=3.1cm} \hspace{-0.16cm}}
\subfloat{\epsfig{figure=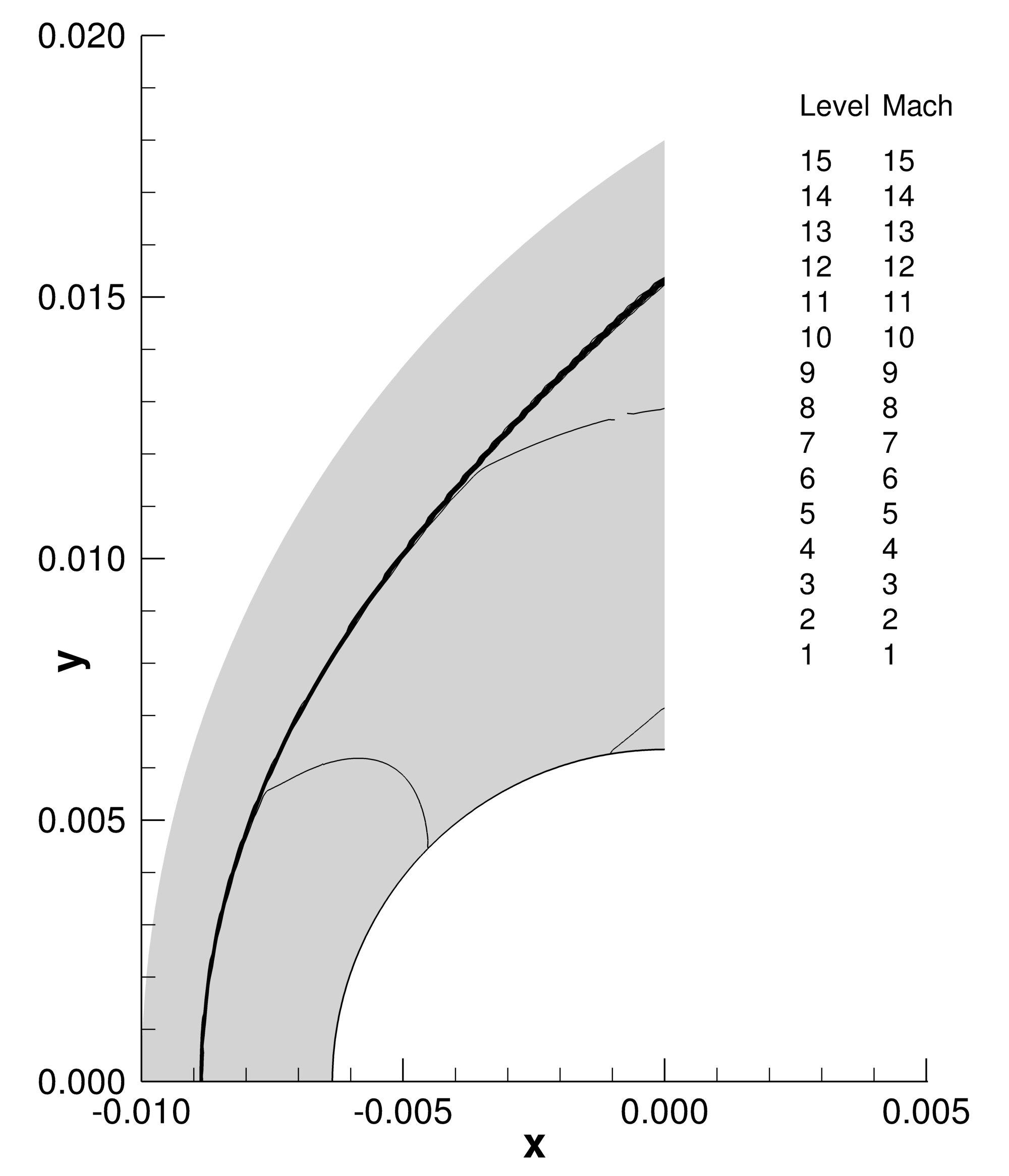,width=3.1cm} \hspace{-0.16cm}}
\subfloat{\epsfig{figure=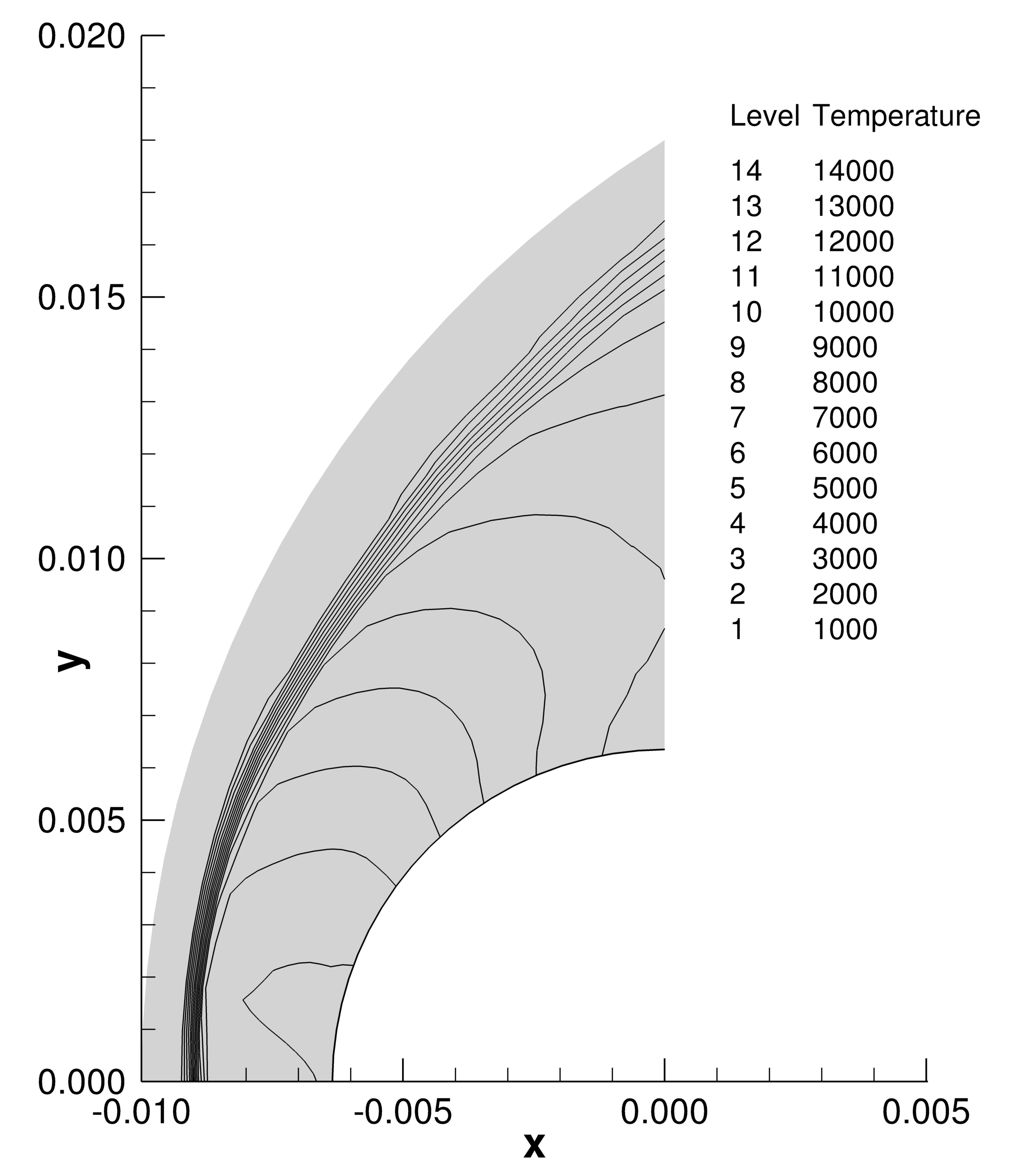,width=3.1cm} \hspace{-0.16cm}}
\subfloat{\epsfig{figure=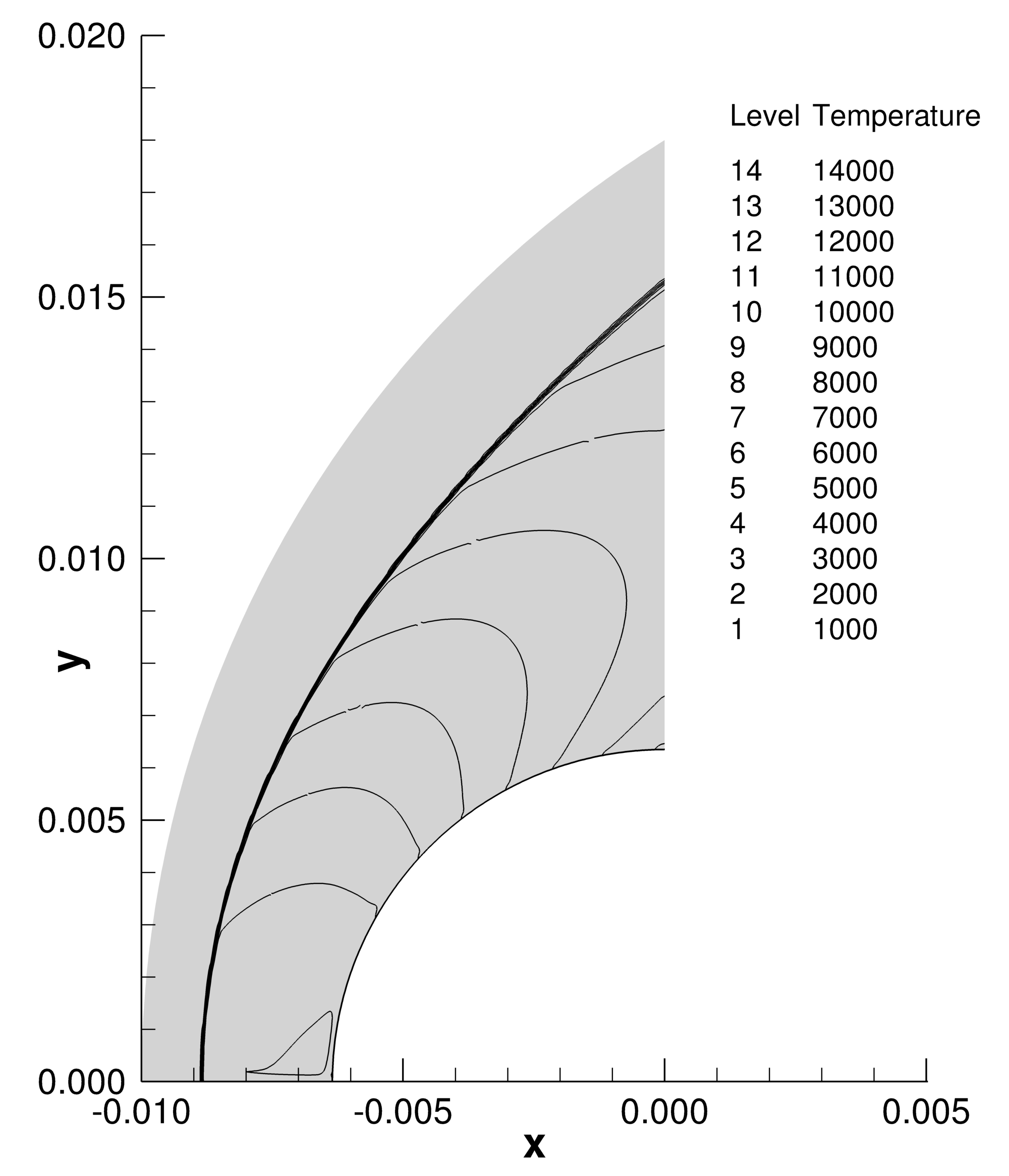,width=3.1cm}}\\ 
\caption{Hypersonic flow over Lobb's sphere: Mach number $M$ and temperature $\T$ contours obtained with the numerical fluxes \cref{eq:bouchut-flux} (top row), \cref{eq:hll_flux} (middle row), \cref{eq:godunov_flux} (bottom row), on two grids with $N=20\times20$ and $160\times160$ elements.}
\label{fig:solution_lobb_M15}
\end{center}
\end{figure}

\begin{figure}
\begin{center}
\setcounter{subfigure}{0}
\subfloat{\epsfig{figure=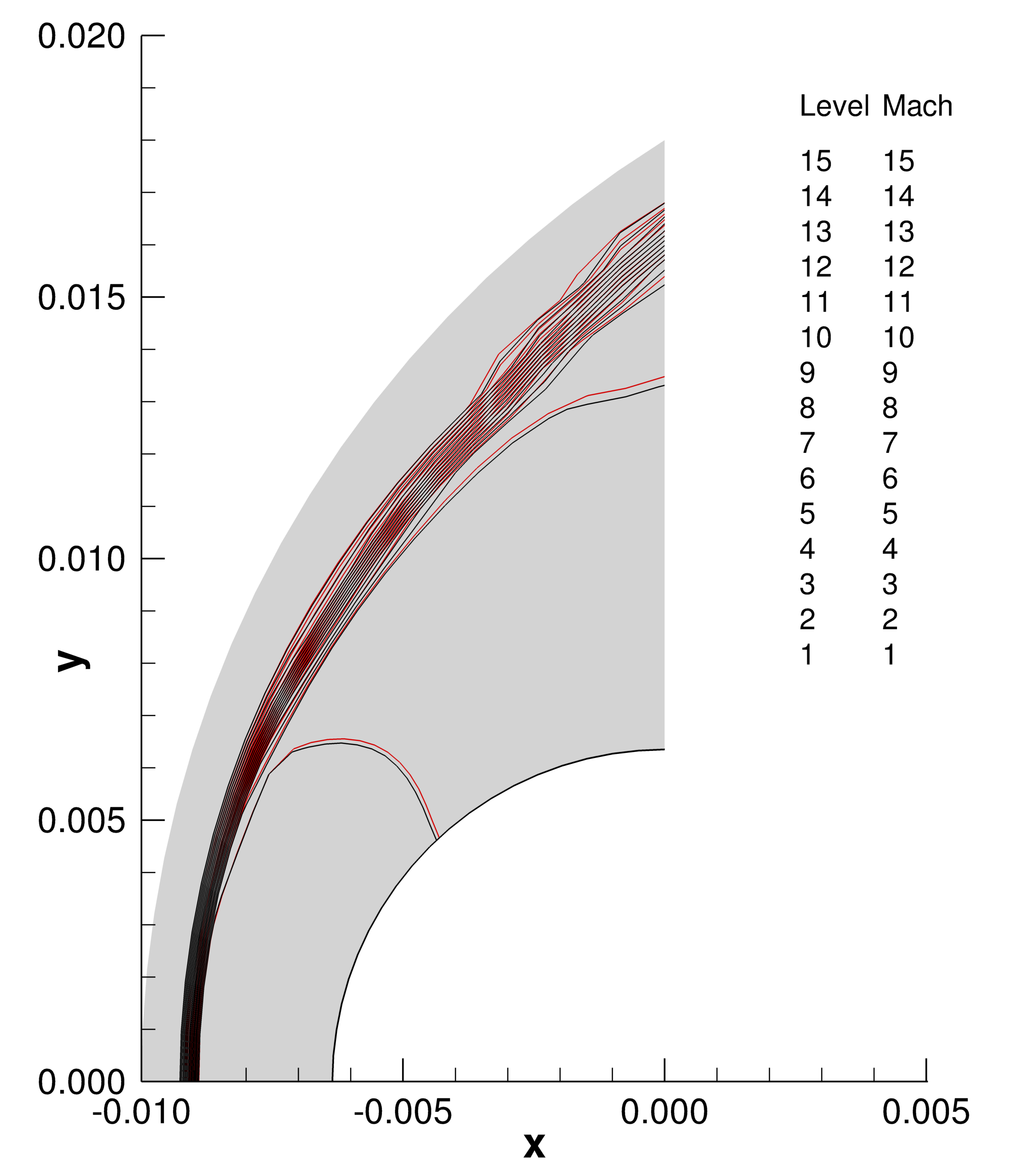,width=3.1cm} \hspace{-0.16cm}}
\subfloat{\epsfig{figure=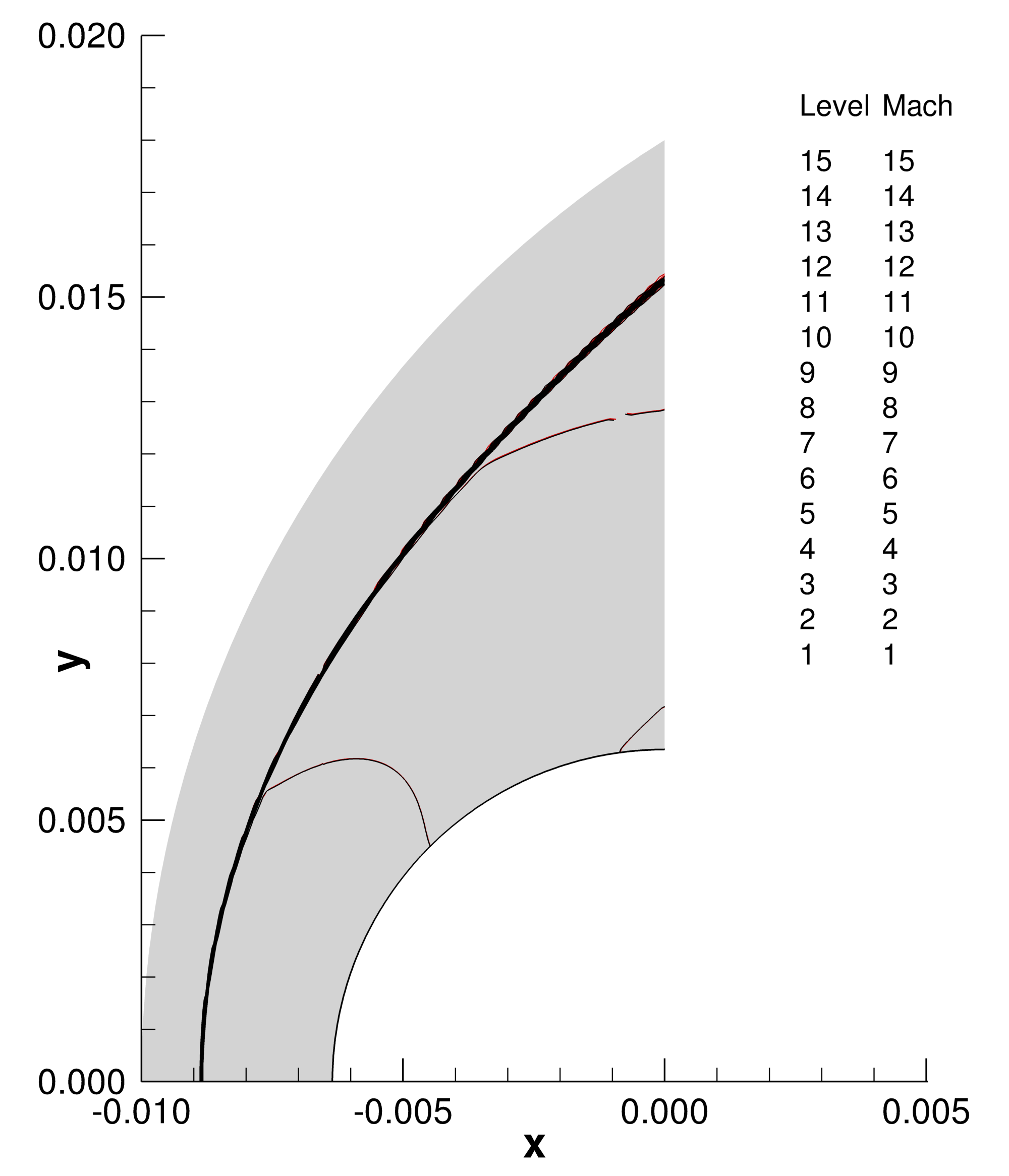,width=3.1cm} \hspace{-0.16cm}}
\subfloat{\epsfig{figure=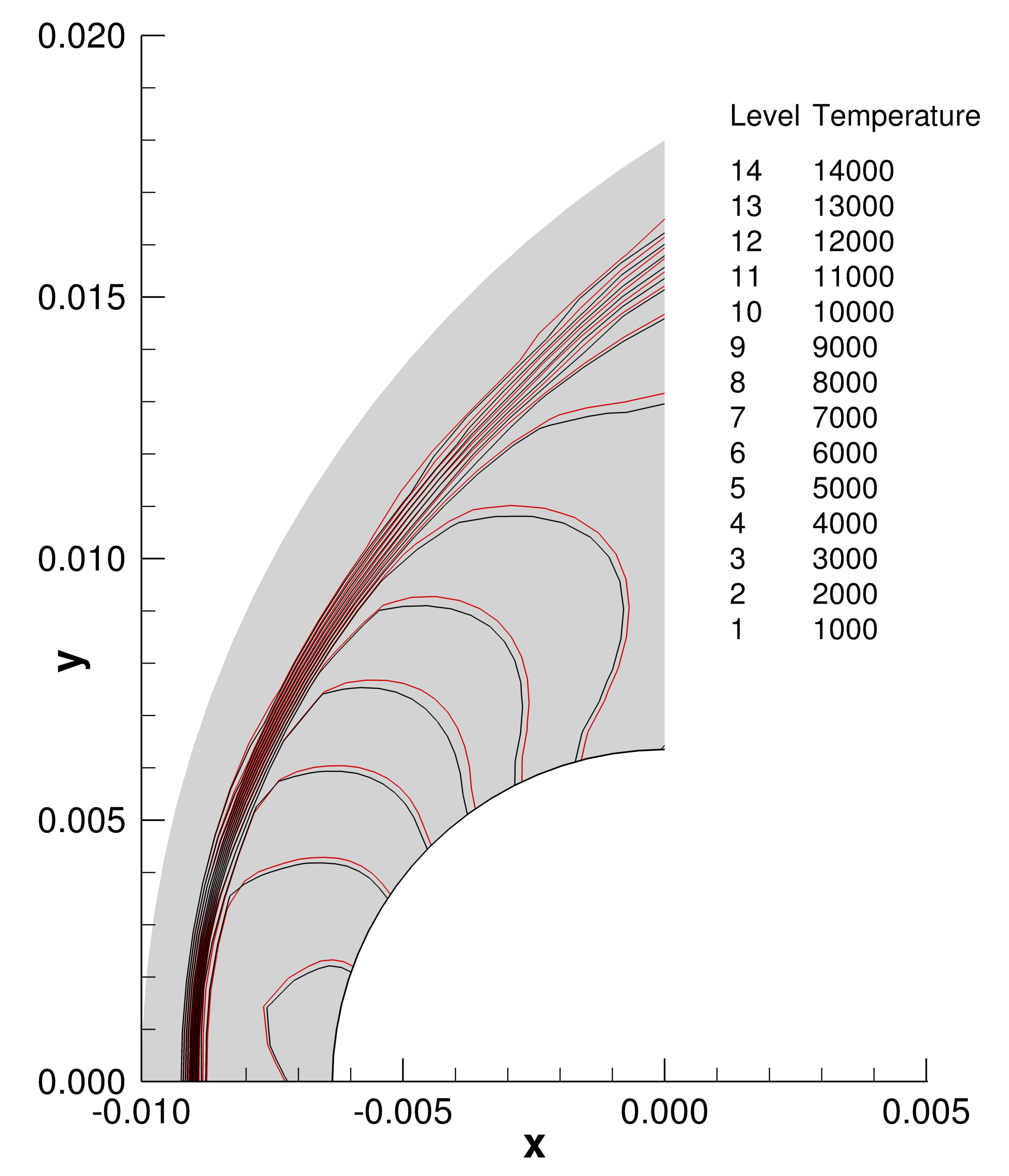,width=3.1cm} \hspace{-0.16cm}}
\subfloat{\epsfig{figure=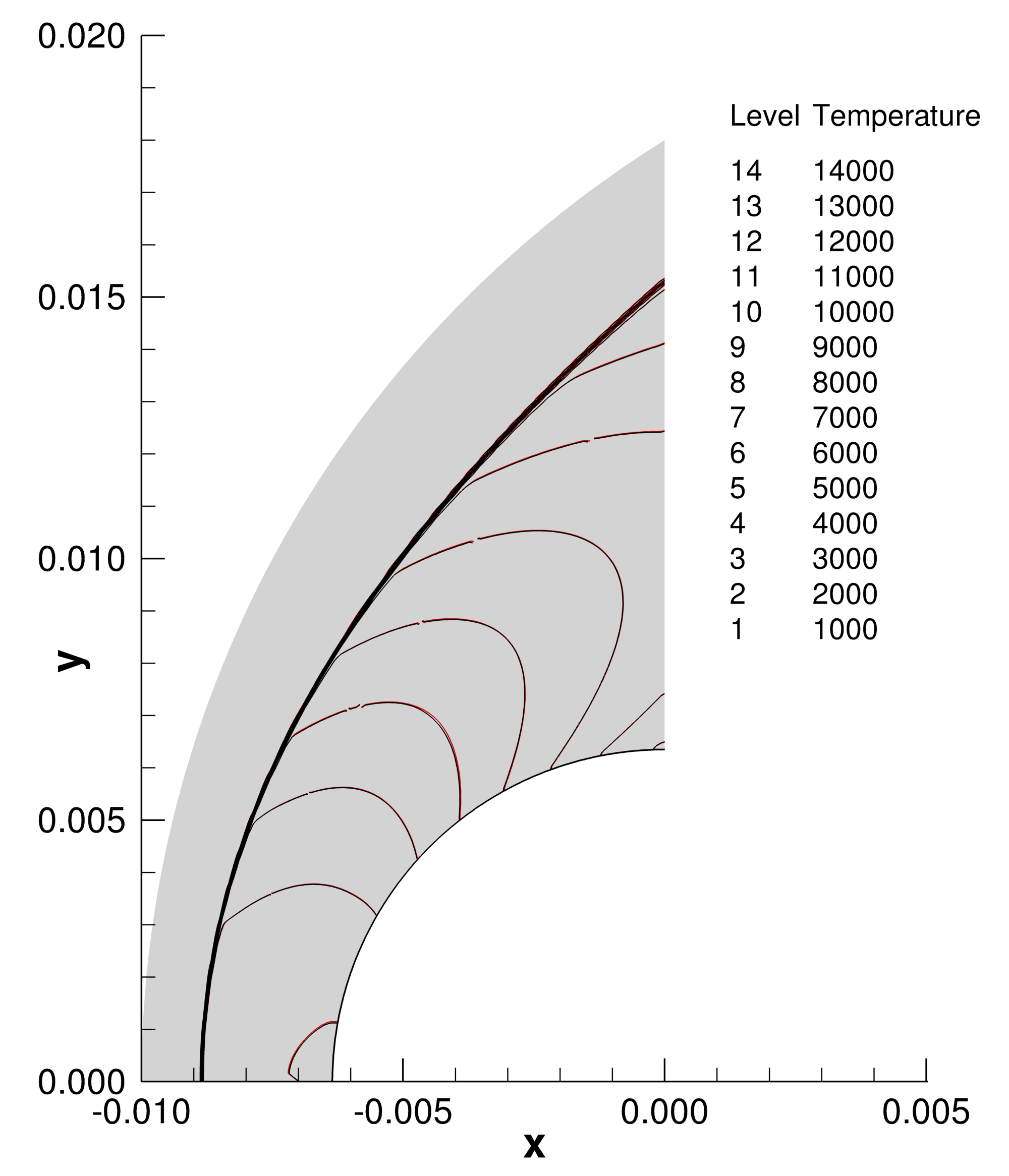,width=3.1cm}}\\ 
\caption{Hypersonic flow over Lobb's sphere: Mach number $M$ and temperature $\T$ contours obtained with the HLL numerical flux \cref{eq:hll_flux} (red), and the HLL-PG flux considering an equivalent monocomponent perfect gas (black) on two grids with $N=20\times20$ and $160\times160$ elements.}
\label{fig:solution_lobb_hll_hllms}
\end{center}
\end{figure}

\begin{figure}
\begin{center}
\subfloat{\epsfig{figure=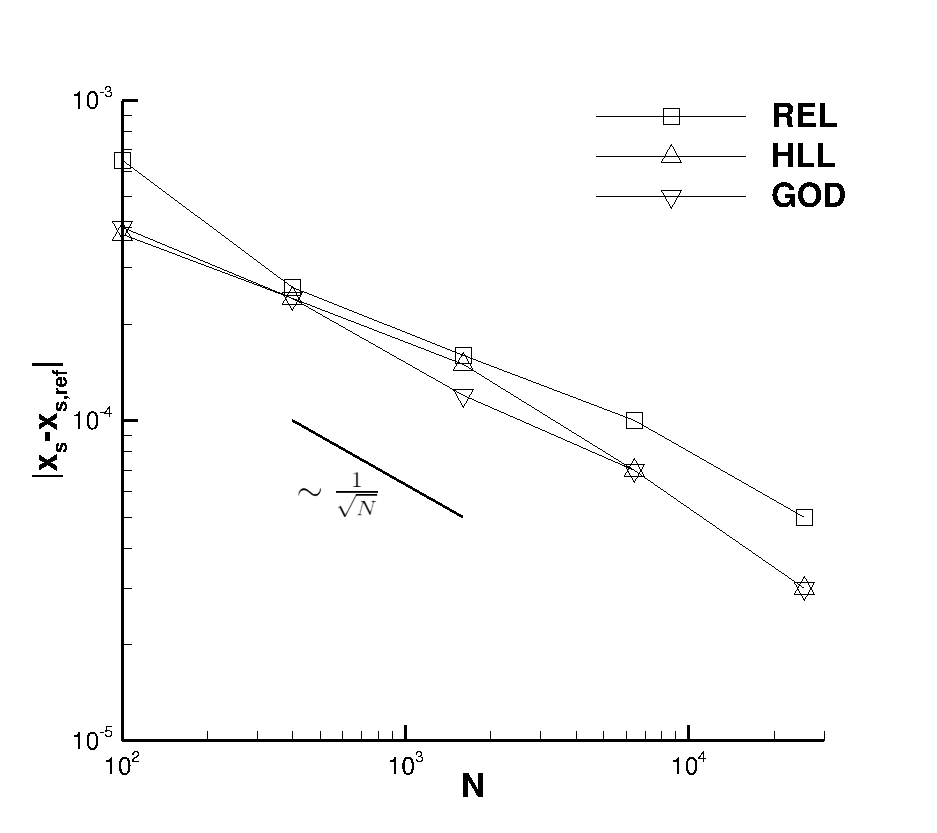,width=5cm}}
\caption{Hypersonic flow over Lobb's sphere: convergence of the shock position in the symmetry plane $y=0$ under mesh refinement.}
\label{fig:shock_position_lobb_M15}
\end{center}
\end{figure}

\subsection{Hypersonic flow over a double cone}

We finally consider the 2D hypersonic flow over a double cone with angles $25$  and $55$ deg. adapted from \cite{Druguet_et_al_05,KNIGHT20128} and made of molecular and atomic nitrogen with mass fractions $Y_{N_2}=0.99$, $Y_{N}=0.01$. The freestream Mach number is $M_\infty=11.3$ with $\rho_\infty=1.34\times10^{-3}$kg/m$^3$ and $\T_\infty=303$K. The freestream vibration temperature of the molecular nitrogen is taken at $\T_{N2_\infty}=3085$K. We use a series of five unstructured grids (see \cref{fig:2D_meshes}). A symmetry condition is imposed at the bottom boundary.

\begin{figure}
\begin{center}
%\subfloat{\epsfig{figure=,width=4.0cm}} 
%\subfloat{\epsfig{figure=,width=4.0cm}} 
%\subfloat{\epsfig{figure=,width=4.0cm}} \\
%
%\setcounter{subfigure}{0}
%\subfloat[REL]{\epsfig{figure=,width=4.0cm}}
%\subfloat[HLL]{\epsfig{figure=,width=4.0cm}}
%\subfloat[GOD]{\epsfig{figure=,width=4.0cm}}
\subfloat{\epsfig{figure=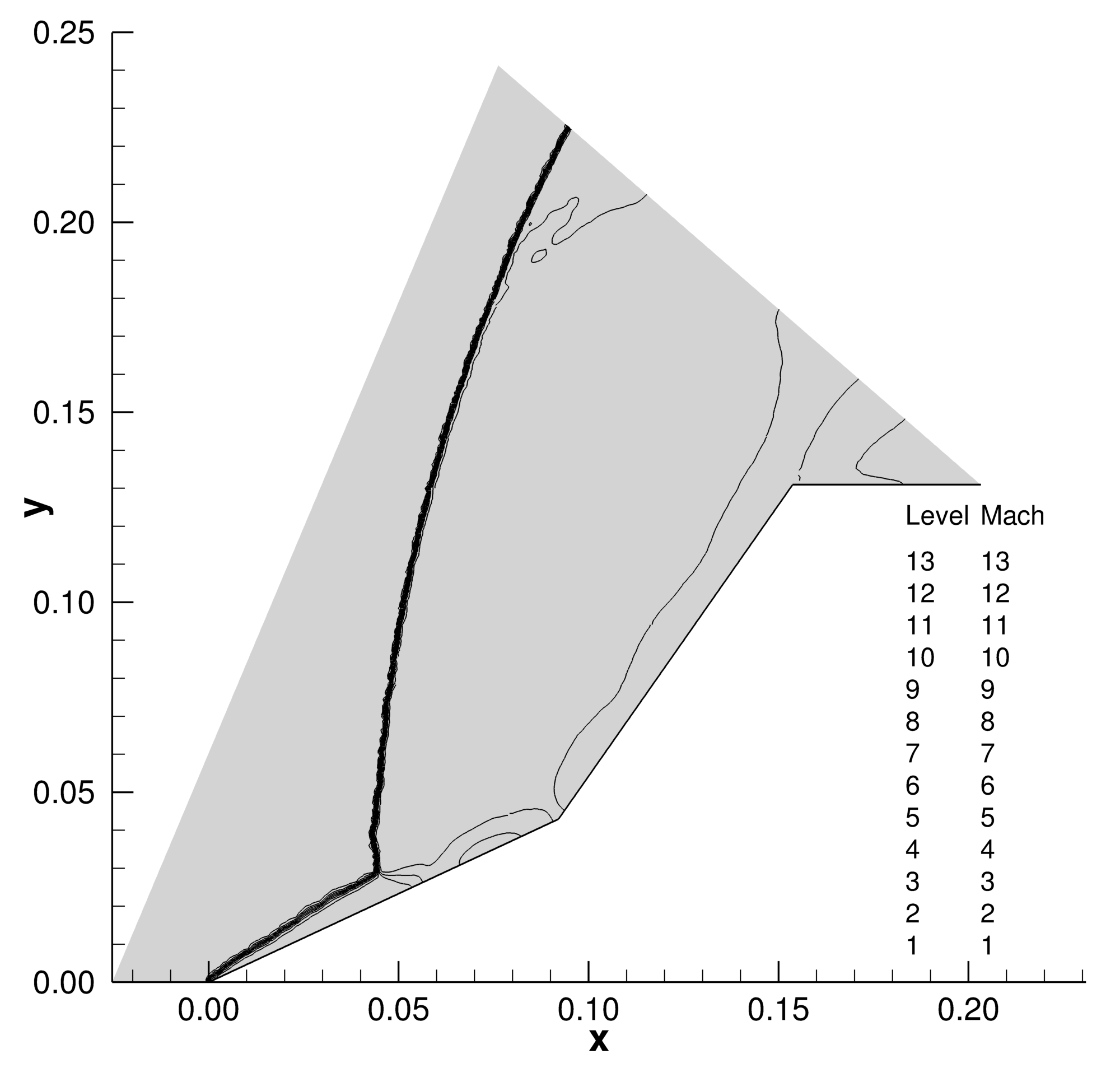,width=4.0cm}} 
\subfloat{\epsfig{figure=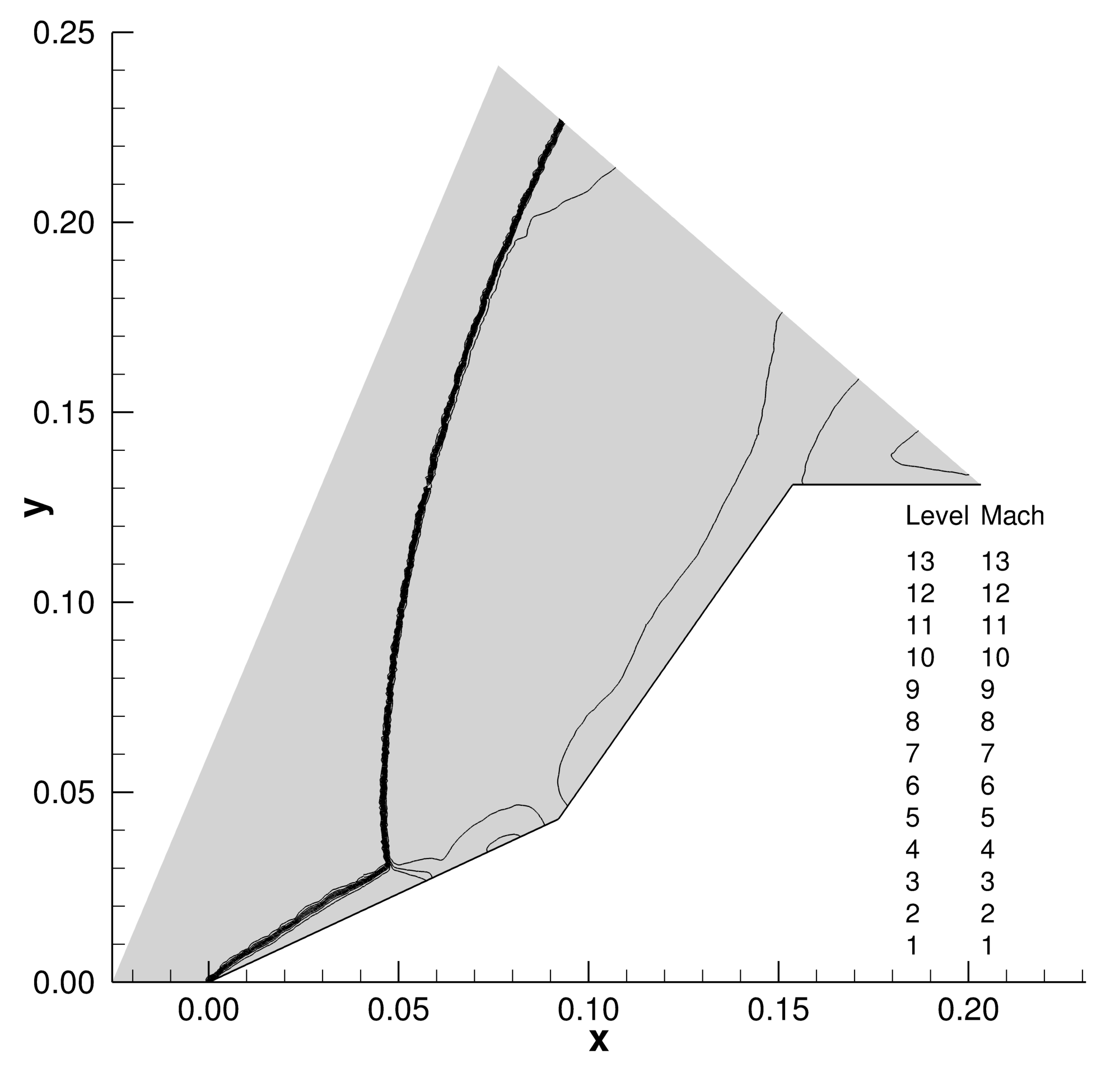,width=4.0cm}} 
\subfloat{\epsfig{figure=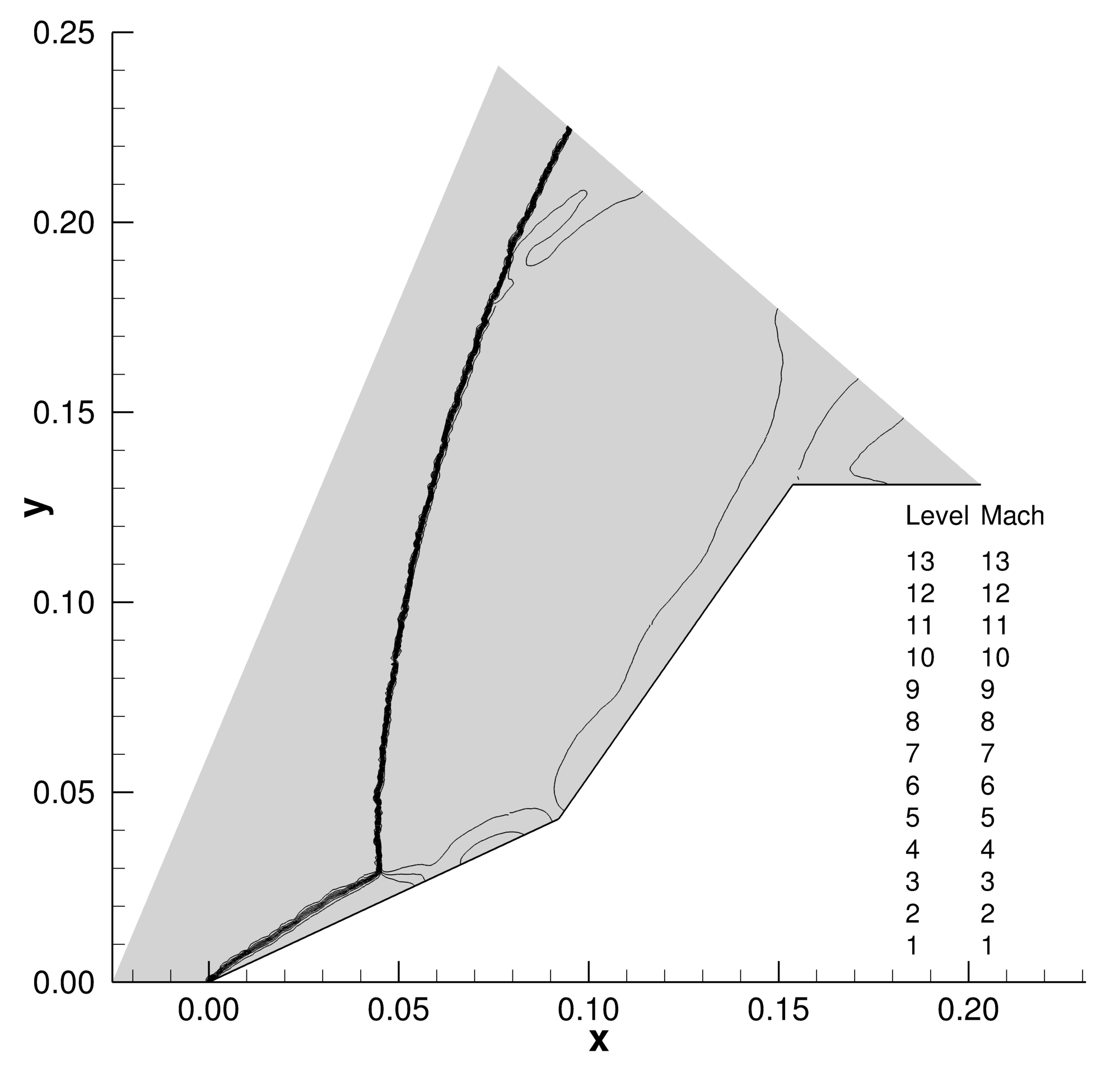,width=4.0cm}} \\
\setcounter{subfigure}{0}
\subfloat[REL]{\epsfig{figure=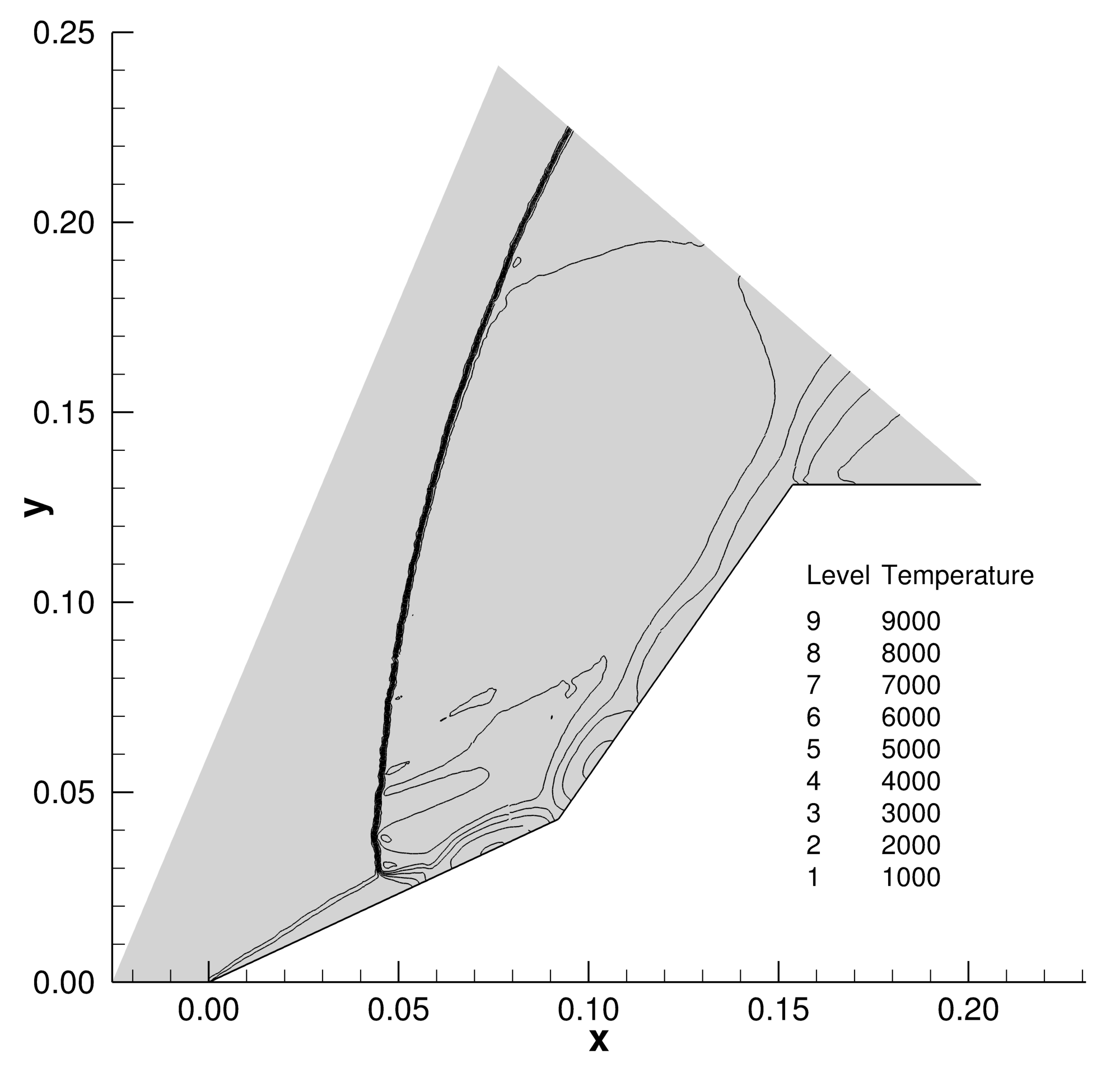,width=4.0cm}}
\subfloat[HLL]{\epsfig{figure=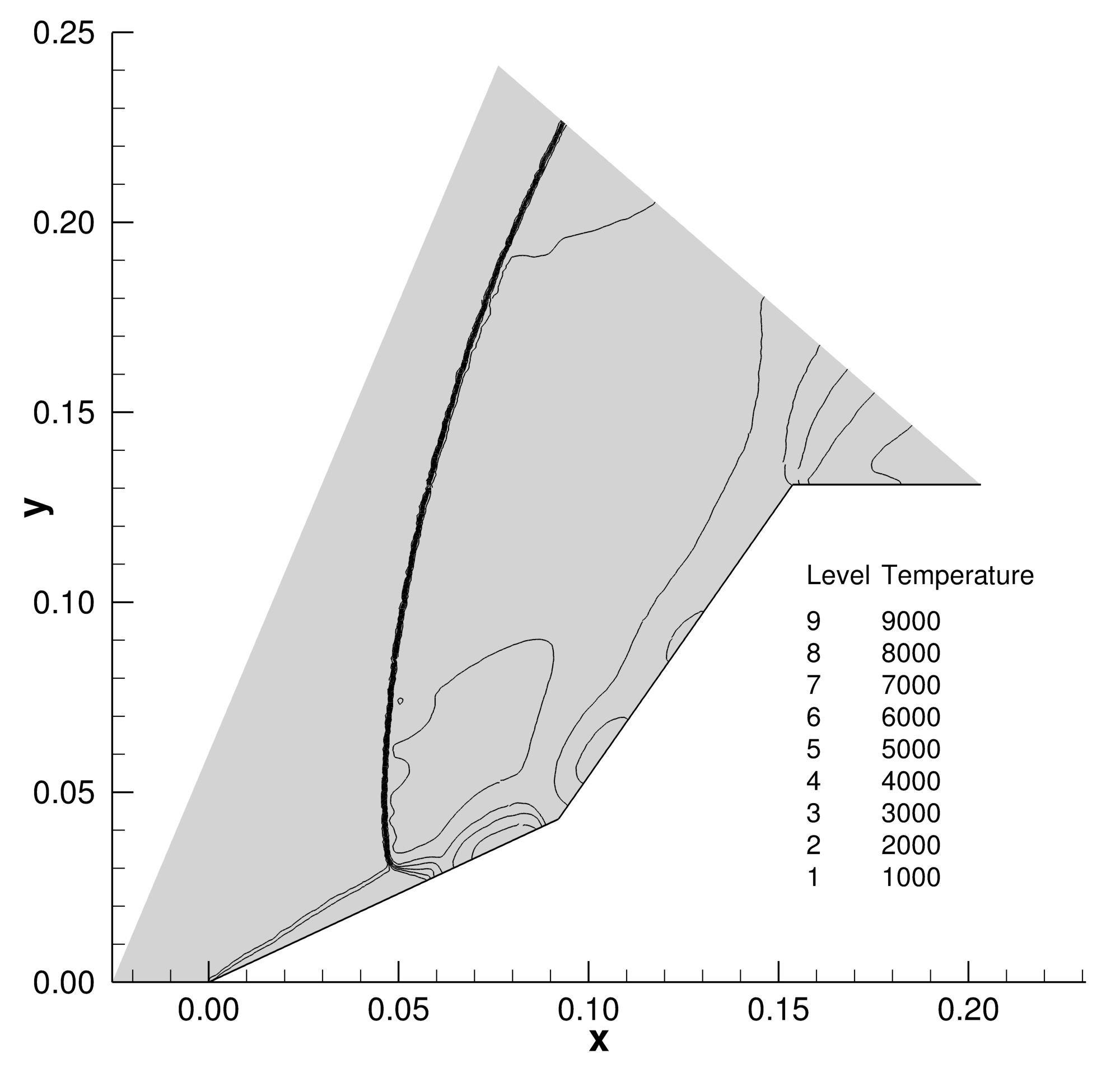,width=4.0cm}}
\subfloat[GOD]{\epsfig{figure=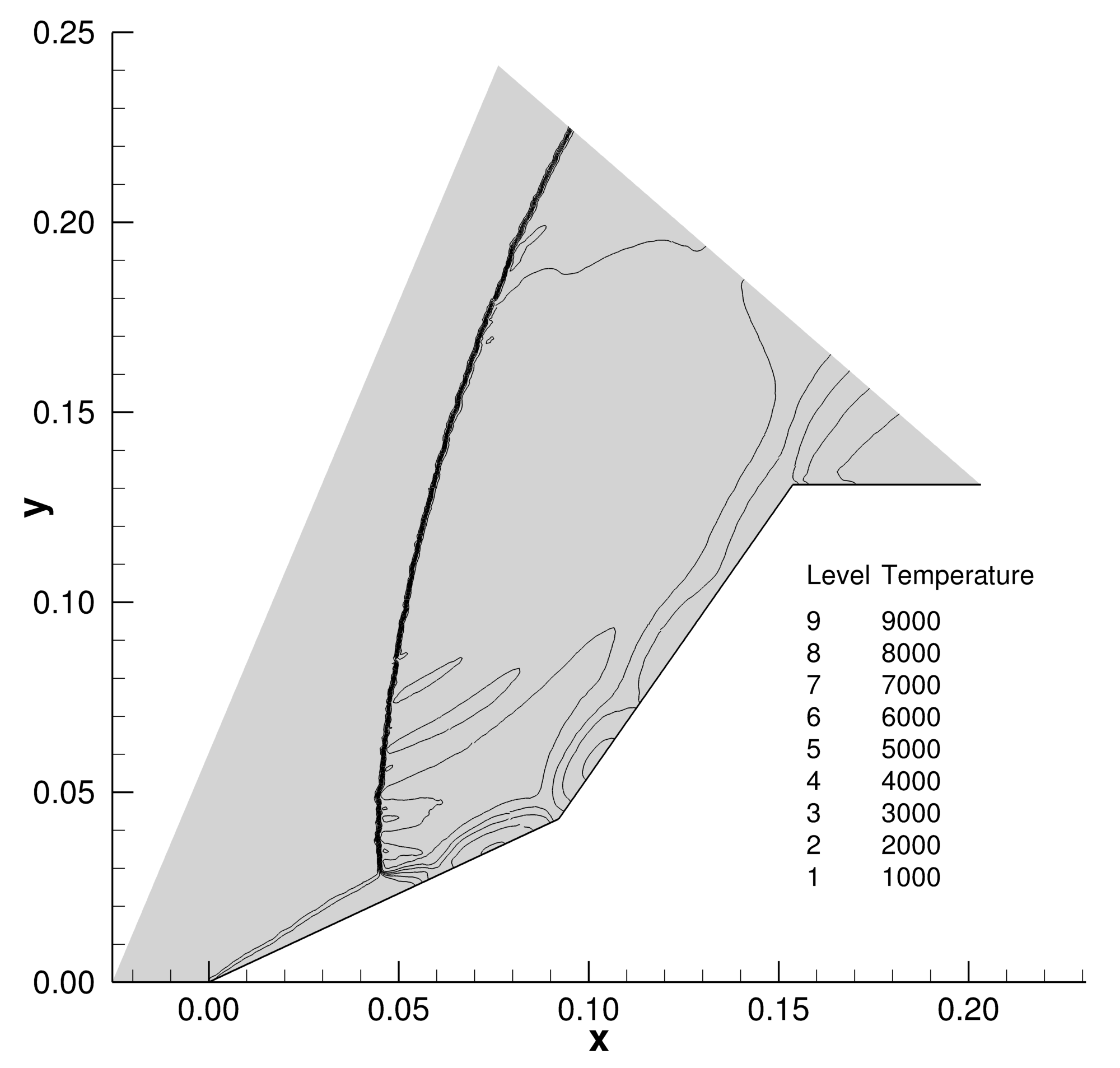,width=4.0cm}}
\caption{Hypersonic flow over a double cone: Mach number $M$ (top row) and temperature $\T$ (bottom row) contours obtained with the numerical fluxes \cref{eq:bouchut-flux} (REL), \cref{eq:hll_flux} (HLL), and \cref{eq:godunov_flux} (GOD) on a fine grid with $N=52,237$ elements.}
\label{fig:solution_dble_cone}
\end{center}
\end{figure}

\begin{figure}
\begin{center}
\setcounter{subfigure}{0}
\subfloat{\epsfig{figure=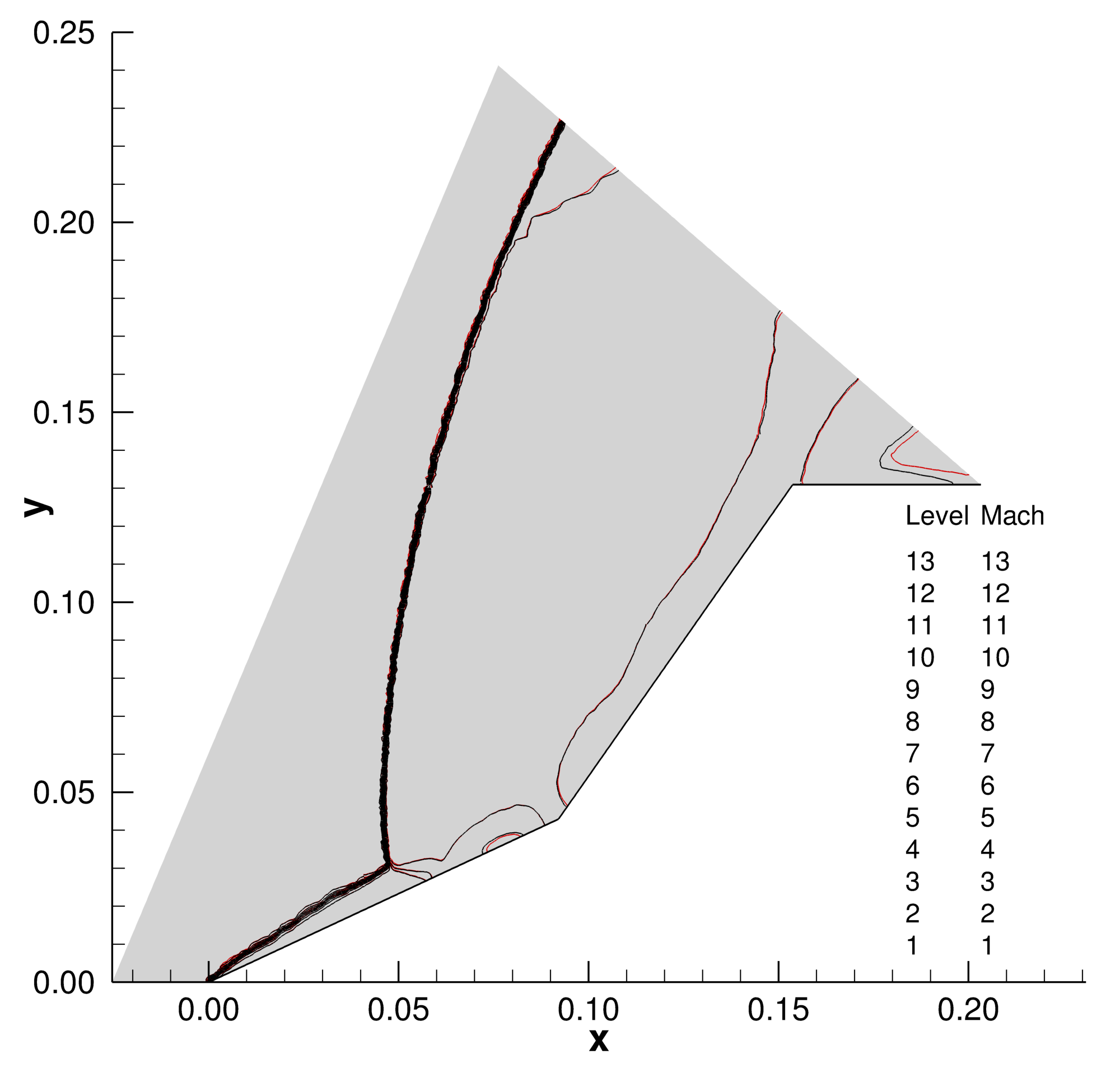,width=4.0cm}}
\subfloat{\epsfig{figure=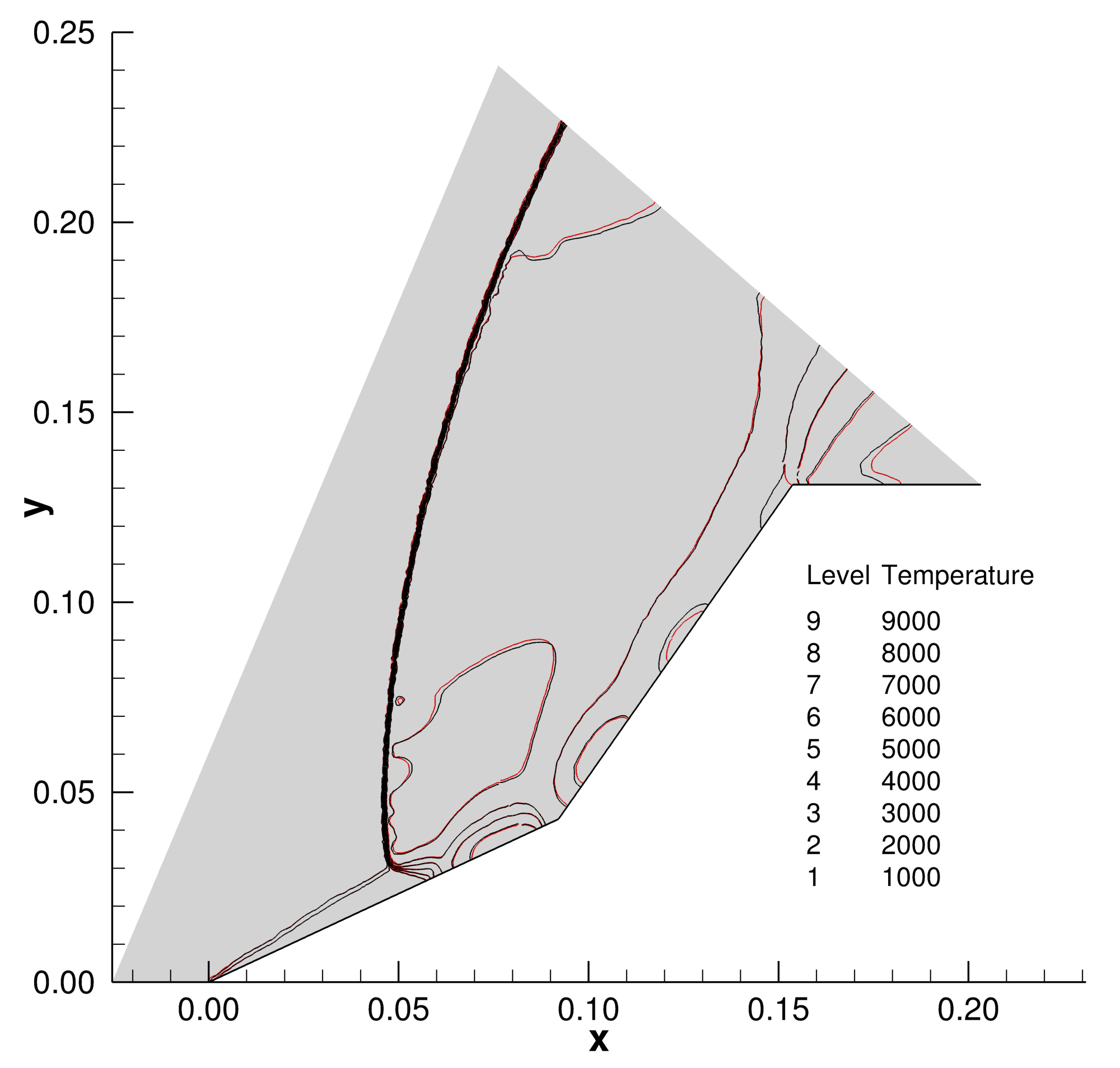,width=4.0cm}}
\caption{Hypersonic flow over a double cone: Mach number $M$ (left) and temperature $\T$ (right) contours obtained with the numerical flux \cref{eq:hll_flux} (red), and the HLL-PG flux considering an equivalent perfect gas (black) on a fine grid with $N=52,237$ elements.}
\label{fig:solution_dble_cone_hll_hllms}
\end{center}
\end{figure}

Contours of Mach number and translation-rotation temperature obtained with the three different schemes on the second finest mesh are displayed in \cref{fig:solution_dble_cone}. Compared to references \cite{Druguet_et_al_05,KNIGHT20128}, we observe a strong overestimation of the distance of the bow shock to the wall due to the absence of chemical reactions. However, the results with the three schemes are in good agreement. As done for the previous configuration, we compare in \cref{fig:solution_dble_cone_hll_hllms} the obtained solution to that corresponding to the use of an equivalent perfect gas with adiabatic exponent $\gamma = 1.4032$ (HLL-PG) on a fine grid. Once again a very good agreement is obtained. Finally in \cref{fig:press_dble_cone}, we display the pressure distribution at the wall obtained with the schemes on the five grids. The first pressure peak corresponds to the reflexion of the separated shock at the wall, while the second peak corresponds to rapid pressure variations due to the geometrical transition between the cones. We observe convergence of the solution as the mesh is refined and a close agreement between results from the three schemes on the finest grids.

\begin{figure}
\begin{center}
\subfloat[REL]{\epsfig{figure=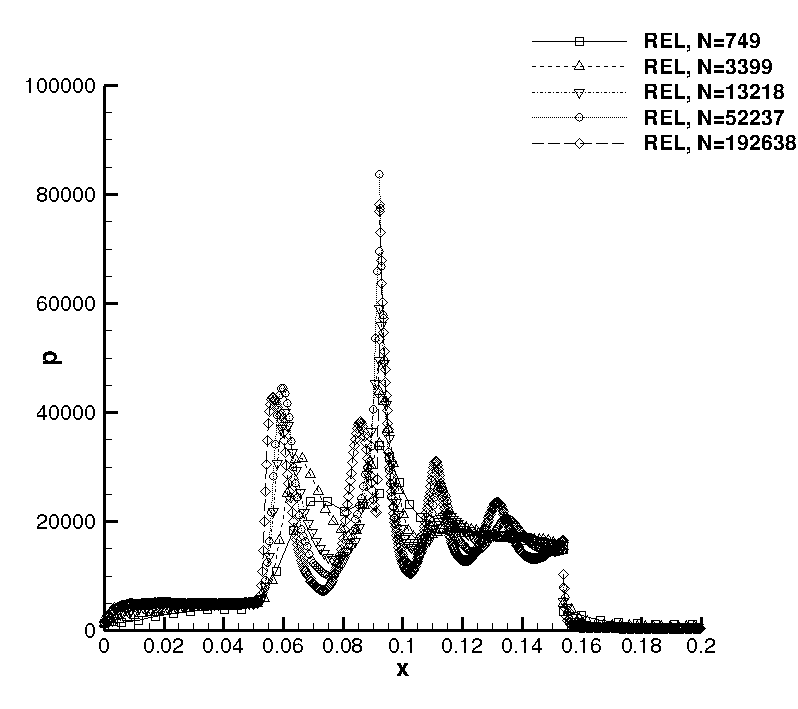,width=4.0cm}}
\subfloat[HLL]{\epsfig{figure=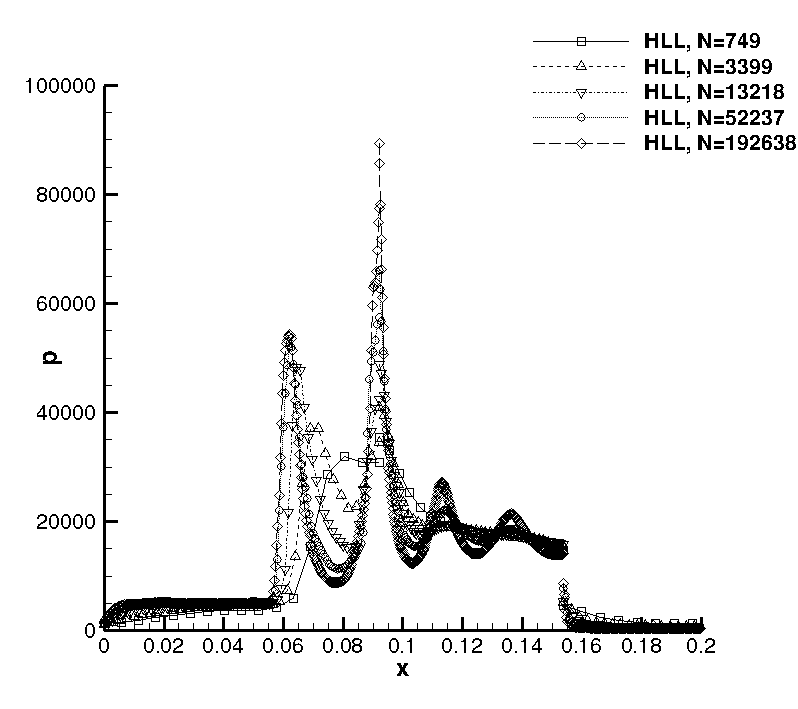,width=4.0cm}}
\subfloat[GOD]{\epsfig{figure=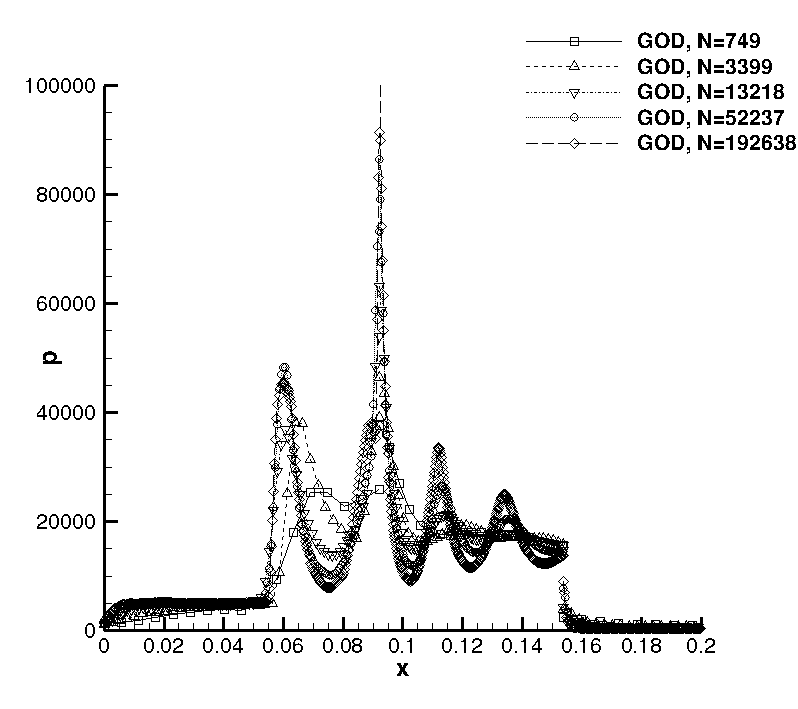,width=4.0cm}}
\caption{Hypersonic flow over a double cone: pressure distribution at the wall obtained with the numerical fluxes \cref{eq:bouchut-flux} (REL), \cref{eq:hll_flux} (HLL), and \cref{eq:godunov_flux} (GOD) on a series of five grids with $N$ elements.}
\label{fig:press_dble_cone}
\end{center}
\end{figure}

%
%%%
%
\section{Concluding remarks}\label{sec:conclusions}

We introduce a general framework to design finite volume schemes for the compressible multicomponent Euler equations in thermal nonequilibrium. The framework allows to define a numerical scheme for its discretization from a scheme for the discretization of the monocomponent polytropic gas dynamics through a simple linear formula. Moreover, the numerical scheme inherits the properties of the scheme for the polytropic gas dynamics under a subcharacteristic condition on the adiabatic exponent of the polytropic gas.

This framework relies on the extension of the relaxation of energy for the gas dynamics equations \cite{coquel_perthame_98} to the model under consideration in this work. Three different numerical fluxes are constructed with this framework the polytropic Godunov exact Riemann solver \cite{godunov_59}, HLL numerical flux \cite{hll_83}, and pressure-based relaxation solver \cite{bouchut_04}. They are assessed through numerical simulations of flows in one and two space dimensions with discontinuous solutions and complex wave interactions. The results highlight robustness, nonlinear stability, convergence of the present method, as well as similar performances of the three schemes.

Other numerical fluxes may be deduced from this framework. We also stress that the numerical fluxes designed in this framework can be used as building blocks in the general framework of conservative elementwise flux differencing schemes \cite{fisher_carpenter_13} and future work will consider the use of discontinuous Galerkin schemes for the discretization of the compressible multicomponent Euler equations in thermal nonequilibrium.

%
%%%
%
\appendix

\section{Convexity of the entropy for the energy relaxation system}\label{sec:appendix_relax_entropy_convexity}

The object of this appendix is the proof of \cref{th:entropy-convexity}. Without loss of generality we define $n_s$ in the mapping \cref{eq:mapping_Y_Yb} as the one corresponding to one species that satisfies $r_{n_s}=\min_\alpha r_\alpha$. To prove that $\rho\zeta({\bf w})$ is convex it is sufficient to prove that $\zeta(\Yb,\tau,e_r,e_s,{\bf e}_v)$ is convex from \cref{th:equiv_convexity} and, from \cref{eq:zeta-delta-zeta}, we rewrite $\zeta$ as
\begin{align*}
  \zeta(\Yb,\tau,e_r,e_s,{\bf e}_v) &= C_{v_t}(\Yb)\ln\big((\gamma-\gamma(\Yb))C_{v_t}(\Yb)\big) + \sum_{\alpha=1}^{n_s}r_\alpha Y_\alpha\ln Y_\alpha + \tfrac{r(\Yb)}{\gamma-1}\ln\tfrac{\gamma(\Yb)-1}{\gamma-\gamma(\Yb)} \\
	&-r(\Yb)\ln\tau - \tfrac{r(\Yb)}{\gamma-1}\ln e_r - \tfrac{(\gamma-\gamma(\Yb))C_{v_t}(\Yb)}{\gamma-1}\ln e_s + l(\Yb) - \s_v({\bf Y},{\bf e}_v),
\end{align*}
\noindent with $l(\Yb)=-\sum_{\alpha}Y_\alpha C_{v_\alpha}^t\ln C_{v_\alpha}^t - C_{v_t}(\Yb)\ln(\gamma-1)$, with $Y_{n_s}=1-\sum_{\alpha<n_s}Y_\alpha$, linear in $\Yb$. %Thus $\partial_{e_r}\zeta=-\tfrac{r(\Yb)}{(\gamma-1)e_r}<0$.

Introducing the short notations $\partial_kr\equiv\partial_{Y_k}r(\Yb)$, $\partial_kC_{v_t}\equiv\partial_{Y_k}C_{v_t}(\Yb)$, and $\partial_k\gamma\equiv\partial_{Y_k}\gamma(\Yb)$, the Hessian $\boldsymbol{\cal H}_\zeta(\Yb,\tau,e_r,e_s,{\bf e}_v)$ of $\zeta$ reads
\begin{equation}\label{eq:hessian_zeta}
%\makebox[\textwidth][c]{
  \begin{pmatrix} \big(\partial_{kl}^2\zeta-\delta_{k,l}\psi_k\tfrac{e_k^{v^2}\s_k^{v''}\!\!(e_k^v)}{Y_k}\big)_{kl} & \big(\tfrac{-\partial_kr}{\tau}\big)_{k} & \big(\tfrac{-\partial_kr}{(\gamma-1)e_r}\big)_{k} & \big(\tfrac{\partial_kr-(\gamma-1)\partial_kC_{v_t}}{(\gamma-1)e_s}\big)_{k} & \big(\tfrac{e_k^{v}\s_k^{v''}\!\!(e_k^v)}{Y_k}\big)_{1\leq k\leq n_d}\\
    \big(\tfrac{-\partial_lr}{\tau}\big)_{l} & \tfrac{r(\Yb)}{\tau^2} & 0 & 0 & 0\\
    \big(\tfrac{-\partial_lr}{(\gamma-1)e_r}\big)_{l} & 0 & \tfrac{r(\Yb)}{(\gamma-1)e_r^2} & 0 & 0 \\
    \big(\tfrac{\partial_lr-(\gamma-1)\partial_lC_{v_t}}{(\gamma-1)e_s}\big)_{l} & 0 & 0 & \tfrac{\gamma-\gamma(\Yb)}{\gamma-1}\tfrac{C_{v_t}(\Yb)}{e_s^2} & 0 \\
		\big(\tfrac{e_l^{v}\s_l^{v''}(e_l^v)}{Y_l}\big)_{1\leq l\leq n_d} & 0 & 0 & 0 & \big(-\delta_{k,l}\tfrac{\s_k^{v''}\!\!(e_k^v)}{Y_k}\big)_{k,l}
 \end{pmatrix}
%}
\end{equation}
\noindent with $\delta_{k,l}$ the Kronecker symbol and $\psi_k=1$ if $1\leq k\leq n_d$ and $\psi_k=0$ if $n_d<k< n_s$. Unless stated otherwise, the subscripts are in the range $1\leq k,l < n_s$, $k$ corresponding to a row index and $l$ corresponding to a column index. Likewise
\begin{equation}\label{eq:dr_dCv_dgam}
  \partial_kr \overset{\cref{eq:equiv-gamma}}{=} C_{v_t}(\Yb)\partial_k\gamma+(\gamma(\Yb)-1)\partial_kC_{v_t},
\end{equation}
\noindent and
\begin{equation}\label{eq:d2ZetadYY}
\partial_{kl}^2\zeta = \frac{r_{n_s}}{Y_{n_s}} + \frac{r_k}{Y_k}\delta_{k,l} + \frac{\partial_kC_{v_t}\partial_lC_{v_t}}{C_{v_t}(\Yb)} + \frac{C_{v_t}(\Yb)\partial_k\gamma\partial_l\gamma}{\big(\gamma-\gamma(\Yb)\big)\big(\gamma(\Yb)-1\big)}, \quad 1\leq k,l< n_s.
\end{equation}
%\noindent with $\delta_{k,l}$ the Kronecker symbol.

We now prove that $\boldsymbol{\cal H}_\zeta$ is symmetric positive definite. Let ${\bf x}=(x_{1\leq i<n_s},x_\tau,x_r,x_s,x_{1\leq i\leq n_d}^v)^\top$ in $\mathbb{R}^{n_s+n_d+3}$ non zero and use the notation $\sum\equiv\sum_{k=1}^{n_s-1}$, we get
\begin{align}
  \sum_{k,l=1}^{n_s-1} x_k\partial_{kl}^2\zeta x_l &\overset{\cref{eq:d2ZetadYY}}{=} \frac{r_{n_s}(\sum x_k)^2}{Y_{n_s}} + \sum \frac{r_kx_k^2}{Y_k} + \frac{(\sum x_k\partial_kC_{v_t})^2}{C_{v_t}(\Yb)} + \frac{C_{v_t}(\Yb)(\sum x_k\partial_k\gamma)^2}{\big(\gamma-\gamma(\Yb)\big)\big(\gamma(\Yb)-1\big)} \nonumber \\
  &\overset{\cref{eq:dr_dCv_dgam}}{=} \frac{r_{n_s}(\sum x_k)^2}{Y_{n_s}} + \sum \frac{r_kx_k^2}{Y_k} + \frac{(\sum x_k\partial_kC_{v_t})^2}{C_{v_t}(\Yb)} \nonumber\\
	&+ \frac{\Big(\sum x_k\big(\partial_kr-(\gamma(\Yb)-1)\partial_kC_{v_t}\big)\Big)^2}{\big(\gamma-\gamma(\Yb)\big)r(\Yb)} \nonumber \\
  &= \frac{r_{n_s}(\sum x_k)^2}{Y_{n_s}} + \sum \frac{r_kx_k^2}{Y_k} + \frac{(\sum x_k\partial_kC_{v_t})^2}{C_{v_t}(\Yb)} \nonumber\\
	&+ \frac{(\sum x_k\partial_kr)^2}{\big(\gamma-\gamma(\Yb)\big)r(\Yb)} + \frac{(\gamma(\Yb)-1)(\sum x_k\partial_kC_{v_t})^2}{\big(\gamma-\gamma(\Yb)\big)C_{v_t}(\Yb)} - \frac{2(\sum x_k\partial_kr)(\sum x_k\partial_kC_{v_t})}{\big(\gamma-\gamma(\Yb)\big)C_{v_t}(\Yb)} \nonumber \\
	&\overset{\cref{eq:dr_dCv_dgam}}{=} \frac{r_{n_s}(\sum x_k)^2}{Y_{n_s}} + \sum \frac{r_kx_k^2}{Y_k} + \frac{(\sum x_k\partial_kr)^2}{(\gamma-1)r(\Yb)} + \frac{(\sum x_k(\gamma-1)\partial_kC_{v_t}-x_k\partial_kr)^2}{\big(\gamma-\gamma(\Yb)\big)(\gamma-1)C_{v_t}(\Yb)}, \label{eq:SPD_d2ZetadYY}
\end{align}

\noindent so using \cref{eq:hessian_zeta,eq:SPD_d2ZetadYY} we obtain
\begin{align*}
  {\bf x}^\top\boldsymbol{\cal H}_\zeta{\bf x} &= \frac{r_{n_s}(\sum x_k)^2}{Y_{n_s}} + \sum \frac{r_kx_k^2}{Y_k} + \Big(1+\frac{1}{\gamma-1}-1\Big)\frac{(\sum x_k\partial_kr)^2}{r(\Yb)} - \sum_{k=1}^{n_d}\tfrac{\s_k^{v''}\!(e_k^v)}{Y_k}(x_k^v-e_k^vx_k)^2 \\
	&+ \frac{(\sum x_k(\gamma-1)\partial_kC_{v_t}-x_k\partial_kr)^2}{\big(\gamma-\gamma(\Yb)\big)(\gamma-1)C_{v_t}(\Yb)} + r(\Yb)\frac{x_\tau^2}{\tau^2} + \frac{r(\Yb)}{\gamma-1}\frac{x_r^2}{e_r^2} \\
  &+ \frac{\big(\gamma-\gamma(\Yb)\big)C_{v_t}(\Yb)}{\gamma-1}\frac{x_s^2}{e_s^2} -2\sum x_k\Big(\partial_kr\frac{x_\tau}{\tau} + \frac{\partial_kr}{\gamma-1}\frac{x_r}{e_r} + \frac{(\gamma-1)\partial_kC_{v_t}-\partial_kr}{\gamma-1}\frac{x_s}{e_s}\Big) \\
  &= Q({\bf x}) + \frac{\big(\sum x_k\partial_kr-r(\Yb)\tfrac{x_\tau}{\tau}\big)^2}{r(\Yb)} + \frac{\big(\sum x_k\partial_kr-r(\Yb)\tfrac{x_r}{e_r}\big)^2}{(\gamma-1)r(\Yb)} \\
	&+ \frac{\big(\sum x_k((\gamma-1)\partial_kC_{v_t}-\partial_kr)-(\gamma-\gamma(\Yb))C_{v_t}(\Yb)\tfrac{x_s}{e_s}\big)^2}{(\gamma-\gamma(\Yb))(\gamma-1)C_{v_t}(\Yb)} - \sum_{k=1}^{n_d}\tfrac{\s_k^{v''}\!(e_k^v)}{Y_k}(x_k^v-e_k^vx_k)^2,
\end{align*}

\noindent with $\s_k^{v''}\!(e_k^v)=-\tfrac{r_k}{e_k^v(e_k^v+r_k\vartheta_k^v)}<0$, so the four last terms are non-negative, and
\begin{equation*}
 Q({\bf x}) = \frac{r_{n_s}(\sum x_k)^2}{Y_{n_s}} + \sum \frac{r_kx_k^2}{Y_k} - \frac{(\sum x_k\partial_kr)^2}{r(\Yb)}.
\end{equation*}

Using (\ref{eq:def2_r}), we get $\partial_kr=r_k-r_{n_s}\geq0$ since by assumption $r_{n_s}=\min_\alpha r_\alpha$, so we rewrite
\begin{equation*}
 \sum \frac{r_kx_k^2}{Y_k} = \sum \frac{r_{n_s}+\partial_kr}{Y_k}x_k^2 = \sum \frac{r_{n_s}}{Y_k}x_k^2 + \sum \frac{r_{n_s}+\sum_lY_l\partial_lr}{r(\Yb)}\frac{\partial_kr}{Y_k}x_k^2,
\end{equation*}
\noindent and hence obtain
\begin{align*}
 Q({\bf x}) &= \frac{r_{n_s}(\sum x_k)^2}{Y_{n_s}} + \sum \frac{r_{n_s}}{Y_k}\Big(1+\frac{\partial_kr}{r(\Yb)}\Big)x_k^2 + \sum_{k,l=1}^{n_s-1} \frac{\partial_kr\partial_lr}{r(\Yb)}\Big(\frac{Y_l}{Y_k}x_k^2 - x_kx_l\Big) \\
 &= \frac{r_{n_s}(\sum x_k)^2}{Y_{n_s}} + \sum \frac{r_{n_s}}{Y_k}\Big(1+\frac{\partial_kr}{r(\Yb)}\Big)x_k^2 + \frac{1}{2}\sum_{k,l=1}^{n_s-1} \frac{\partial_kr\partial_lrY_kY_l}{r(\Yb)}\Big(\frac{x_k}{Y_k}-\frac{x_l}{Y_l}\Big)^2,
\end{align*}
\noindent which is positive, providing that the $x_{1\leq k<n_s}$ are not all zero since $\partial_kr\geq0$, so ${\bf x}^\top\boldsymbol{\cal H}_\zeta{\bf x}>0$ and we conclude that $\zeta(\Yb,\tau,e_r,e_s,{\bf e}_v)$ is strictly convex. \qed

\begin{acknowledgements}
Part of this work was funded under the Onera research project JEROBOAM. One of the authors (F. Renac) gratefully acknowledges this support.
\end{acknowledgements}

% BibTeX users please use one of
%\bibliographystyle{spbasic}      % basic style, author-year citations
\bibliographystyle{spmpsci}      % mathematics and physical sciences
%\bibliographystyle{spphys}       % APS-like style for physics
%\bibliography{../../../BIBLIO_BIB/biblio_generale}
\bibliography{biblio_generale}

\end{document}